\mathchardef\mhyphen="2D
\newcommand{\R}{\mathbb{R}}
\newcommand{\Z}{\mathbb{Z}}
\newcommand{\Q}{\mathbb{Q}}
\newcommand{\dynN}{\mathcal{N}}
\newcommand{\N}{\mathcal{N}}
\newcommand{\head}{\operatorname{head}}
\newcommand{\tail}{\operatorname{tail}}
\newcommand{\rep}{\operatorname{rep}}
\newcommand{\TENS}[2]{\dynN^{[#1,#2]}}
\newcommand{\TEN}[1]{\TENS{1}{#1}}
\DeclareMathOperator*{\argmin}{arg\,min}
\newcommand{\lift}{\operatorname{lift}}
\newcommand{\low}{\operatorname{low}}
\newcommand{\high}{\operatorname{high}}
\DeclareMathOperator*{\argmax}{arg\,max}
\newcommand{\start}{\operatorname{start}}
\newcommand{\cut}{\operatorname{cut}}
\renewcommand{\phi}{\varphi}
\DeclareRobustCommand{\cev}[1]{%
  {\mathpalette\do@cev{#1}}%
}
\newcommand{\do@cev}[2]{%
  \vbox{\offinterlineskip
    \sbox\z@{$\m@th#1 x$}%
    \ialign{##\cr
      \hidewidth\reflectbox{$\m@th#1\vec{}\mkern4mu$}\hidewidth\cr
      \noalign{\kern-\ht\z@}
      $\m@th#1#2$\cr
    }%
  }%
}
\newtheorem{theorem}{Theorem}
\newtheorem{definition}{Definition}
\newtheorem{lemma}[theorem]{Lemma}
\newtheorem{obs}{Observation}
\theoremstyle{definition}
\newtheorem{rem}{Remark}
\begin{document}

\title{Minimum-Cost Flows Over Time}
\date{}

\author[1]{Miriam Schl\"oter}
\author[1]{Robert Weismantel}

\affil[1]{Department of Mathematics, Institute for Operations Research, ETH Z\"urich, Z\"urich, Switzerland}
\maketitle
\begin{abstract}
In this paper we show that every maximum minimum-cost flow over time problem has an optimal solution with a repeated structure if the given time horizon is large enough.
\end{abstract}
\thispagestyle{empty}
\setcounter{page}{1}
\section{Introduction}

Time is a critical resource in many network routing problems arising in road, pedestrian, rail, or air
traffic control, including evacuation planning as one important example~\cite{HamacherTjandra02a}. Network
flows over time capture the essence of these applications as they model the variation of flow along arcs
over time as well as the delay experienced by flow traveling at a given pace through the network. Other
application areas include, for instance, production systems, communication networks, and financial
flows~\cite{Aronson89,PowellJO95}. 

\paragraph{Maximum Flows Over Time.}
In the following $\Z_+, \Q_+, \R_+$ denote all $x$ in $\Z, \Q$ or  $\R$ with $x \geq 0$.
The study of flows over time goes back to the work of Ford and Fulkerson~\cite{Ford1958,Ford1962}. 
A \emph{flow over time network} $\dynN = (D,s,t,u,\tau)$ consists of a directed graph $D = (V,A)$ with integral \emph{arc capacities}~$u\in \Z_+^A$, integral \emph{arc transit times}~$\tau\in \Z_+^A$, a \emph{source node} $s \in V$, and a \emph{sink node} $t \in V$.
The transit time of an arc specifies how long flow needs to travel from the tail of an arc to its head.
A flow over time $f$ in $\dynN$ is a function $f:A\times \Z_{+} \rightarrow \Z_+$ that specifies a flow value $f(a,\theta)$ for each arc $a \in A$ and every point in time $\theta \in \{1,2,\ldots,\infty\}$.
Throughout this paper we consider flows over time in this \emph{discrete} time model. Another common model is the \emph{continuous} time model where a flow over time is a Lebesque integrable function $f:A \times [0,\infty) \rightarrow \R_+$.
However, our main results also holds for the continuous time model.
A flow over time respects the capacity on each arc at every point in time, and flow conservation on the \emph{intermediate nodes} $V \setminus \{s,t\}$.
That is, flow that enters some intermediate node $v$ has to leave it eventually after potentially waiting for some time at $v$.
We say that a flow over time $f$ in $\dynN$ has \emph{time horizon} $\theta \in \Z_+$ if no arc carries flow after time $\theta$.
For a more in-depth introduction to flows over time see~\cite{Skutella-Intro09}.
Given a flow over time network $\dynN$, Ford and Fulkerson consider the \emph{Maximum Flow Over Time
Problem}, that is, to send the maximum possible amount of flow from~$s$ to~$t$ within the given time horizon~$\theta\in\Z_{+}$. 
Ford and Fulkerson observe that this problem can be reduced to a static maximum flow problem in an
exponentially large \emph{time-expanded network} $\dynN^{[1,\theta]}$, whose node set consists of~$\theta$ copies of the given node set~$V$ that we call \emph{layers}.
In particular, they show that a static flow $x$ in the time-expanded network $\TEN{\theta}$ corresponds to a flow over time $f$ with time horizon $\theta$ and vice-versa.
More importantly, Ford and Fulkerson~\cite{Ford1958,Ford1962} show that an $s$-$t$ path decomposition of static min-cost $s$-$t$-flow with arc transit times as costs in the given network $\dynN$ yields a maximum flow over time by
repeatedly sending flow along these $s$-$t$-paths at the corresponding flow rates. 
The resulting maximum $s$-$t$ flow over time has the property that the flow value on each arc is constant almost always, provided that the time horizon is large enough.
We call a flow over time with this property \emph{repeated}.

%
%
%
\paragraph{Minimum-Cost Flows Over Time.} Given a flow over time network $\dynN$, costs $c \in \Z^{A}$ on the arcs, a value $v \in \Q_+$, and a time-horizon $\theta \in \Z_+$, the corresponding \emph{minimum-cost $s$-$t$ flow over time with value $v$}, is a flow over time with value $v$ and time-horizon $\theta$ of minimal cost. 
In contrast to the classical minimum-cost flow problem, the minimum-cost flow over time problem is already NP-hard~\cite{Klinz95}.
Fleischer and Skutella~\cite{Fleischer2007} show that there always exists a minimum-cost flow over time where flow never waits at an intermediate node, even for a setting with multiple sources and sinks with given supplies and demands, respectively.
Fleischer and Skutella~\cite{Fleischer2007} also introduce a condensed variant of the time-expanded network of polynomial size that leads to a fully polynomial-time approximation scheme for the minimum-cost flow over time problem.
For the special case where the arc costs are proportional to the transit times, Fleischer and Skutella~\cite{Fleischer2007} describe a simple fully polynomial-time approximation scheme for the minimum-cost $s$-$t$ flow over time problem  based on capacity scaling that does not rely on any form of time-expansion.
\paragraph{Our Contribution.}
One structural question about minimum-cost flow over time problems that so far was unanswered is whether every minimum-cost flow over time problem has an optimal solution that is \emph{repeated}.
The main result of this paper is a positive answer to this question for the maximum minimum-cost flow over time problem.
\begin{theorem}\label{thm:main}
	Every maximum minimum-cost flow over time problem corresponding to a flow over time network $\dynN$, costs $c \in \Z^{A}$, and a time-horizon $\theta \in \Z_+$ has an optimal solution that is repeated provided that $\theta > 2(\theta^1_{j_2} + J(\theta^1_{j_2}) + J(\theta^1_{j_2} + J(\theta^1_{j_2}))$ with $\theta^1_{j_2}$ defined as in~\eqref{eq:height_non_repeated} and $J(\cdot)$ defined as in~\eqref{eq:def_bound}.
\end{theorem}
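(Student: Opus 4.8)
The plan is to start from an arbitrary optimal solution and transform it, by a sequence of surgeries on the time-expanded network, into a repeated optimal solution with the same value and the same cost. By the reduction of Ford and Fulkerson we may work with the corresponding static $s$-$t$ flow $x^{*}$ in $\TEN{\theta}$, and by the theorem of Fleischer and Skutella we may assume that $x^{*}$ carries no waiting flow at intermediate nodes, so that $x^{*}$ admits a path decomposition in which every path of $\TEN{\theta}$ projects onto a simple walk in $D$ equipped with a fixed departure time.

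The first ingredient is a finite encoding of the temporal shape of $x^{*}$. For each time step $i$ let $\sigma_i$ be the \emph{window state} at $i$, namely the restriction of $x^{*}$ to all arc copies whose transit interval contains $i$, i.e.\ to a window of $\tau_{\max}+1$ consecutive layers. Since a copy of arc $a$ carries at most $u(a)$ units, the number of possible window states is bounded by a quantity depending only on $u$ and $\tau$, and not on $\theta$ --- this is the flavour of bound that the function $J(\cdot)$ in the statement makes explicit. A flow over time is repeated precisely when its window-state sequence $(\sigma_i)$ is constant on a cofinite set of time steps, so the goal is to make $(\sigma_i)$ eventually constant.

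The crux is an excise/duplicate surgery. If a window state $\sigma$ occurs at two times $i<j$ of the solution, then the part of $x^{*}$ supported strictly between them behaves --- up to boundary effects that cancel because the two endpoint states coincide --- like a self-contained block that may be excised (effectively shortening the horizon) or duplicated (lengthening it); each such operation changes the value by exactly the number of inserted or removed time steps times a fixed rate, and changes the cost by that same count times a fixed cost rate. If $\theta$ exceeds the stated bound, a counting (pigeonhole) argument forces some state $\sigma^{*}$ to recur many times and to be the \emph{steady state}: the one realising the asymptotic slope of the maximum-flow-over-time value function and, among those, the smallest cost rate. Applying the excise/duplicate move to drive the interior to $\sigma^{*}$, and then applying it again to the configuration so obtained --- whose transients are now short, which is exactly why the bound nests applications of $J$, the second round acting on a configuration of size $\theta^1_{j_2}+J(\theta^1_{j_2})$ rather than $\theta^1_{j_2}$ --- yields an optimal solution whose window state equals $\sigma^{*}$ on a long middle interval and which has transients of total length at most $2\theta^1_{j_2}$.

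It remains to observe that the two transients are themselves forced to be canonical, so that the solution that ramps up to $\sigma^{*}$, repeats $\sigma^{*}$, and ramps down is repeated in the required sense: any time step wasted in ramp-up or ramp-down could be traded for one more copy of $\sigma^{*}$, which either strictly increases the value (contradicting maximality) or, at equal value, strictly decreases the cost (contradicting minimality of cost). The step I expect to be the main obstacle is the surgery itself: one must verify flow conservation across the two seams created by an excision or a duplication --- delicate because arcs straddle several layers, so the cut is never clean --- and show that value and cost change by exactly the predicted linear amounts with no error term. Controlling this is precisely what forces the inequality $\theta>2(\theta^1_{j_2}+J(\theta^1_{j_2})+J(\theta^1_{j_2}+J(\theta^1_{j_2})))$, since one needs, simultaneously and at both ends, enough room for the bounded transients and enough repeated middle to carry the duplicated blocks used to trade value against cost; a secondary subtlety is showing that $\sigma^{*}$ is essentially unique, where ties in value rate are broken by cost rate and any remaining ties by the constant terms, which again only matters below the stated threshold.
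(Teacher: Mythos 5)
Your route --- encode the solution by ``window states,'' apply pigeonhole, and perform excise/duplicate surgery between recurrences of the same state --- is genuinely different from the paper's, which never manipulates an arbitrary optimal solution at all: it starts from the Ford--Fulkerson temporally repeated maximum flow $f_0$ (repeated by construction) and cancels negative-cost cycles in $\TEN{\theta}_{f_0}$, using a projection/lifting correspondence with $\dynN_{\phi(f)}$ to select cancelling cycles whose augmentation provably preserves repetition while eroding the repeated interval by at most $J(\cdot)$ per phase. Unfortunately your approach has a quantitative gap that is fatal for the theorem \emph{as stated}: the number of window states is of order $\prod_{a\in A}(u(a)+1)$ raised to the window width, i.e.\ exponential in the instance, so your pigeonhole only forces recurrence --- and hence only bounds the transients --- once $\theta$ exceeds an exponential threshold. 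The theorem asserts the conclusion already for $\theta>2(\theta^1_{j_2}+J(\theta^1_{j_2})+J(\theta^1_{j_2}+J(\theta^1_{j_2}))$ with $J$ the explicit expression in \eqref{eq:def_bound}, which is polynomial in $\sum_a\tau(a)$, $\sum_a|c(a)|u(a)$ and $|V|$; a completed version of your argument would prove a strictly weaker statement with a much larger threshold.

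There is also a structural gap in the exchange argument. Excising the block between two occurrences of the same window state at times $i<j$ yields a feasible flow of horizon $\theta-(j-i)$ whose value and cost contributions are whatever that particular block carries; they are \emph{not} ``$(j-i)$ times a fixed rate'' unless the block is already a repetition of a single state, which is the conclusion you are after. To keep the horizon fixed you must simultaneously excise a length-$\ell$ block in the transient and insert a length-$\ell$ block of copies of $\sigma^{*}$ in the middle, with all seams matching; the claimed improvement then requires (i) that the transient contains an excisable block at all, (ii) that the one-step block of $\sigma^{*}$ dominates \emph{every} excisable block per unit length in the lexicographic (value, cost) order, and (iii) that such a dominating $\sigma^{*}$ exists and is compatible with the boundary behaviour near times $1$ and $\theta$. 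Point (ii) is a minimum-mean/ratio-cycle statement over an exponentially large state-transition graph and is asserted rather than proved; without it an optimal solution could in principle alternate forever among several incomparable recurring states, and nothing in the proposal rules this out.
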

Throughout the whole paper we assume that the given time-horizon $\theta$ fulfills
\begin{align}\label{eq:bound_on_TH}
    \theta > 2(\theta^1_{j_2} + J(\theta^1_{j_2}) + J(\theta^1_{j_2} + J(\theta^1_{j_2})).
\end{align}
%
%

To prove Theorem~\ref{thm:main} it suffices to consider the case where $v$ is the value of a maximum flow over time with time-horizon $\theta$. 
The proof for smaller values of $v$ follows directly from this special case.
Our proof of Theorem~\ref{thm:main} is constructive.
Our of departure is the existence of a repeated maximum flow over time $f_0$ with time horizon $\theta$ computed by the algorithm of Ford and Fulkerson~\cite{Ford1962}.

\emph{Using the correspondence shown by Ford and Fulkerson~\cite{Ford1962} we in the following interpret every flow over time as a flow in the time-expanded network $\TEN{\theta}$. We further just consider networks with $\tau \in \{0,1\}^A$. This is without loss of generality by subdividing arcs $a$ with $\tau(a)>1$ into multiple arcs unit transit time by inserting additional nodes.}

To eventually obtain a maximum flow over time with time horizon $\theta$ of minimum cost, we iteratively augment along cycles with negative cost in the current residual network of $\TEN{\theta}_f$ in such a way that after every augmentation the resulting flow over time is still repeated.
Let $f$ be a repeated maximum $s$-$t$ flow over time with time horizon $\theta$. 
The following two main ingredients are central throughout our construction:
\begin{enumerate}
    \item The residual network $\TEN{\theta}_f$ corresponding to $f$ has a repeated structure: If $f$ is repeated during the interval $[\theta_1,\theta_2]$, then there is the same set of arcs between layers $t$ and $t+1$ for all $t \in \{\theta_1,\ldots,\theta_2-1\}$ (see Observation~\ref{obs:layers_are_the_same}).
    Additionally $f$ induces a (static) maximum $s$-$t$ flow $\phi(f)$ in $\dynN$ (see~\eqref{eq:def_of_flow_projection} for the definition of $\phi(f)$, and Lemma~\ref{lem:phif_is_feasible} and Lemma~\ref{lem:maxflow_induces_max_flow}).
    \item There is a correspondence between negative cost cycles in the repeated part of $\TEN{\theta}_f$ and certain negative cost cycles in $\dynN_{\phi(f)}$ via a Projection and a Lifting Lemma (see Lemma~\ref{lem:projection_lemma} and Lemma~\ref{lem:lifing_lemma}).
    %
    %
\end{enumerate}
Overall, the correspondence induced by the projection and the lifting lemma, together with the fact that $\TEN{\theta}_{f_0}$ has a repeated structure allows us to iteratively choose specific negative cost cycles in the current residual network $\dynN_{\phi(f)}$ and to lift them to collections of negative cost cycles in the repeated part of $\TEN{\theta}_f$ such that augmenting along these collections results in a repeated flow over time.
This first part of our construction results in a flow over time $f$ with time-horizon $\theta$ with the property that the repeated part of $\TEN{\theta}_f$ does not contain any cycle of negative cost and such that $\dynN_{\phi(f)}$ has some useful structural properties, see Lemma~\ref{Cor:no_neg_transit_time_cycle} and Lemma~\ref{lem:properties}
 These properties allow us to analyze the second part of our construction where we choose a specific order of augmentation steps along the remaining cycles of negative cost in $\TEN{\theta}_f$.
Such cycles $C$ can be of different forms.
In the second step we only augment along negative cost cycles $C$ in $\TEN{\theta}_f$ that do not intersect with the repeated part of $\TEN{\theta}_f$ and \emph{both} non-repeated parts of $\TEN{\theta}_f$.
See Figure~\ref{fig:cycles1} for examples of such cycles.
\begin{figure}
    \centering
    \begin{subfigure}{0.4\textwidth}
    \begin{center}
            \includegraphics{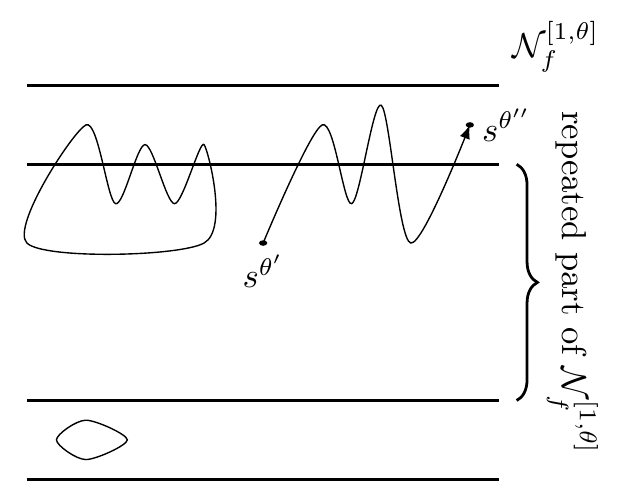}
    \end{center}
    \caption{Cycles along which we augment in the second step of our construction}
\label{fig:cycles1}
    \end{subfigure} 
    \hfill
\begin{subfigure}{0.5\textwidth}
    \begin{center}
            \includegraphics{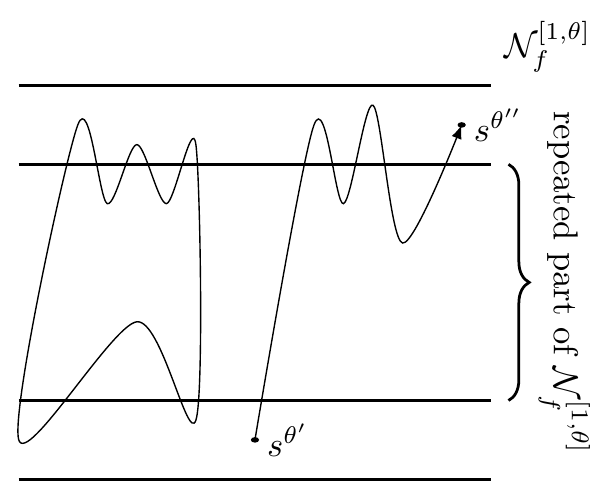}
    \end{center}
    \caption{Cycles crossing all three ``areas'' of $\TEN{\theta}_f$\\ \hfill}
\label{fig:cycles2}
\end{subfigure}
\caption{Examples of cycles in $\TEN{\theta}_f$. The paths shown in both figures become cycles by connecting them to the super source of the network. We omitted this connection for the sake of visibility.}
\end{figure}
We prove that such a cycle $C$ can only intersect with a bounded number of layers of the repeated part of $\TEN{\theta}_f$.
To deduce this fact, we exploit that the projection of the part of $C$ that lies in the repeated part of $\TEN{\theta}_f$ has a special structure due to the structural properties of $\dynN_{\phi(f)}$.  
For the other possible cases depicted in Figure~\ref{fig:cycles2} we develop similar arguments to deduce that they cannot occur.
%
%
%
By special additional augmentations that we perform in the second part of our construction we make sure that the cycles along which we augment cannot ``stack on top of each other'' to eventually result in a non-repeated flow over time.
Overall, this implies that our construction terminates with a maximum flow over time of minimum cost that is repeated if the time-horizon $\theta$ is large enough.

\section{Preliminaries}
%
Let $D = (V,A)$ be a directed graph. 
A sequence $W = (v_1,a_1,v_2,a_2,\ldots,v_{k-1},a_{k-1},v_k)$ with $a_i = (v_i,v_{i+1}) \in A$ for all $i \in \{1,\ldots,k-1\}$ and $v_i \in V$ for all $i \in \{1,\ldots,k\}$ is called a $\emph{walk}$ in $D$.
The walk $W$ is \emph{closed} if $v_1 = v_k$.
A walk $W$ is called a \emph{path} if we additionally have $v_i \neq v_j$ for all $1\leq i<j\leq k$.
In this case, $W$ is also called an $v_1$-$v_k$ path.
If $W$ is a path and $v_1 = v_k$, then $W$ is called a \emph{cycle}.
The proof of the following lemma is straight forward.
\begin{lemma}\label{lem:decomposition_of_walk}
    Let $D = (V,A)$ be a directed graph and let $W = (v_1,a_1,\ldots,a_{k-1},v_k)$ be a walk in $D$. 
    If $W$ is a closed, then $W$ can be decomposed into disjoints sets $B_1,\ldots, B_\ell$ such that $B_i$ is a cycle for all $i \in \{1,\ldots,\ell\}$.
    If $W$ is a non-closed walk, then $W$ can be decomposed into a $v_1$-$v_k$ path $P$ and cycles $B_1,\ldots, B_\ell$.
\end{lemma}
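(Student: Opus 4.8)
The plan is to prove Lemma~\ref{lem:decomposition_of_walk} by induction on the length $k$ of the walk, peeling off one cycle at a time whenever the walk revisits a vertex. First I would observe that if $W=(v_1,a_1,\ldots,a_{k-1},v_k)$ has all vertices distinct (among $v_1,\ldots,v_{k-1}$, and with $v_k$ possibly equal to $v_1$ in the closed case), then $W$ is already either a path (non-closed case) or a cycle (closed case), and we are done with $\ell=0$. Otherwise there exist indices $1\le i<j\le k$ with $v_i=v_j$; choose such a pair with $j-i$ minimal, so that the sub-walk $B=(v_i,a_i,v_{i+1},\ldots,a_{j-1},v_j)$ has no repeated vertex other than its coinciding endpoints, i.e.\ $B$ is a cycle.

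Next I would excise $B$ from $W$: define $W'=(v_1,a_1,\ldots,a_{i-1},v_i=v_j,a_j,v_{j+1},\ldots,a_{k-1},v_k)$, which is again a walk in $D$ (the concatenation is valid because the removed block started and ended at the same vertex), has the same first and last vertex as $W$, and is strictly shorter. Moreover the arc set of $W'$ together with the arcs of $B$ is exactly the arc set of $W$, and $B$ is disjoint from $W'$ as an arc set since $B$ consists of the arcs $a_i,\ldots,a_{j-1}$ which no longer appear in $W'$. (If one prefers the statement to be about edge-disjoint decompositions of the arc multiset, the same bookkeeping works verbatim.) Apply the induction hypothesis to $W'$: in the closed case $W'$ decomposes into cycles $B_1,\ldots,B_m$, and then $B_1,\ldots,B_m,B$ is the desired decomposition of $W$; in the non-closed case $W'$ decomposes into a $v_1$-$v_k$ path $P$ and cycles $B_1,\ldots,B_m$, and then $P,B_1,\ldots,B_m,B$ works for $W$. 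The base case $k=1$ (a single vertex, no arcs) is trivial.

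There is no serious obstacle here; the only point requiring a little care is the precise meaning of ``disjoint'': one should decide whether the $B_i$ (and $P$) are required to be pairwise arc-disjoint as subgraphs, or whether the decomposition is a partition of the arc multiset of $W$ (which matters when $W$ repeats an arc). The cleanest formulation is the latter — the multiset of arcs of $W$ is partitioned into the arc sets of $P$ (if present) and of the cycles $B_i$ — and the excision argument above delivers exactly this, since each induction step removes a contiguous block of arcs and reattaches the rest. I would state the lemma with this reading in mind and note that vertex-disjointness is not claimed (and indeed cannot be, as cycles in the decomposition may share vertices).
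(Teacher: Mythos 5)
Your cycle-peeling induction is correct and is exactly the standard "straightforward" argument the paper alludes to without writing out (the paper gives no proof, only the remark that one is straightforward). The only cosmetic slip is that in the closed base case the walk itself is a single cycle, so one should say $\ell=1$ rather than $\ell=0$; everything else, including your careful reading of "disjoint" as a partition of the arc multiset, is fine.
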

Let $W_1 = (v_1,a_1,\ldots,a_{k-1},v_k)$ and $W_2 = (u_1,b_1,\ldots,b_{\ell-1},u_\ell)$ be two walks in $D$ with $v_k = u_1$. 
By $W_1 \oplus W_2$ we denote the concatenation of both walks.

Let $P = (v_1,a_1,v_2,a_2,\ldots,v_{k-1},a_{k-1},v_k)$ be a path in $D$.
The \emph{subpath} $P[v_i,v_{i'}]$ with $1 \leq i < i' \leq v_k$ is defined by $P[v_i,v_{i'}] = (v_i,a_i,\ldots,v_{i'-1},a_{i'-1},v_{i'})$

We say that the directed graph $D$ is \emph{Eulerian} if for every vertex of $D$ in-degree and out-degree are equal.
The following fact is well-known.
\begin{lemma}\label{lem:Eulerian_Subgraph_Decomp}
    Let $D = (V,A)$ be an Eulerian graph. 
    Then $A$ can be decomposed into the arc sets of pairwise disjoint cycles in $D$.
\end{lemma}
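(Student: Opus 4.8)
The plan is to argue by induction on $|A|$, the number of arcs. If $A = \emptyset$ the empty collection of cycles works, so assume $A \neq \emptyset$; here ``pairwise disjoint'' of course means that the arc sets of the cycles are disjoint and partition $A$.

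First I would extract a single closed walk that uses no arc twice. Since $A \neq \emptyset$ and, by the Eulerian hypothesis, in-degree equals out-degree at every vertex, there is a vertex $v_1$ with out-degree at least one. Starting from $v_1$, I grow a walk $W = (v_1, a_1, v_2, a_2, \ldots)$ greedily: at the current vertex I leave along any arc not already used by $W$, stopping as soon as no such arc is available. As $A$ is finite this terminates, say at a vertex $v_k$ all of whose out-arcs are already used by $W$. The claim is that $v_k = v_1$. Indeed, along a walk every intermediate occurrence of a vertex $v$ contributes exactly one used in-arc and one used out-arc at $v$; hence for every vertex $v \notin \{v_1, v_k\}$ the walk uses equally many in- and out-arcs at $v$, and if $v_k \neq v_1$ the walk would use exactly one more in-arc than out-arc at $v_k$. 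But all out-arcs of $v_k$ are used by $W$, so the number of used out-arcs at $v_k$ equals its out-degree, which equals its in-degree; thus $W$ would use strictly more in-arcs at $v_k$ than there exist, a contradiction. Therefore $v_k = v_1$, and $W$ is a closed walk that, by construction, uses each arc at most once.

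Next I would apply Lemma~\ref{lem:decomposition_of_walk} to the closed walk $W$ to decompose its arc set into arc-disjoint cycles $B_1, \ldots, B_\ell$, and then delete the arcs of $B_1 \cup \cdots \cup B_\ell$ (equivalently, the arcs used by $W$) from $D$ to obtain a graph $D'$. Since $v_1$ has positive out-degree, $W$ contains at least one arc, so $|A(D')| < |A|$. Moreover, since in the closed walk $W$ every vertex is entered as often as it is left, $W$ uses the same number of in-arcs and out-arcs at each vertex $v$, so deleting these arcs lowers the in-degree and the out-degree of $v$ by the same amount; hence $D'$ is again Eulerian. By the induction hypothesis $A(D')$ decomposes into arc-disjoint cycles, and adjoining $B_1, \ldots, B_\ell$ yields the desired decomposition of $A$.

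The one place that needs care is the extraction step, namely showing that a maximal arc-simple walk starting at a vertex of positive out-degree must close up on itself; the degree-counting argument above is the standard way to see this. Everything else — invoking Lemma~\ref{lem:decomposition_of_walk} and checking that deleting the arcs of a closed walk preserves the Eulerian property while strictly reducing the arc count — is routine bookkeeping.
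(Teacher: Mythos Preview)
Your proof is correct. The paper, however, does not actually supply a proof of this lemma: it is introduced with ``The following fact is well-known'' and left unproved. So there is nothing to compare against; your argument is a standard and complete justification of the stated fact.

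One minor remark: you could shortcut the argument slightly by extracting a single cycle rather than a full closed walk. From any vertex of positive out-degree, follow outgoing arcs until some vertex repeats (which must happen by finiteness), yielding a cycle directly; remove its arcs and induct. This avoids the appeal to Lemma~\ref{lem:decomposition_of_walk}. Your route via a maximal arc-simple walk plus Lemma~\ref{lem:decomposition_of_walk} is of course also fine, just a touch more machinery than needed.
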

\paragraph{The Time-Expanded Network and Flows.}
One easy approach to solve many flow over time problems is to translate them back to classical static flow problems by using the \emph{time-expanded network} $\TEN{\theta}$ corresponding to a given flow over time network $\dynN$ and a time-horizon $\theta$.
The time-expanded network $\TEN{\theta} = (D^\theta = (V^\theta, A^\theta),u^\theta, \hat{s},\hat{t})$ consists of $\theta$ copies of the node set $V$, called \emph{layers}.
We denote the copy of node $v \in V$ in layer $\theta' \in \{1,\ldots,\theta\}$ by $v^{\theta'}$.
For each arc $(u,v) = a  \in A$ the time-expanded network contains $\theta-1$ copies $a^{\theta'} = (u^{\theta'}, v^{\theta'+1})$ for all $\theta' \in \{1,\ldots,\theta-1\}$.
%
%
Additionally, the time-expanded network also contains a super-source $\hat{s}$ that is connected to $s^{\theta'}$ for all $\theta' \in \{1,\ldots,\theta\}$ and a super-sink $\hat{t}$ that is connected to $t^{\theta'}$ for all $\theta' \in \{1,\ldots,\theta\}$, i.e., $V^\theta = \{v^{\theta'}: v \in V \text{ and } \theta' \in \{1,\ldots,\theta\}\} \cup \{\hat{s},\hat{t}\}$, 
and
 \begin{align*}
 	 	A^\theta = &\{a^{\theta'}: a \in A \text{ and } \theta' \in \{1,\ldots,\theta-1\}\} \cup\{(\hat{s},s^{\theta'}):\theta' \in \{1,\ldots,\theta\}\} \cup \{(t^{\theta'},\hat{t}):\theta' \in \{1,\ldots,\theta\} \}.
\end{align*}
The capacity function $u^\theta$ is defined as follows: All the arcs leaving $\hat{s}$ or entering $\hat{t}$ have infinite capacity, while each copy $a^{\theta'}$ with $\theta' \in \{1,\ldots,\theta-1\}$ of an arc $a \in A$ gets the same capacity as $a$, i.e. $u^\theta(a^{\theta'}) := u(a)$ for all $\theta' \in \{0,\ldots,\theta-1\}$.
If we are additionally given costs $c \in \Z^A$, then the corresponding cost $c^\theta$ function in the time-expanded network is defined by given all the arcs leaving $\hat{s}$ or entering $\hat{t}$ have zero cost, while each copy $a^{\theta'}$ with $\theta' \in \{1,\ldots,\theta-1\}$ of an arc $a \in A$ gets the same cost as $a$, i.e. $c^\theta(a^{\theta'}) := c(a)$ for all $\theta' \in \{0,\ldots,\theta-1\}$.
A flow over time $f$ with time-horizon $\theta \in \Z_+$ in a dynamic network $\dynN$ naturally induces a (static) flow in the corresponding time-expanded network $\TEN{\theta}$ and vice-versa.
\emph{Recall that, throughout this paper, whenever we speak of a flow over time $f$ in $\dynN$ with time horizon $\theta$, $f$ refers to the corresponding (static) flow in the time-expanded network.
}
Instead of ``$f$ is a flow over time in $\TEN{\theta}$ (or $\dynN$) with time horizon $\theta$'' we also just say ``$f$ is a flow over time in $\TEN{\theta}$''.
\emph{Also, whenever it is clear from context that we consider a time-expanded network, we write $u$ and $c$ instead of $u^\theta$ and $c^\theta$.}
%

Note that the time-expanded network as defined above only models flows over time where waiting at intermediate nodes is not allowed. 
To model waiting special holdover arcs connected adjacent copies of each node can be introduced.
For our purpose, constructing minimum-cost flows over time, it suffices to consider flows over time without waiting as Fleischer and Skutella showed that there always exists a minimum-cost flow over time without waiting~\cite{Fleischer2007}.

For $\theta_a, \theta_b \in \{1,\ldots,\theta\}$ with $\theta_a \leq \theta_b$ we denote by $\TENS{\theta_a}{\theta_b}$ the subnetwork of $\TEN{\theta}$ induced by the following set of nodes
\begin{align*}
    \{v^{\theta'}:v \in V \text{ and } \theta' \in \{\theta_a,\ldots, \theta_b\}\}\cup\{\hat{s},\hat{t}\},
\end{align*}
i.e., the arcs of $\TENS{\theta_a}{\theta_b}$ are the arcs of $\TEN{\theta}$ that are connected to $\hat{s}$ or $\hat{t}$ or lie between layers $\theta_a$ and $\theta_b$ of $\TEN{\theta}$.
We say that a flow over time $f$ is \emph{repeated during $[\theta_1,\theta_2]$}  with $\theta_1,\theta_2\in \Z_+$, $\theta_1 < \theta_2\leq \theta$, and $\theta_2-\theta_1 \geq 2$ if
$f(a^{\theta'}) = f(a^{\theta'+1})$ for all $a \in A$ and $\theta' \in \{\theta_1,\ldots,\theta_2-1\}$.

Let $\dynN = (D = (V,A),s,t,u,\tau)$ be a flow over time network.
Throughout the paper we often consider \emph{static} flows in $\dynN$ by just ignoring the transit times.

Let $\mathcal{D} = (D=(V,A),u, s,t)$ be some network (for example a flow over time network or a time-expanded network), with capacities $u$, a source $s$, a sink $t$, costs $c$ and potentially transit times $\tau$ on the arcs.
Given an arc $a = (u,v) \in A$, $\cev{a} = (v,u)$ denotes the corresponding \emph{backward arc}.
    We also define $\cev{A} = \{\cev{a}:a \in A\}$.    
    Given a flow $x$ in $\mathcal{D}$, we say that an arc $a \in A$ is \emph{fully congested} by $f$ if $f(a) = u(a)$.
     We define the \emph{residual network} corresponding to $x$ by $\mathcal{D}_x = (D_x = (V, A_x),s,t,u_x)$ as follows,
    \begin{align*}
        u_x(a):= u(a)-x(a) \text{ for all } a \in A \text{ and } u_x(\cev{a}) := x(a) \text{ for all } \cev{a} \in \cev{A} \text{ and }
        A_x := \{a \in A \cup \cev{A}:u_x(a)>0\}.
    \end{align*}
    The cost function $c$ is extended to $\mathcal{D}_x$ by defining $c_x(a) := c(a)$ for all $a \in A$ and $c_x(\cev{a}) = -c(a)$ for all $a \in \cev{A}$.
    The potential transit times are extended to the residual network in the same way. 
    \emph{Again, whenever the considered network is clear from contex, we write $u$, $c$ and $\tau$ instead of $u_x$, $c_x$ and $\tau_x$.}

    Let $S$ be a subnetwork of $\mathcal{D}_x$. 
    We denote the arcs of the subnetwork by $A(S)$ and the nodes in the subnetwork by $V(S)$.
    Further, the \emph{incidence vector} $\chi^S \in \{0,1\}^{A_x}$ of $S$ is given by
    \begin{align*}
        \chi^S(a) := 
        \begin{cases}
            1 \text{ if } a \in A(S)\\
            0 \text{ if } a \not \in A(S).
        \end{cases}
    \end{align*}
    Let $B$ be a cycle in $\mathcal{D}_x$. 
    We define the \emph{capacity} $u(B)$ of $B$ by $u(B) := \min\{u_x(a):a \in A(B) \}$.
    The capacity of a path $P$ in $\mathcal{D}_x$ is defined analogously.
    \emph{Augmenting $x$ along a cycle $B$} means adding $u_x(B) \cdot \chi^{B}$ to the flow $x$, i.e, the resulting static flow is $x + u_x(B) \cdot \chi^{B}$.

    Given $\mathcal{D}_x$, we denote by $\mathcal{D}_x^{u = 1}$ the network obtained from $\mathcal{D}_x$ as follows:
    For every arc $a \in A(\mathcal{D}_x)$, introduce $u(a)$ copies of $a$ with capacity $1$.
    Let $H$ be an Eulerian subgraph of $\dynN_x^{u = 1}$.
    Let $B_1,\ldots,B_k$ be a decomposition of $A(H)$ into pairwise arc-disjoint cycles, according to Lemma~\ref{lem:Eulerian_Subgraph_Decomp}. 
    By the expression \emph{augmenting $x$ along $H$}, we mean updating $x$ to
    $x + \sum_{i = 1}^k\chi^{B_i}$.




%
%
A central ingredient of our proof of Theorem~\ref{thm:main} is the fact that for a repeated flow over time $f$ with time horizon $\theta$ the corresponding residual network also has a repeated structure.
\begin{obs}\label{obs:layers_are_the_same}
    If $f$ is a flow over time in $\TEN{\theta}$ that is repeated during $[\theta_1,\theta_2]$, then the networks $\TENS{\theta_a}{\theta_a+1}_f$ and $\TENS{\theta_b}{\theta_b+1}_f$ are isomorphic for all $\theta_a,\theta_b \in \{\theta_1,\ldots,\theta_2-1\}$.    
\end{obs}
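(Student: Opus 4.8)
The plan is to unwind the definitions of the time-expanded residual network together with the repeated structure of $f$, and exhibit an explicit isomorphism. First I would fix $\theta_a,\theta_b \in \{\theta_1,\ldots,\theta_2-1\}$ and let $k = \theta_b - \theta_a$; the natural candidate map $\psi$ sends a node $v^{\theta'}$ in layer $\theta' \in \{\theta_a,\theta_a+1\}$ to $v^{\theta'+k}$, and fixes the super-source $\hat s$ and super-sink $\hat t$. On the arc level, $\psi$ sends a copy $a^{\theta'}$ of an original arc $a\in A$ (for $\theta' = \theta_a$) to $a^{\theta'+k}$, sends the reverse arc $\cev{a^{\theta'}}$ to $\cev{a^{\theta'+k}}$, and sends each super-source arc $(\hat s, s^{\theta'})$ to $(\hat s, s^{\theta'+k})$ and likewise for the sink side (and their reverses). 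I would first check that $\psi$ is a well-defined bijection between $V(\TENS{\theta_a}{\theta_a+1})$ and $V(\TENS{\theta_b}{\theta_b+1})$ and respects head/tail, which is immediate from the definition of $\TEN{\theta}$: the only arcs present between layers $\theta_a$ and $\theta_a+1$ are the copies $a^{\theta_a}$ for $a\in A$, plus the super-source/super-sink arcs incident to those two layers.

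The key step is to verify that $\psi$ maps the residual arc set $A(\TENS{\theta_a}{\theta_a+1}_f)$ onto $A(\TENS{\theta_b}{\theta_b+1}_f)$, i.e. that an arc is residual in the first subnetwork if and only if its image is residual in the second. Here I would invoke the repeatedness hypothesis: since $f$ is repeated during $[\theta_1,\theta_2]$ we have $f(a^{\theta'}) = f(a^{\theta'+1})$ for all $a\in A$ and $\theta'\in\{\theta_1,\ldots,\theta_2-1\}$, hence in particular $f(a^{\theta_a}) = f(a^{\theta_b})$ for every $a\in A$. Because the capacities satisfy $u^\theta(a^{\theta'}) = u(a)$ for every copy, the residual capacity $u_f(a^{\theta_a}) = u(a) - f(a^{\theta_a})$ equals $u_f(a^{\theta_b})$, and $u_f(\cev{a^{\theta_a}}) = f(a^{\theta_a}) = f(a^{\theta_b}) = u_f(\cev{a^{\theta_b}})$. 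Thus an original-arc copy (or its reverse) is present in $A_f$ between layers $\theta_a,\theta_a+1$ exactly when its image is present between $\theta_b,\theta_b+1$. The super-source and super-sink arcs have infinite capacity, so they (and their reverses, which carry the finite flow value along them) must be handled with a brief separate remark: the flow values $f(\hat s, s^{\theta'})$ on these arcs need not be repeated in general, so strictly speaking one either restricts attention to the part of the network between the two layers that does not include these connector arcs, or one observes that the statement as used downstream concerns precisely the internal-layer arcs; I would state the isomorphism for the induced subnetwork on the layer copies and note that this is what Observation~\ref{obs:layers_are_the_same} is invoked for.

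Finally I would note that $\psi$ also preserves costs, since $c^\theta(a^{\theta'}) = c(a)$ for all copies and $c_f(\cev a) = -c(a)$, so the isomorphism is in fact cost-preserving — a fact that will be needed when negative-cost cycles are transported between layers later in the paper, even though the statement as phrased only asks for a graph isomorphism. The main obstacle, such as it is, is purely bookkeeping: getting the index shift right at the boundary layers and deciding cleanly how to treat the $\hat s$- and $\hat t$-connector arcs, whose flow values are not governed by the repeatedness condition. Once that convention is fixed, the verification is a one-line consequence of $f(a^{\theta_a}) = f(a^{\theta_b})$ together with the layer-independence of $u^\theta$ and $c^\theta$.
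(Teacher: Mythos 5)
Your proof is correct and is exactly the verification the paper leaves implicit (the observation is stated without proof): the shift map $v^{\theta'}\mapsto v^{\theta'+(\theta_b-\theta_a)}$ preserves residual capacities because $f(a^{\theta_a})=f(a^{\theta_b})$ and $u^\theta$, $c^\theta$ are layer-independent. Your caveat about the $\hat s$- and $\hat t$-connector arcs, whose flow values are not constrained by repeatedness and which the paper's definition of $\TENS{\theta_a}{\theta_b}$ formally includes, is a fair catch, and your resolution (read the isomorphism as applying to the induced subnetwork on the two layer copies, which is how the observation is used downstream) is the right one.
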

Let $f$ be a flow over time in $\TEN{\theta}$ that is repeated during $[\theta_1,\theta_2]$.
For a walk $W$ in $\TENS{\theta_1}{\theta_2}_f$ we define its \emph{height} $h^{\theta}_f(W)$ as the number of layers in the time-expanded network that contribute to $W$.
More formally, 
\begin{align*}
    h^\theta_f(W) &:=(h^\theta_f)^+(W) -(h^\theta_f)^-(W) \text{ with }\\
   (h^\theta_f)^+(W) &:= \max\{\theta':\theta' \in \{\theta_1,\ldots,\theta_2\} \text{ and there exists } v \in V \text{ such that } v^{\theta'} \in V(W) \} \text{ and }\\
    (h^\theta_f)^-(W) &:= \min\{\theta':\theta' \in \{\theta_1,\ldots,\theta_2\} \text{ and there exists } v \in V \text{ such that } v^{\theta'} \in V(W) \}.
\end{align*}
See Figure~\ref{Fig:Height} for an illustration of the height of a walk in $\TENS{\theta_1}{\theta}_f$.
\begin{figure}[ht]
    \begin{center}
       \includegraphics{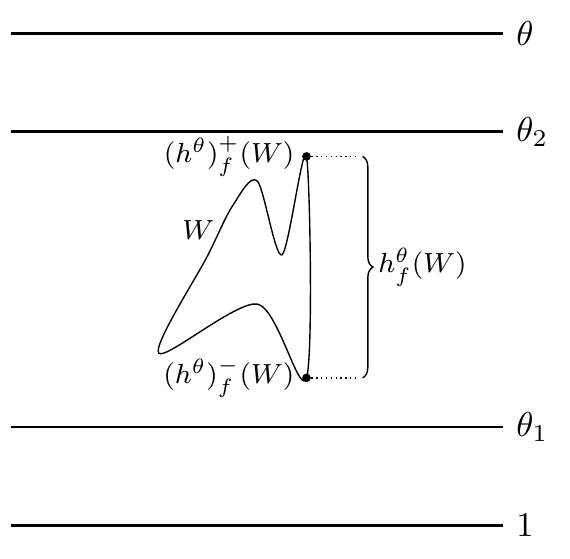}
    \end{center}
    \caption{The height of a walk}
    \label{Fig:Height}
\end{figure}

%
%
Similarly, we can also define the height of a walk $W$ in $\dynN$:
Let $x$ be an $s$-$t$ flow in $\dynN$. 
Let $W = (v_1,a_1,v_2,\ldots,a_{\ell-1},v_\ell)$ be a walk in $\dynN_x$.
For every $i \in \{1,\ldots,\ell\}$ we define the \emph{height of $v_i$ with respect to $W$ and $x$} by
\begin{align}\label{eq:height_of_node_wrt_path}
    h_x(v_i,W) = \sum_{j = 1}^{i-1} \tau(a_j),
\end{align}
and
\begin{align*}
    h_x(W) &= h_x^+(W) - h_x^-(W) \text{ with }\\
  h_x^-(W) &= \min_{i \in \{1,\ldots,\ell\}} h_x(v_i,W) \quad \text{and} \quad \low(W) = \argmin_{v \in V(W)} h_x(v,W) \\
  h_x^+(W) &= \max_{i \in \{1,\ldots,\ell\}} h_x(v_i,W) \quad \text{and} \quad \high(W) = \argmax_{v \in V(W)} h_x(v,W).
\end{align*}

Let $f$ be a flow over time that is repeated during $[\theta_1,\theta_2]$.
Via the projection $\phi$, the flow over time $f$ can be mapped to a static flow in $\dynN$ as follows,
\begin{align}\label{eq:def_of_flow_projection}
    \phi(f)(a) := f(a^{\theta_1}) \text{ for all } a \in A.
\end{align}
%

%
\begin{lemma}\label{lem:phif_is_feasible}
   Let $f$ be a flow over time in $\TEN{\theta}$ that is repeated during $[\theta_1,\theta_2]$.
   Then, $\varphi(f)$ is a feasible static flow in $\dynN$.
\end{lemma}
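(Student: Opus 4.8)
The plan is to verify directly that $\varphi(f)$, as defined by $\varphi(f)(a) := f(a^{\theta_1})$, satisfies the two defining conditions of a feasible static $s$-$t$ flow in $\dynN$: capacity constraints on every arc and flow conservation at every intermediate node. Both conditions will be inherited from the corresponding properties of the static flow $f$ in $\TEN{\theta}$, combined with the repetition hypothesis that $f(a^{\theta'}) = f(a^{\theta'+1})$ for all $a \in A$ and $\theta' \in \{\theta_1,\ldots,\theta_2-1\}$.

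First I would handle the capacity constraints, which are immediate: for each $a \in A$ we have $0 \le \varphi(f)(a) = f(a^{\theta_1}) \le u^\theta(a^{\theta_1}) = u(a)$, since $f$ respects capacities in the time-expanded network and $u^\theta(a^{\theta_1}) = u(a)$ by definition of the time-expanded capacities (note $\theta_1 \le \theta_2 - 2 < \theta$, so the copy $a^{\theta_1}$ exists). The substantive step is flow conservation. Fix an intermediate node $v \in V \setminus \{s,t\}$. I would pick an interior layer index $\theta'$ with $\theta_1 < \theta' \le \theta_2 - 1$, so that the copy $v^{\theta'}$ has, as its incoming arcs in $\TEN{\theta}$, exactly the copies $a^{\theta'-1}$ for $a \in \delta^{-}(v)$, and as its outgoing arcs exactly the copies $a^{\theta'}$ for $a \in \delta^{+}(v)$ (here $v$ is an intermediate node, hence not $s$ or $t$, so no super-source/super-sink arcs are incident to $v^{\theta'}$). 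Flow conservation of $f$ at $v^{\theta'}$ gives $\sum_{a \in \delta^{-}(v)} f(a^{\theta'-1}) = \sum_{a \in \delta^{+}(v)} f(a^{\theta'})$. Now the repetition property lets me rewrite each term: since $\theta_1 \le \theta'-1$ and $\theta' \le \theta_2-1$, we have $f(a^{\theta'-1}) = f(a^{\theta_1}) = \varphi(f)(a)$ and $f(a^{\theta'}) = f(a^{\theta_1}) = \varphi(f)(a)$ for all relevant $a$. Hence $\sum_{a \in \delta^{-}(v)} \varphi(f)(a) = \sum_{a \in \delta^{+}(v)} \varphi(f)(a)$, which is precisely flow conservation for $\varphi(f)$ at $v$.

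The one point requiring a little care is making sure the chosen layer $\theta'$ is genuinely interior, i.e. that such a $\theta'$ exists and that $v^{\theta'}$ sees a full ``unit transit'' neighborhood with no holdover or terminal arcs; this is where the standing assumptions $\theta_2 - \theta_1 \ge 2$, $\tau \in \{0,1\}^A$ (so all arc copies $a^{\theta'}$ go from layer $\theta'$ to layer $\theta'+1$), and the absence of holdover arcs in $\TEN{\theta}$ are used. I would also remark that $\varphi(f)$ is automatically a $\{0,1,\ldots\}$-valued vector of the right dimension, so ``feasible static flow'' only needs the capacity and conservation checks. I do not expect any real obstacle here; the lemma is essentially a bookkeeping consequence of Observation~\ref{obs:layers_are_the_same} and the definition of repetition, and the proof is short.

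\begin{proof}
We check that $\varphi(f)$ satisfies the capacity constraints and flow conservation at every intermediate node of $\dynN$.

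\emph{Capacities.} Since $\theta_1 \le \theta_2 - 2 < \theta$, the copy $a^{\theta_1}$ exists in $\TEN{\theta}$ for every $a \in A$, and $u^\theta(a^{\theta_1}) = u(a)$. As $f$ is a feasible static flow in $\TEN{\theta}$, we have $0 \le f(a^{\theta_1}) \le u^\theta(a^{\theta_1}) = u(a)$, i.e. $0 \le \varphi(f)(a) \le u(a)$.

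\emph{Flow conservation.} Let $v \in V \setminus \{s,t\}$ be an intermediate node. Since $\theta_2 - \theta_1 \ge 2$, there exists an index $\theta'$ with $\theta_1 < \theta' \le \theta_2 - 1$; fix such a $\theta'$. Because $\tau \in \{0,1\}^A$ and $\TEN{\theta}$ contains no holdover arcs, and because $v$ is neither $s$ nor $t$ (so no arcs to $\hat s$ or $\hat t$ are incident to $v^{\theta'}$), the arcs of $\TEN{\theta}$ entering $v^{\theta'}$ are exactly $\{a^{\theta'-1} : a \in \delta^{-}(v)\}$ and the arcs leaving $v^{\theta'}$ are exactly $\{a^{\theta'} : a \in \delta^{+}(v)\}$. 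Flow conservation of $f$ at $v^{\theta'}$ gives
\begin{equation*}
  \sum_{a \in \delta^{-}(v)} f(a^{\theta'-1}) \;=\; \sum_{a \in \delta^{+}(v)} f(a^{\theta'}).
\end{equation*}
Since $\theta_1 \le \theta'-1 \le \theta_2 - 1$ and $\theta_1 \le \theta' \le \theta_2 - 1$, the repetition of $f$ during $[\theta_1,\theta_2]$ yields $f(a^{\theta'-1}) = f(a^{\theta_1}) = \varphi(f)(a)$ for every $a \in \delta^{-}(v)$ and $f(a^{\theta'}) = f(a^{\theta_1}) = \varphi(f)(a)$ for every $a \in \delta^{+}(v)$. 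Substituting, we obtain
\begin{equation*}
  \sum_{a \in \delta^{-}(v)} \varphi(f)(a) \;=\; \sum_{a \in \delta^{+}(v)} \varphi(f)(a),
\end{equation*}
which is flow conservation for $\varphi(f)$ at $v$. Hence $\varphi(f)$ is a feasible static $s$-$t$ flow in $\dynN$.
\end{proof}
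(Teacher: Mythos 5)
Your proof is correct and follows essentially the same route as the paper: capacities are immediate from the definition of $u^\theta$, and flow conservation of $f$ at an interior copy $v^{\theta'}$ combined with the repetition property transfers conservation to $\varphi(f)$. One minor imprecision: for arcs $a$ with $\tau(a)=0$ the copy entering $v^{\theta'}$ is $a^{\theta'}$ rather than $a^{\theta'-1}$ (the paper splits $\delta^-(v)$ into $\delta_0^-$ and $\delta_1^-$ for exactly this reason), but since $f(a^{\theta'-1})=f(a^{\theta'})$ by repetition the identity you use still holds.
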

\begin{proof}
    First, we have $\phi(f)(a) = f(a^{\theta_1}) \leq u^\theta(a^{\theta_1}) = u(a) $ for all $a \in A$.
    Thus, $\phi(f)$ respects the capacities $u$.
    It remains to check that flow conservation holds in $\dynN$.
    Denote by $\delta^+_j(\dynN,v)$ respectively $\delta^-_j(\dynN,v)$ the set of all outgoing respectively incoming arcs of a node $v \in V$ with transit time $j$ with $j \in \{0,1\}$ in the network $\dynN$.
    Similarly $\delta^+(\TEN{\theta},v)$ and $\delta^-(\TEN{\theta},v)$ denote the sets of outgoing respectively incoming arcs of a node $v \in V^\theta$ in the time-expanded network $\TEN{\theta}$.
    \begin{align*}
        \sum_{a \in \delta^-(\dynN,v) } \phi(f)(a) &= \sum_{a \in \delta^-(\dynN,v)} f(a^{\theta_1}) 
        = \sum_{a \in \delta_0^-(\dynN,v)} f(a^{\theta_1}) + \sum_{a \in \delta_1^-(\dynN,v)} f(a^{\theta_1}) \\
        &= \sum_{a \in \delta_0^-(\dynN,v)} f(a^{\theta_1+1}) + \sum_{a \in \delta_1^-(\dynN,v)} f(a^{\theta_1})
        = \sum_{a' \in \delta^-(\TEN{\theta},v^{\theta_1+1})}f(a')\\
        &= \sum_{a' \in \delta^+(\TEN{\theta},v^{\theta_1+1})}f(a')
        = \sum_{a \in \delta^+(\dynN,v)}f(a^{\theta_1+1}) = \sum_{a \in \delta^+(\dynN,v)}f(a^{\theta_1}) = \sum_{a \in \delta^+(\dynN,v)} \phi(f)(a).
    \end{align*}
    In the third and second to last equation we use that $f$ is repeated while in the fifth equation we use that $f$ fulfills flow conservation at every non-terminal node of $\TEN{\theta}$.
\end{proof}
Our point of departure is the following important result of Ford and Fulkerson.
\begin{lemma}\label{lem:maxflow_induces_max_flow}
    If $f$ is a repeated maximum flow over time computed by the algorithm of Ford and Fulkerson~\cite{Ford1958,Ford1962} and $\theta > \sum_{a \in A}\tau(a)$, then
    \begin{itemize}
        \item $f$ is repeated during $[\theta_1,\theta_2]$ with $\theta_1 < \sum_{a \in A} \tau(a)$ and $\theta_2 > \theta- \sum_{a \in A} \tau(a) $,
        \item  $\phi(f)$ is a (static) maximum $s$-$t$ flow in $\dynN$, and
        \item there are no cycles $C$ with $\tau(C) < 0$ in $\dynN_{\phi(f)}$.
    \end{itemize}
\end{lemma}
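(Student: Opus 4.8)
The plan is to unfold the Ford--Fulkerson construction of a temporally repeated maximum flow over time and to read the three assertions off it; the genuinely new point is an explicit description of the layers on which $f$ is repeated, while the last two bullets are essentially a repackaging of classical facts from~\cite{Ford1958,Ford1962}. Recall that the algorithm first computes a static $s$-$t$ flow $x$ in $\dynN$ by repeatedly augmenting along a shortest $s$-$t$ path with respect to the transit times $\tau$ in the current residual network, terminating once $\dynN_x$ contains no residual $s$-$t$ path of $\tau$-length less than $\theta$; it then fixes a decomposition $x=\sum_{i=1}^{k}x_i\,\chi^{P_i}$ into simple $s$-$t$ paths $P_1,\dots,P_k$ (each of $\tau$-length at most $\theta-1$) and obtains $f$ by sending, for every $i$, $x_i$ units of flow into $P_i$ at every start time for which the flow can still reach $t$ within the horizon $\theta$. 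Translating this to $\TEN{\theta}$, for each $i$ and each arc $a\in P_i$ the copy $a^{\theta'}$ receives exactly $x_i$ units from $P_i$ precisely for $\theta'$ ranging over a contiguous block $I_i(a)=\{\ell_i(a),\dots,r_i(a)\}$, whose endpoints are determined by the prefix length $\tau(P_i[s,\tail(a)])$ and the suffix length $\tau(P_i[\head(a),t])$.

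From this description the first bullet, together with the identity $\phi(f)=x$, follows by bookkeeping. Since every $P_i$ is a simple $s$-$t$ path it uses each arc at most once, so $\tau(P_i[s,\tail(a)])\le\sum_{a'\in A}\tau(a')$ and $\tau(P_i[\head(a),t])\le\sum_{a'\in A}\tau(a')$; hence one checks that there are integers $\theta_1<\sum_{a'\in A}\tau(a')$ and $\theta_2>\theta-\sum_{a'\in A}\tau(a')$ with $\{\theta_1,\dots,\theta_2\}\subseteq I_i(a)$ for all $i$ and all $a\in P_i$ (this block is nonempty, and in fact very long, since $\theta$ satisfies~\eqref{eq:bound_on_TH}). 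Consequently, for every $a\in A$ and every layer $\theta'\in\{\theta_1,\dots,\theta_2\}$ we have $f(a^{\theta'})=\sum_{i:\,a\in P_i}x_i=x(a)$, independent of $\theta'$, so $f$ is repeated during $[\theta_1,\theta_2]$, and moreover $\phi(f)(a)=f(a^{\theta_1})=x(a)$, i.e.\ $\phi(f)=x$. By Lemma~\ref{lem:phif_is_feasible}, $\phi(f)$ is in particular a feasible static flow in $\dynN$. It therefore remains to prove the last two bullets for the static flow $x$ itself.

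For the second bullet, note that any simple $s$-$t$ path in the residual network $\dynN_x$ traverses each arc of $A$ at most once in the forward direction, while every backward residual arc has nonpositive transit time; so its $\tau$-length is at most $\sum_{a\in A}\tau(a)<\theta$. Thus, if $\dynN_x$ contained an $s$-$t$ path at all, it would contain one of $\tau$-length less than $\theta$, contradicting the termination rule of the algorithm. Hence $\dynN_x$ admits no $s$-$t$ path, i.e.\ $x$ is a maximum static $s$-$t$ flow in $\dynN$. For the third bullet, the flow $x$ produced by successive shortest-path augmentations is a flow of minimum cost $\tau\cdot x$ among all static $s$-$t$ flows of value $|x|$; by the classical negative-cycle optimality criterion for minimum-cost flows, this is equivalent to $\dynN_x=\dynN_{\phi(f)}$ containing no cycle $C$ with $\tau(C)<0$.

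The step I expect to be the main obstacle is the first bullet: making the interval description $I_i(a)$ precise in the discrete-time model, with the reduction to $\tau\in\{0,1\}^A$ already in force (so that some arc copies $a^{\theta'}$ stay within a single layer), and verifying that the \emph{startup} and \emph{shutdown} fringes of layers — on which some arc is not yet, or no longer, carrying its full repeated value — have width strictly below $\sum_{a\in A}\tau(a)$. This is the careful but routine accounting; once it is in place, everything else follows as above, and the same argument goes through verbatim in the continuous-time model, where the admissible start times of each path form a real interval instead of an integer one.
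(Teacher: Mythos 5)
Your proposal is correct and follows essentially the same route as the paper: both unfold the Ford--Fulkerson construction of the temporally repeated flow, identify $\phi(f)$ with the underlying static flow $x$, certify maximality of $x$ via the observation that any simple augmenting path has $\tau$-length at most $\sum_{a\in A}\tau(a)<\theta$, and obtain the third bullet from the minimum-cost optimality of $x$ (you phrase this via successive shortest paths and the negative-cycle criterion, the paper via the minimum-cost circulation in the network with the $-\theta$ return arc — two equivalent descriptions of the same computation). Your accounting for the first bullet is in fact more explicit than the paper's, which asserts the repeated interval without detailing the startup/shutdown fringes.
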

\begin{proof}
    $f$ is computed via the algorithm of Ford and Fulkerson that works as follows:
    Create a network $\dynN'$ by connecting $t$ to $s$ with an additional arc $a'$ of infinite capacity and `transit time' $-\theta$ and compute a minimum-cost circulation $x'$ in $\dynN'$ with the transit times as cost. 
    Clearly, $x'$ induces an $s$-$t$ flow $x$ in $\dynN$.
    Denote by $\mathcal{P}$ the set of all $s$-$t$ paths in $\dynN$
    Let $(x_P)_{P \in \mathcal{P}}$ be a path decomposition of $x$.
    A maximum flow over time is obtained by sending flow into each $P \in \mathcal{P}$ at rate $u_P$ during $\{1,\ldots,\theta-\tau(P)\}.$
    Thus, by construction of the maximum flow over time, the value of $\phi(f)$ is equal to the value of $x$.
    It remains to verify that $x$ is a maximum $s$-$t$ flow. 
    Suppose $x$ is not a maximum $s$-$t$ flow. 
    Then there is an augmenting $s$-$t$ path $P$ in $\dynN_x$ with $\tau_x(P) \leq \sum_{a \in A}\tau(a)$.
    Thus, $P \cup \{a'\}$ is a cycle in $\dynN'_{x'}$ with a strictly negative transit time, contradicting the fact that $x'$ is a minimum-cost circulation.
\end{proof}
Let $f$ be a flow over time in $\TEN{\theta}$ that is repeated during $[\theta_1,\theta_2]$.
A cycle $B$ in $\dynN_{\phi(f)}$ is called \emph{$v$ - reachable} for some $v \in V$ if there exist an $v$-$u$ path and an $u$-$v$ path in $\dynN_{\phi(f)}$ for some node $u \in V(B)$.

In our proof of Theorem~\ref{thm:main} we distinguish between negative cost cycles in the current residual network $\dynN_x$ that lie ``between'' two minimum $s$-$t$ cuts and negative cost cycles that are reachable from $s$ or from which $t$ can be reached.
To make this formal, let $x$ be a static maximum $s$--$t$ flow in $\dynN$.
By $\dynN_{x}(s)$,$\dynN_{x}(t)$, and $\overline{\dynN}_{x}$ we denote the subnetworks of $\dynN_{x}$ induced by the following sets of nodes, respectively.
\begin{align*}
    V_{x}(s) &= \{v: v \in V \text{ such that there exists an $s$-$v$}  \text{ path in } \dynN_{x} \}\\
    V_{x}(t) &= \{v: v \in V \text{ such that there exists an $v$-$t$} \text{ path in } \dynN_{x} \} \\
    \overline{V}_{x} &= V \setminus \{V_x(s) \cup V_x(t)\} = \{\text{nodes that lie \emph{between} minimum $s$-$t$ cuts in $\dynN$.}\}.
\end{align*}
Since $x$ is a maximum $s$-$t$ flow in $\dynN$ and thus all minimum $s$-$t$ cuts in $\dynN$ are completely saturated by $x$, we remark:
\begin{obs}\label{obs:disjoint}
    We have $V_x(s) \cap V_x(t) = \emptyset$ and a cycle $C$ in $\dynN_x$ either lies completely in $\dynN_x(s)$, $\overline{\dynN}_x$ or $\dynN_x(t)$.
\end{obs}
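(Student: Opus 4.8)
The statement to prove here is Observation~\ref{obs:disjoint}, which asserts two things: first, that $V_x(s) \cap V_x(t) = \emptyset$ for a static maximum $s$-$t$ flow $x$ in $\dynN$; and second, that every cycle $C$ in $\dynN_x$ lies entirely in one of the three subnetworks $\dynN_x(s)$, $\overline{\dynN}_x$, or $\dynN_x(t)$. The plan is to derive both claims from the max-flow/min-cut theorem and the basic structure of residual networks.

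\textbf{Disjointness of $V_x(s)$ and $V_x(t)$.} First I would argue by contradiction: suppose there is a node $w \in V_x(s) \cap V_x(t)$. Then there is an $s$-$w$ path and a $w$-$t$ path in $\dynN_x$, and concatenating them gives an $s$-$t$ walk in $\dynN_x$, hence an $s$-$t$ path in $\dynN_x$ (extract a path from the walk using Lemma~\ref{lem:decomposition_of_walk}). This means $x$ admits an augmenting path, contradicting the assumption that $x$ is a maximum $s$-$t$ flow. Equivalently, one can phrase this via min-cuts: $V_x(s)$ is (the node side of) a minimum $s$-$t$ cut, and by a symmetric argument $V \setminus V_x(t)$ is the source side of another minimum $s$-$t$ cut; since $x$ is maximum there are no residual arcs crossing either cut in the ``wrong'' direction, forcing $V_x(s) \subseteq V \setminus V_x(t)$, i.e.\ disjointness. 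I expect the path-concatenation argument to be the cleanest to write.

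\textbf{Every cycle lies in one piece.} Let $C$ be a cycle in $\dynN_x$. The key structural facts are: (i) there is no residual arc from $V_x(t)$ to $V_x(s)$, from $V_x(s) \cup \overline{V}_x$ into... — more carefully, since $V_x(s)$ is reachable-closed from $s$, there is no residual arc leaving $V_x(s)$ (any such arc would extend reachability), so no residual arc goes from $V_x(s)$ to $\overline{V}_x \cup V_x(t)$; and symmetrically, since from every node of $V_x(t)$ the sink is reachable, every residual arc \emph{into} $V_x(t)$ comes from $V_x(t)$ itself, i.e.\ there is no residual arc from $V_x(s) \cup \overline{V}_x$ into $V_x(t)$. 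Now traverse $C$: if $C$ meets $V_x(s)$, then since $C$ cannot leave $V_x(s)$ it stays in $V_x(s)$ entirely; if $C$ meets $V_x(t)$ but not $V_x(s)$, then since $C$ cannot enter $V_x(t)$ from outside and is a closed walk, it must lie entirely in $V_x(t)$; otherwise $C$ lies in $\overline{V}_x$. This case analysis, using the ``no residual arc leaves $V_x(s)$'' and ``no residual arc enters $V_x(t)$'' facts, gives the claim.

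\textbf{Main obstacle.} The statement is essentially folklore and the proof is short; the only mild care needed is justifying the two ``no residual arc'' facts cleanly. The fact that no residual arc leaves $V_x(s)$ is immediate from the definition of $V_x(s)$ as the set of nodes reachable from $s$ in $\dynN_x$ (such an arc would make its head reachable). The fact that no residual arc enters $V_x(t)$ requires noting that if $(a,b)$ is residual with $b \in V_x(t)$, then $b$-to-$t$ reachability plus this arc gives $a$-to-$t$ reachability, so $a \in V_x(t)$ as well. With these two facts in hand, both parts of the observation follow by elementary reachability arguments, so I do not anticipate a genuine difficulty — the write-up is a few lines.
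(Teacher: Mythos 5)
Your proof is correct and follows exactly the reasoning the paper relies on: the paper gives no explicit proof, only the one-line remark that all minimum $s$-$t$ cuts are saturated by the maximum flow $x$, and your reachability arguments (concatenating an $s$-$w$ and $w$-$t$ residual path to contradict maximality, plus the facts that no residual arc leaves $V_x(s)$ and none enters $V_x(t)$ from outside) are precisely the standard details that remark is gesturing at. No gaps.
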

The following lemma is due to the fact that augmenting along cycles preserves the value of a flow and hence does not change the reachability relations between nodes.
\begin{lemma}\label{lem:invariant_under_augmentations}
    Let $x$ be a maximum $s$--$t$ flow in $\dynN$ and $B$  a cycle in $\dynN_{x}$.
    Define $x' = x + u(B)\cdot \chi^B$.
    Then $V_x(s) = V_{x'}(s)$, $V_x(t) = V_{x'}(t)$, and $\overline{V}_x = \overline{V}_{x'}$.
\end{lemma}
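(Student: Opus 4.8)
The plan is to show that augmenting a maximum $s$-$t$ flow $x$ along a cycle $B$ in $\dynN_x$ changes neither the flow value nor, crucially, the residual reachability structure, and then to read off the three claimed equalities. First I would record the elementary fact that $x' = x + u(B)\cdot\chi^B$ is again a feasible $s$-$t$ flow of the same value as $x$: feasibility because $u(B) = \min\{u_x(a): a \in A(B)\}$ guarantees $0 \le x'(a) \le u(a)$ on every arc (using the usual convention that adding flow along a backward arc $\cev a$ means decreasing $x$ on $a$), and conservation because $\chi^B$ is a circulation, so the net flow out of $s$ is unchanged. In particular $x'$ is still a \emph{maximum} $s$-$t$ flow in $\dynN$, so Observation~\ref{obs:disjoint} applies to $x'$ as well and the three node sets $V_{x'}(s)$, $V_{x'}(t)$, $\overline V_{x'}$ are well-defined and pairwise disjoint.

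Next I would prove $V_x(s) = V_{x'}(s)$, and then $V_x(t) = V_{x'}(t)$ follows by the symmetric argument (reversing all arcs, or applying the same reasoning in the reverse network). For this it suffices to show $V_{x'}(s) \subseteq V_x(s)$ and $V_x(s) \subseteq V_{x'}(s)$; by the symmetry of the roles of $x$ and $x'$ — note $x = x' + u_x(B)\cdot\chi^{\cev B}$ where $\cev B$ is the reverse cycle in $\dynN_{x'}$, which is indeed a cycle there because every arc of $B$ carries strictly positive residual capacity in $\dynN_x$, hence its reverse carries positive residual capacity in $\dynN_{x'}$ — it is enough to prove one inclusion, say $V_{x'}(s) \subseteq V_x(s)$, in a form general enough that the same statement with $x$ and $x'$ swapped gives the other. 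The key observation is that the only arcs whose residual existence can change between $\dynN_x$ and $\dynN_{x'}$ are arcs lying on $B$ or their reverses; an arc $a$ with $u_x(a) > 0$ that is not on $B$ still has $u_{x'}(a) = u_x(a) > 0$. So any $s$-$v$ path $P$ in $\dynN_{x'}$ that happens to avoid all arcs of $A(B) \cup \cev{A(B)}$ is already an $s$-$v$ path in $\dynN_x$, giving $v \in V_x(s)$.

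The substantive step is handling an $s$-$v$ path $P$ in $\dynN_{x'}$ that \emph{does} use an arc of $B$ (in forward or reverse direction). Here I would argue that every node of $B$ is reachable from $s$ in $\dynN_{x'}$ if and only if it is reachable from $s$ in $\dynN_x$: because $B$ is a cycle, in $\dynN_x$ all of its arcs are present, so all nodes of $B$ are mutually reachable within $\dynN_x$; likewise in $\dynN_{x'}$ all arcs of $\cev B$ are present, so all nodes of $B$ are again mutually reachable in $\dynN_{x'}$. Thus $V(B)$ lies entirely inside $V_x(s)$ or entirely outside it, and the same dichotomy holds for $V_{x'}(s)$. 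Now take the $s$-$v$ path $P$ in $\dynN_{x'}$ using some arc incident to $B$: let $w$ be the first vertex of $P$ that lies on $B$. The prefix $P[s,w]$ uses no arc of $A(B) \cup \cev{A(B)}$ except possibly its last arc; trimming at the tail of that last arc, which is itself either on $B$ or reachable from $s$ by an earlier-trimmed piece, one reduces to showing $w \in V_x(s)$, i.e. some vertex of $B$ is in $V_x(s)$. But a short case analysis on the last $B$-arc of $P[s,w]$ shows its tail is a non-$B$-reachable vertex in $\dynN_{x'}$ reachable by a $B$-free prefix, hence in $V_x(s)$, and then by mutual reachability of $V(B)$ in $\dynN_x$ we get all of $V(B) \subseteq V_x(s)$, so in particular $v$, being reachable from $w \in V(B)$ via the suffix of $P$ which we again induct on, lands in $V_x(s)$. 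I expect this bookkeeping — carefully peeling off the portions of $P$ that touch $B$ and re-entering $\dynN_x$ via the ``all nodes of $B$ are mutually reachable'' principle — to be the main obstacle, though it is more tedious than deep. Finally, $\overline V_x = \overline V_{x'}$ is immediate: $\overline V_x = V \setminus (V_x(s) \cup V_x(t)) = V \setminus (V_{x'}(s) \cup V_{x'}(t)) = \overline V_{x'}$.
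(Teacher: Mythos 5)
Your proof is correct, and it is genuinely more detailed than what the paper offers: the paper states no proof at all, only the preceding remark that ``augmenting along cycles preserves the value of a flow and hence does not change the reachability relations between nodes,'' which implicitly appeals to the standard fact that for \emph{any} maximum flow the set of nodes reachable from $s$ (respectively, from which $t$ is reachable) in the residual network is the same, being determined by the inclusion-minimal (respectively, inclusion-maximal) minimum $s$-$t$ cut. Your argument is instead local and elementary: you observe that the only residual arcs that can appear when passing from $\dynN_x$ to $\dynN_{x'}$ are reverses of arcs of $B$, that all such arcs have both endpoints in $V(B)$, and that the nodes of $V(B)$ are mutually reachable in both residual networks (via $B$ in $\dynN_x$ and via $\cev{B}$ in $\dynN_{x'}$), so any $s$-$v$ path in one network can be converted into an $s$-$v$ walk in the other by detouring around $B$. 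This buys you something the paper's one-liner does not: your argument nowhere uses maximality of $x$, so it establishes the invariance of $V_x(s)$ and $V_x(t)$ under cycle augmentation for arbitrary feasible flows, whereas the paper's route needs $x$ to be maximum. The bookkeeping in your middle paragraph (peeling the path at the first vertex of $B$) could be streamlined by simply replacing each ``new'' arc of the path by a $B$-detour, but the reasoning is sound as written.
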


Let $f$ be a flow over time that is repeated during $[\theta_1,\theta_2]$.
Throughout the analysis of the second part of our construction it is important to lower bound how far a cycle $C$ in $\TENS{1}{\theta_2}_f$ or $\TENS{\theta_1}{\theta}_f$ can reach into the repeated part $\TENS{\theta_1}{\theta_2}_f$ of the time-expanded network.
For this purpose we consider the \emph{components} of $C$.
Let $C$ be a cycle in $\TENS{1}{\theta_2}_f$ or $\TENS{\theta_1}{\theta}_f$ or a cycle that intersects $\TENS{1}{\theta_1}_f$, $\TENS{\theta_1}{\theta_2}_f$ and $\TENS{\theta_2}{\theta}_f$.
Define $\tilde{C} = C \setminus \{\hat{s},\hat{t}\}$.
We can decompose $\tilde{C}$ into paths $\tilde{C}_1,\ldots,\tilde{C}_\ell$ as follows: 
Each of the paths $\tilde{C}_1,\ldots,\tilde{C}_\ell$ either starts at an intersection point $v$ of $\tilde{C}$ with layer $\theta_1$ or $\theta_2$, or the starting point of $\tilde{C}$ in case that $\tilde{C}$ is a path.
The endpoint of a path $\tilde{C}_j$ with $j \in \{1,\ldots,\ell\}$ starting at a node $v$ is the first intersection point $v'$ of $\tilde{C}$ with layer $\theta_1$ or $\theta_2$ following after $v$, or the endpoint of $\tilde{C}$ if there is no intersection point following after $v$. 
We call $\tilde{C}_1,\ldots,\tilde{C}_\ell$ the \emph{components} of $C$.
In the analysis of our construction we are interested in a subset of the components of $C$, called the \emph{repeated components} defined as follows:
\begin{align}\label{eq:components}
    \begin{aligned}
    \text{The \emph{repeated components of $C$}}
    \text{ are the components of $C$ that lie in $\TENS{\theta_1}{\theta_2}_f$.}
    \end{aligned}
\end{align}
In the following lemma we derive an upper bound on the number of repeated components  that a cycle $C$ can have.
\begin{lemma}\label{lem:num_of_connected_components}
    The following hold:
    \begin{itemize}
        \item If $C$ is in $\TENS{\theta_1}{\theta}_{f}$ or in $\TENS{1}{\theta_2}_{f}$ , then $C$ has at most $|V|+1$ repeated components.
        \item If $C$ intersects $\TENS{1}{\theta_1}_{f}$, $\TENS{\theta_1}{\theta_2}_{f}$ and $\TENS{\theta_2}{\theta}_{f}$, then $C$ contains at most $|V|$ repeated components starting in layer $\theta_1$ and at most $|V|$ repeated components starting in layer $\theta_2$. 
        Moreover, there is at most one additional repeated component ending in layer $\theta_1$ or ending in layer $\theta_2$.
    \end{itemize}
\end{lemma}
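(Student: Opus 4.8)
The plan is to bound the number of repeated components by tracking, for each such component, the pair consisting of its starting node in $V$ and the direction in which it leaves the boundary layer, and arguing that two repeated components cannot share this data. First I would fix the cycle $C$ and its decomposition $\tilde C_1,\dots,\tilde C_\ell$ into components as defined via the intersection points with layers $\theta_1$ and $\theta_2$. A repeated component, by~\eqref{eq:components}, lies entirely in $\TENS{\theta_1}{\theta_2}_f$; by construction it begins at a node $v^{\theta_1}$ or $v^{\theta_2}$ on the boundary of the repeated part and, because it is a subpath of a cycle whose interior avoids both boundary layers until it returns to one of them, it immediately steps into the strict interior (toward larger layer indices from $\theta_1$, or smaller ones from $\theta_2$). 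The key point is that $C$ is a path (a cycle, viewed as a closed path with distinct internal vertices), so all of its vertices are distinct; hence the starting vertices of distinct repeated components are distinct vertices of $\TENS{\theta_1}{\theta_2}_f$.

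Next I would separate repeated components by which layer they start in. Consider those starting in layer $\theta_1$. Each such component leaves $v^{\theta_1}$ along one of the (at most) time-expanded arcs out of $v^{\theta_1}$ going to layer $\theta_1+1$; but more to the point, its first vertex in layer $\theta_1$ is $v^{\theta_1}$ for some $v\in V$, and since the vertices of $C$ are pairwise distinct, no two repeated components can start at the same node $v^{\theta_1}$. There are $|V|$ choices of $v$, which already gives at most $|V|$ repeated components starting in layer $\theta_1$; symmetrically at most $|V|$ starting in layer $\theta_2$. This proves the two ``at most $|V|$'' bounds in the second bullet.

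For the ``at most one additional'' component in the second bullet: the repeated components partition into those starting at $\theta_1$, those starting at $\theta_2$, and — potentially — components that do \emph{not} start at a boundary layer, which by the definition of components can only happen for the (at most one) component containing the designated starting point of $\tilde C$ when $\tilde C$ is a path, i.e., the component incident to $\hat s$/$\hat t$ after deleting them. That single exceptional component may end in layer $\theta_1$ or layer $\theta_2$ without starting there; all other components both start and end at boundary layers, so they are already counted by the vertex-distinctness argument above. Hence there is at most one extra repeated component, proving the ``moreover'' clause. For the first bullet, where $C$ lies in $\TENS{\theta_1}{\theta}_f$ (or $\TENS{1}{\theta_2}_f$), there is only \emph{one} boundary layer of the repeated part inside this subnetwork, namely $\theta_2$ (resp. $\theta_1$); every repeated component starts at that layer except possibly the one exceptional component coming from the $\hat s$/$\hat t$-incident piece, so the count is at most $|V|+1$.

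The main obstacle I anticipate is making precise the claim that every repeated component except at most one truly \emph{starts} at a boundary layer node — i.e., carefully unwinding the definition of ``components'' (which allows a first and last component to begin/end at $\tilde C$'s endpoints rather than at layer $\theta_1$ or $\theta_2$) and checking that, because $\hat s,\hat t$ are removed to form $\tilde C$ and they are the only nodes outside all layers, this endpoint ambiguity affects at most one component. One must also verify the corner case where $C$ does not pass through $\hat s$ or $\hat t$ at all, in which case $\tilde C=C$ is already closed and \emph{every} repeated component starts at a boundary layer, so the bound even improves; and one should note that a ``repeated component'' consisting of a single vertex sitting in a boundary layer is vacuous or easily absorbed. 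Beyond that bookkeeping, the argument is purely combinatorial and relies only on the distinctness of the vertices of a cycle together with the structural definition of components, so no quantitative estimates are needed.
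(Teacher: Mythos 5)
Your counting argument is essentially the one the paper uses: every repeated component, with at most one exception (the component containing the endpoint of $\tilde C$ adjacent to $\hat s$ or $\hat t$), begins at a vertex of layer $\theta_1$ or layer $\theta_2$, and since the vertices of the cycle $C$ are pairwise distinct, each of these two layers can host at most $|V|$ such starting vertices. This gives the two ``at most $|V|$'' bounds and the single extra component of the second bullet exactly as in the paper, and your identification of the exceptional component (the first piece of $\tilde C$, which ends rather than starts at a boundary layer) matches the ``moreover'' clause.

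For the first bullet, however, one step of your justification is wrong as stated. You claim that when $C\subseteq\TENS{\theta_1}{\theta}_f$ ``there is only one boundary layer of the repeated part inside this subnetwork, namely $\theta_2$.'' By definition $\TENS{\theta_1}{\theta}_f$ is the subnetwork induced by \emph{all} layers from $\theta_1$ to $\theta$, so layer $\theta_1$ is contained in it, and a cycle in $\TENS{\theta_1}{\theta}_f$ can dip down to layer $\theta_1$ and turn around there; each such visit produces a repeated component starting in layer $\theta_1$. Your argument therefore only yields $2|V|+1$ for the first bullet, not $|V|+1$. The paper closes this case by a different split: it counts the repeated components of transit time at most zero (which it argues start in layer $\theta_2$, hence at most $|V|$ of them) and asserts that $C$ has at most one repeated component of strictly positive transit time. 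To recover the constant $|V|+1$ you would need an argument of that second kind — i.e., a reason why at most one repeated component can climb from $\theta_1$ up to $\theta_2$ — rather than the claim that layer $\theta_1$ is absent from the subnetwork. (The weaker bound $2|V|+1$ would in fact suffice for the way the lemma is consumed later, e.g.\ in Lemma~\ref{lem:aux_height}, where the components are budgeted even more loosely, but it does not prove the statement as written.)
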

\begin{proof}
    By Remark~\ref{rem}$, C$ lies in $\TENS{\theta_1}{\theta}_{f}$.
    Every repeated component of $C$ with transit time at most zero is a subpath $C$ in $\TEN{\theta}_{f}$ starting in layer $\theta_2$.
    Since layer $\theta_2$ contains $|V|$ vertices, there can be at most $|V|$ such subpaths.
    Additionally $C$ also has at most one repeated component with strictly positive transit time.
    If $C$ intersects $\TENS{1}{\theta_1}_{f}$, $\TENS{\theta_1}{\theta_2}_{f}$ and $\TENS{\theta_2}{\theta}_{f}$, then by the same argument as above $C$ has at most $|V|$ repeated components starting in layer $\theta_1$ and at most $|V|$ repeated components starting in layer $\theta_2$. 
    If $C$ is of the form as depicted in Figure~\ref{fig:add_comp} there is one additional repeated component. 
    \begin{figure}[h!]
        \centering
        \includegraphics{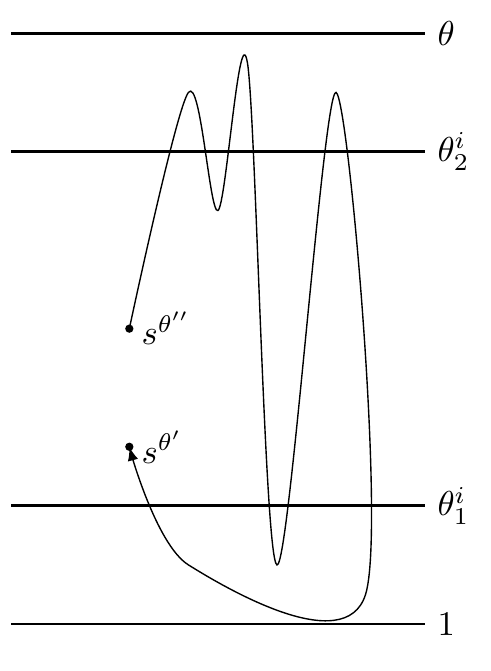}
        \caption{If $C$ has this structure, it can contain at most $2|V| + 1$ repeated components}
        \label{fig:add_comp}
    \end{figure}
\end{proof}
\section{The Projection and Lifting Lemma}\label{sec:proj_lifting}
In this section we will assume that $f$ is a given repeated maximum flow over time in $\TEN{\theta}$ that is not necessarily minimal with respect to costs.
Further, we assume that the $f$ is repeated during an interval $[\theta_1,\theta_2]$ that is ``large enough''.
More precisely, we suppose
\begin{align}\label{eq:bound_on_repeated_interval}
    \theta_2-\theta_1 > \left(\sum_{a \in A}\tau(a)\right)\cdot  \left(2\sum_{a \in A}|c(a)|u(a)+3\right)
\end{align}
In order to understand the augmentation of $f$ it will be convenient to study a projection that maps walks from $\TENS{\theta_1}{\theta_2}_f$ to $\dynN_{\phi(f)}$, and a lifting that lifts walks from $\dynN_{\phi(f)}$ back to $\TENS{\theta_1}{\theta_2}_f$.
\begin{definition}[The projection]\label{def:projection}
    We define the projection $\pi: V^\theta \rightarrow V$ as follows,
    \begin{align*}
        \pi(u) = \begin{cases}
                    &s \quad \text{ if } u = \hat{s}\\
                    &t \quad \text{ if } u = \hat{t}\\
                    &v \quad \text{ if there exists } \theta' \in \{1,\ldots,\theta\} \text{ with } v^{\theta'} = u.
                \end{cases} 
    \end{align*}
    Let $W = (v_1,a_1,\ldots,a_{k-1},v_k)$ be a walk in $\TENS{\theta_1}{\theta_2}_f$. 
    We define 

    $$\pi(W) := (\phi(v_1),(\phi(\tail(a_1)),\phi(\head(a_1)),\ldots,(\phi(\tail(a_{k-1})),\phi(\head(a_{k-1})),\phi(v_k)).$$
\end{definition}
\begin{definition}[The lifting]\label{def:lifting}
    Let $W = (v_1,a_1,\ldots,a_{\ell-1},v_{\ell})$ be a walk in $\dynN_{\phi(f)}$ and $\theta ' \in [\theta_1,\theta_2-h_{\phi(f)}(W)]$.
    Set $t_1 := \max\{\theta',\theta'-h_{\phi(f)}^-(W)\}$, $t_j := t_{j-1}+\tau(a_{j-1})$ for all $j \in \{2,\ldots,\ell\}$.
    The \emph{lifting of $W$ to layer $\theta'$} is defined as follows,
    \begin{align*}
        \lift(W,\theta') = (v_1^{t_1},a^{t_1}_1,\ldots,a^{t_{\ell-1}}_{\ell-1},v_\ell^{t_\ell}).
    \end{align*}
\end{definition}
Note that $\lift(W,\theta')$ is in $\TENS{\theta_1}{\theta_2}_f$ for all $\theta ' \in [\theta_1,\theta_2-h_{\phi(f)}(W)]$.
This follows from the regular structure of  $\TENS{\theta_1}{\theta_2}_f$, see Observation~\ref{obs:layers_are_the_same}.
The following relations are easy to verify.
\begin{obs}\label{obs:cycles}
    If $C$ is a cycle in $\TENS{\theta_1}{\theta_2}_f$, then $\pi(C)$ is a closed walk in $\dynN_{\phi(f)}$. 
    If $P$ is a path in $\TENS{\theta_1}{\theta_2}_f$, then $\pi(P)$ is a non-closed walk in $\dynN_{\phi(f)}$.

    Let $W$ be a walk in $\dynN_{\phi(f)}$ and $Q$  a walk in $\TENS{\theta_1}{\theta_2}$.
    Then $\pi(\lift(W,\theta')) = W$ for all $\theta' \in [\theta_1,\theta_2-h_{\phi(f)}(W)]$, and $Q = \lift(\pi(Q),\low(Q))$.
\end{obs}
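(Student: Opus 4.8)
The plan is to prove all four statements by unwinding the definitions of $\pi$ and $\lift$, using as the only structural input that $f$ is repeated during $[\theta_1,\theta_2]$, so that the residual slabs $\TENS{\theta'}{\theta'+1}_f$ are all isomorphic (Observation~\ref{obs:layers_are_the_same}). The fact I would isolate first is an ``arc dictionary'': every arc of $\TENS{\theta_1}{\theta_2}_f$ not incident to $\hat s$ or $\hat t$ is a forward copy $a^{\theta'}$ or backward copy $\cev{a^{\theta'}}$ of some $a\in A$ whose endpoints lie in layers of $[\theta_1,\theta_2]$; since $f$ is repeated on this interval, $f(a^{\theta'})=\phi(f)(a)$ independently of $\theta'$, so the residual capacities of $a^{\theta'}$ and $\cev{a^{\theta'}}$ coincide with those of $a$ and $\cev a$ in $\dynN_{\phi(f)}$. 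In particular such a copy lies in $A(\TENS{\theta_1}{\theta_2}_f)$ if and only if the arc it projects to lies in $A(\dynN_{\phi(f)})$, and $\pi(a^{\theta'})=a$, $\pi(\cev{a^{\theta'}})=\cev a$; the arcs incident to $\hat s,\hat t$ project to $s,t$ and are handled directly.

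Given this dictionary the first two claims are immediate: for a walk $W=(v_1,a_1,\dots,a_{k-1},v_k)$ in $\TENS{\theta_1}{\theta_2}_f$, each $\pi(a_i)$ is an arc of $\dynN_{\phi(f)}$ and consecutive arcs of $\pi(W)$ share the endpoint $\pi(v_{i+1})$, so $\pi(W)$ is a walk; and since $\pi$ is a well-defined function on nodes, $\pi(v_1)=\pi(v_k)$ whenever $v_1=v_k$, so a cycle maps to a closed walk and a path maps to a non-closed walk, by the same consideration of the endpoints $\pi(v_1),\pi(v_k)$.

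For $\pi\circ\lift=\mathrm{id}$: writing $\lift(W,\theta')=(v_1^{t_1},a_1^{t_1},\dots,a_{\ell-1}^{t_{\ell-1}},v_\ell^{t_\ell})$ with $t_{j+1}=t_j+\tau(a_j)$, the arc $a_j^{t_j}$ is by construction the copy $(v_j^{t_j},v_{j+1}^{t_{j+1}})$ of $a_j=(v_j,v_{j+1})$, so applying $\pi$ arcwise recovers each $a_j$ and each $v_j$, giving $\pi(\lift(W,\theta'))=W$ (that $\lift(W,\theta')$ lies in $\TENS{\theta_1}{\theta_2}_f$ is the remark already made after Definition~\ref{def:lifting}). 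For $\lift\circ\pi=\mathrm{id}$ on a walk $Q=(v_1^{\sigma_1},b_1,\dots,b_{m-1},v_m^{\sigma_m})$ of $\TENS{\theta_1}{\theta_2}$: each $b_i$ is the copy $a_i^{\sigma_i}$ of $a_i=(v_i,v_{i+1})\in A(\dynN_{\phi(f)})$ with $\sigma_{i+1}=\sigma_i+\tau(a_i)$, hence $\pi(Q)=(v_1,a_1,\dots,v_m)$ and $h_{\phi(f)}(v_i,\pi(Q))=\sum_{j<i}\tau(a_j)=\sigma_i-\sigma_1$, so $h_{\phi(f)}^-(\pi(Q))=\low(Q)-\sigma_1\le 0$ where $\low(Q)=\min_i\sigma_i$ is the lowest layer met by $Q$. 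Substituting $\theta'=\low(Q)$ in the definition of $\lift$ yields $t_1=\max\{\low(Q),\,\low(Q)-h_{\phi(f)}^-(\pi(Q))\}=\max\{\low(Q),\sigma_1\}=\sigma_1$, and then $t_j=\sigma_j$ by induction, so $\lift(\pi(Q),\low(Q))=Q$.

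None of this is conceptually hard; the two places where I expect to have to be careful are (i) fixing the convention for $\pi$ at the super-terminals $\hat s,\hat t$ so that $\pi$ of a walk through a super-terminal is still a legitimate walk in $\dynN_{\phi(f)}$, and (ii) the index computation in the last identity, where one must verify that the $\max$ in the definition of $\lift$, together with $h_{\phi(f)}^-(\pi(Q))\le 0$, returns precisely the starting layer $\sigma_1$ of $Q$ rather than $\low(Q)$.
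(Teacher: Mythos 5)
The paper offers no proof of this observation (it is asserted as ``easy to verify''), and your definition-unwinding argument is the right approach; the arc dictionary, the computation $\pi(\lift(W,\theta'))=W$, and in particular the index check $t_1=\max\{\low(Q),\sigma_1\}=\sigma_1$ for $\lift(\pi(Q),\low(Q))=Q$ are all correct and are exactly the points that needed care.

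There is, however, one genuine gap: your justification of ``a path maps to a non-closed walk.'' You write that this follows ``by the same consideration of the endpoints $\pi(v_1),\pi(v_k)$,'' but the cycle case only uses that $\pi$ is well defined ($v_1=v_k\Rightarrow\pi(v_1)=\pi(v_k)$); the path case would need $\pi$ to be \emph{injective} on the endpoints, and it is not: two distinct time-copies $v^{\theta'}$ and $v^{\theta''}$ of the same node both project to $v$. Concretely, if $B$ is a cycle in $\dynN_{\phi(f)}$ through $v$ with $\tau(B)=1$, then $\lift(B,\theta')$ is a genuine path in $\TENS{\theta_1}{\theta_2}_f$ from $v^{\theta'}$ to $v^{\theta'+1}$ (all its nodes are distinct), yet its projection is the closed walk $B$. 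So this clause of the observation is false as literally stated, and no argument along your lines can rescue it; the correct statement is that $\pi(P)$ is non-closed precisely when the two endpoints of $P$ are copies of distinct nodes of $V$ (which is how the observation is actually used downstream, where the ``path part'' of a decomposition is allowed to be trivial). You should either add that hypothesis or record the weaker conclusion that $\pi(P)$ is a walk whose endpoints are $\pi(v_1)$ and $\pi(v_k)$. The remaining three claims of your proof stand as written, modulo the minor point (which you flag yourself) that arcs incident to $\hat s,\hat t$ project to degenerate loops at $s,t$ and must be suppressed rather than ``handled directly.''
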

We are now ready to state and prove the projection and the lifting lemma.
\begin{lemma}[Projection Lemma]\label{lem:projection_lemma}
    Let $C$ be a cycle in $\TENS{\theta_1}{\theta_2}_f$ with $c(C)<0$.
    Then there exists a cycle $B \subseteq \N_{\phi(f)}$ in the decomposition of $\pi(C)$ with $c(B)<0$ such that the following hold:
    \begin{enumerate}[label = (\alph*)]
        \item If $\tau(C) = 0$ \emph{and} the decomposition of $\pi(C)$ does not contain a cycle $B' \subseteq \N_{\phi(s)}$ with $\tau(B') < 0$, then $\tau(B) = 0$.
        \item If $\hat{s} \in V(C)$ or $\hat{t} \in V(C)$, respectively, then every cycle in the decomposition of $\pi(C)$ is $s$- or $t$-reachable, respectively.
        In particular, $B$ is $s$- or $t$-reachable.

        \item Suppose there is an arc $a \in A(C)$ with $a \in \cev{A}^\theta$ that is induced by an \emph{$\hat{s}$-$\hat{t}$ path} along which $f$ sends flow.
        If $\tau(C) \subseteq \dynN_{\phi(f)}(s)$, then every cycle in the decomposition of $\phi(C)$ is $s$-reachable.
        In particular, $B$ is $s$-reachable.
        Similarly, if $\tau(C) \subseteq \dynN_{\phi(f)}(t)$, then every cycle in the decomposition of $\pi(C)$ is $t$-reachable.
    \end{enumerate}
\end{lemma}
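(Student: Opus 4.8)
The plan is to push $C$ down to a closed walk in $\dynN_{\phi(f)}$, decompose it into cycles, and then read off (a)--(c) from where the arcs of $C$ sit.

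\emph{Reduction and the negative-cost cycle.} First I would verify that $\pi(C)$ is a closed walk in $\dynN_{\phi(f)}$ (cf.\ Observation~\ref{obs:cycles}). Every arc of $C$ lies between two of the layers $\theta_1,\dots,\theta_2$ or is incident to $\hat s$ or $\hat t$. For a forward copy $a^{\theta'}$ on $C$, $f$ being repeated during $[\theta_1,\theta_2]$ gives $\phi(f)(a)=f(a^{\theta_1})=f(a^{\theta'})<u(a)$, so $a\in A(\dynN_{\phi(f)})$; for a backward copy $\cev{a^{\theta'}}$ on $C$ it gives $\phi(f)(a)=f(a^{\theta'})>0$, so $\cev a\in A(\dynN_{\phi(f)})$; the arcs incident to $\hat s$ (resp.\ $\hat t$) project to the trivial loop at $s=\pi(\hat s)$ (resp.\ $t=\pi(\hat t)$), which I simply delete. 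Thus $\pi(C)$ is a closed walk in $\dynN_{\phi(f)}$ visiting $s$ whenever $\hat s\in V(C)$ and $t$ whenever $\hat t\in V(C)$. Since $c(a^{\theta'})=c(a)$, $c(\cev{a^{\theta'}})=-c(a)$, and the $\hat s,\hat t$-arcs have zero cost, costs agree arc by arc, so $c(\pi(C))=c(C)<0$; the identical bookkeeping for transit times (the layer change of an arc of $\TEN{\theta}$ equals its transit time in $\dynN$, and the $\hat s,\hat t$-arcs have transit time $0$) gives $\tau(\pi(C))=\tau(C)$. Applying Lemma~\ref{lem:decomposition_of_walk} to the closed walk $\pi(C)$ yields arc-disjoint cycles $B_1,\dots,B_m$ of $\dynN_{\phi(f)}$ whose arc multisets partition that of $\pi(C)$, so $\sum_i c(B_i)=c(\pi(C))<0$; I take $B$ to be some $B_i$ with $c(B_i)<0$, to be refined for part~(a).

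\emph{Part (a).} Since $\tau(C)=0$, we have $\sum_i\tau(B_i)=\tau(\pi(C))=\tau(C)=0$. The hypothesis rules out cycles of negative transit time among the $B_i$; I would first note that its restriction to $\dynN_{\phi(f)}(s)$ is immaterial here, because in the situations where (a) is used all of $B_1,\dots,B_m$ lie in $\dynN_{\phi(f)}(s)$ --- either because $\hat s\in V(C)$ and part~(b) applies, or because by Observation~\ref{obs:disjoint} the connected closed walk $\pi(C)$, hence each $B_i$, lies entirely within one of $\dynN_{\phi(f)}(s),\overline{\dynN}_{\phi(f)},\dynN_{\phi(f)}(t)$. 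Then every $\tau(B_i)\ge 0$, and as they sum to $0$ they all vanish; in particular the negative-cost cycle $B=B_i$ satisfies $\tau(B)=0$.

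\emph{Parts (b) and (c).} Both rest on the fact that $\pi(C)$ is a \emph{connected} closed walk in $\dynN_{\phi(f)}$: from any node it visits one can follow $\pi(C)$ to any other node it visits and then continue around back, so for every $B_i$, every $u\in V(B_i)$, and every fixed $w\in V(\pi(C))$ there are a $w$-$u$ path and a $u$-$w$ path in $\dynN_{\phi(f)}$. For (b): if $\hat s\in V(C)$ then $s\in V(\pi(C))$, so with $w=s$ every $B_i$ --- in particular $B$ --- is $s$-reachable, and symmetrically for $\hat t$ and $t$. For (c): let $a=\cev{a'}\in A(C)$ reverse a forward arc $a'$, a copy of $\bar a\in A$, lying on an $\hat s$-$\hat t$ path along which $f$ sends flow; then $\phi(f)(\bar a)=f(a')>0$, and from that flow-carrying path together with repeatedness I would extract a flow-carrying $s$-$\tail(\bar a)$ path and a flow-carrying $\head(\bar a)$-$t$ path of $\phi(f)$ in $\dynN$, whose reversals are a $\tail(\bar a)$-$s$ path and a $t$-$\head(\bar a)$ path in $\dynN_{\phi(f)}$. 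Since $\pi(a)=\cev{\bar a}\in A(\pi(C))$, both $\tail(\bar a)$ and $\head(\bar a)$ lie in $V(\pi(C))$. Hence if $\pi(C)\subseteq\dynN_{\phi(f)}(s)$ then, for $u\in V(B_i)$, $s$ reaches $u$ by assumption while $u\to\tail(\bar a)$ along $\pi(C)$ followed by $\tail(\bar a)\to s$ gives a $u$-$s$ path, so $B_i$ is $s$-reachable; the case $\pi(C)\subseteq\dynN_{\phi(f)}(t)$ is symmetric via $t\to\head(\bar a)$ then $\head(\bar a)\to u$ along $\pi(C)$.

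\emph{Main obstacle.} The cost/transit bookkeeping and the connectivity argument are routine. The delicate part is (a): one must pin down the transit-time balance $\tau(\pi(C))=\tau(C)$ with the $\hat s,\hat t$-arcs genuinely contributing $0$, and verify that the hypothesis ---phrased only for cycles inside $\dynN_{\phi(f)}(s)$--- is actually strong enough, i.e.\ that the decomposition cycles all lie in $\dynN_{\phi(f)}(s)$ in every case where (a) is invoked. The remaining care points are making the deletion of the loops at $s,t$ rigorous enough that $\pi(C)$ is literally a closed walk, and checking in (c) that ``$f$ sends flow along an $\hat s$-$\hat t$ path'' really produces flow \emph{paths} through $\bar a$ in $\dynN$ rather than circulation --- which is exactly why the hypothesis is phrased that way.
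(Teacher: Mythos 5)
Your reduction, your choice of the negative-cost cycle $B$, and your proofs of parts (a) and (b) coincide with the paper's argument: project, decompose via Lemma~\ref{lem:decomposition_of_walk}, use additivity of cost and transit time for (a), and use connectivity of the closed walk through $s$ (resp.\ $t$) for (b). Your observation that the restriction ``$B'\subseteq \N_{\phi(f)}(s)$'' in the hypothesis of (a) is immaterial because Observation~\ref{obs:disjoint} confines all decomposition cycles to a single region is a sensible reading of what is most likely a typo, and goes slightly beyond what the paper writes.

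Part (c) is where you diverge, and where the gap sits. You want a $u$-$s$ path obtained by walking along $\pi(C)$ to $\tail(\bar a)$ and then reversing a \emph{flow-carrying $s$-$\tail(\bar a)$ path of $\phi(f)$}. That such a path exists does not follow from the hypothesis as directly as you suggest. The $\hat s$-$\hat t$ path in $\TEN{\theta}$ along which $f$ sends flow may enter the network at a layer below $\theta_1$ (or exit above $\theta_2$), and $\phi(f)$ only records the flow values at layer $\theta_1$; so the projection of the prefix of that path need not carry positive $\phi(f)$-flow, and its reversal need not lie in $\dynN_{\phi(f)}$. Moreover, even granting $\phi(f)(\bar a)>0$, the arc $\bar a$ could sit on a \emph{cycle} of the path-and-cycle decomposition of $\phi(f)$ rather than on an $s$-$t$ path, so there need not be any flow-carrying $s$-$\tail(\bar a)$ path at all. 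The paper avoids this entirely: it extracts from the hypothesis only the existence of a static $s$-$t$ path in $D$ through $\pi(\cev a)$, takes the $\head(\pi(\cev a))$-$t$ portion $\tilde P$, and then navigates the residual network $\dynN_{\phi(f)}$ by following $\tilde P$, detouring around saturated arcs along residuals of decomposition cycles, until it meets a flow-carrying $s$-$t$ path $P'$ of $\phi(f)$ (which must happen by maximality, at the latest at $t$), and finally reverses $P'$ to reach $s$. You flag the issue yourself as a ``care point,'' but it is the actual content of part (c), not a detail, so as written the proof of (c) is incomplete.
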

%

\begin{proof}
    Let $C = (v_1,a_1,\ldots,a_{k-1},v_k)$ be a cycle in $\TENS{\theta_1}{\theta_2}_f$.
    By Observation~\ref{obs:cycles}, $\pi(C)$ is a closed walk in $\dynN_{\phi(f)}$ that can be decomposed into cycles $B_1,\ldots,B_\ell$ in $\dynN_{\phi(f)}$ by Lemma~\ref{lem:decomposition_of_walk}.
    Note that $0< c(C) = \sum_{i = 1}^\ell c(B_i)$.
    Thus, there is an $j \in \{1,\ldots,\ell\}$ such that $c(B_j) < 0$.

    We proceed by proving part $(a)$ of Lemma~\ref{lem:projection_lemma}.
    If the decomposition of $\pi(C)$ does \emph{not} contain a cycle $B'$ with $\tau(B') < 0$, then $0 = \tau(\pi(C)) = \sum_{i = 1}^k \tau(B_i)$.
    Since $\tau(B_i) \geq 0$ for all $i \in \{1,\ldots,k\}$ by assumption, we get $\tau(B_i) = 0$ for all $i \in \{1,\ldots,k\}$. 

    For part $(b)$ note that if $\hat{s} \in V(C)$, then, by Observation~\ref{obs:cycles}, $\pi(C)$ is a closed walk in $\dynN_{\phi(f)}$ that contains the source $s$.
    This directly implies that every cycle in the decomposition of $\pi(C)$ is $s$-reachable. 
    The same arguments imply that every cycle in the decomposition of $\pi(C)$ is $t$-reachable if $\hat{t} \in V(C)$.
    It remains to prove part $(c)$ of the lemma.
    Suppose that $\pi(C) \subseteq \dynN_{\phi(f)}(s)$.
    The other case can be shown by a symmetric argument.
    Since $\pi(C) \subseteq  \dynN_{\phi(f)}(s)$, we also have $B_i \subseteq  \dynN_{\phi(f)}(s)$ for all $i \in \{1,\ldots,\ell\}$ and thus there exists an $s$-$u$ path in $\dynN_{\phi(f)}(s)$ for every $u \in V(B_i)$ for all $i \in \{1,\ldots,\ell\}$.
    By assumption there is an arc $a\in A(C)$ that is induced by an $\hat{s}$-$\hat{t}$ path along which $f$ sends flow.
    This implies that in the static network $\dynN$ there exists and $s$-$t$ path through $\pi(\cev{a})$.
    In particular, there is an $\head(\pi(\cev{a}))$-$t$ path $\tilde{P}$ in $\dynN$.
    %
    By Lemma~\ref{lem:maxflow_induces_max_flow}, $\phi(f)$ is a maximum $s$-$t$ flow in $\dynN$ sending flow along $s$-$t$ paths and cycles in $\dynN$.
    In the residual network $\dynN_{\phi(f)}$ we can thus, starting from $\head(\pi(\cev{a}))$, traverse along $\tilde{P}$, potentially traveling along the residuals of cycles from the path and cycle decomposition of $\phi(f)$ that block $\tilde{P}$, until we reach an $s$-$t$ path $P'$ from the path and cycle decomposition of $\phi(f)$.
    Next we traverse along the reverse of $P'$ to reach $s$.
    Note that since $\phi(f)$ is a maximum $s$-$t$ flow we will eventually reach an $s$-$t$ path $P'$, at the latest at the sink $t$.
    Overall, this yields that every cycle in the decomposition of $\pi(C)$ is $s$-reachable.
\end{proof}
%
%
%
\begin{lemma}[Lifting Lemma]\label{lem:lifing_lemma}
    Let $B$ be a cycle $\dynN_{\phi(f)}$ with $c(B) < 0$. 
    \begin{enumerate}[label = (\arabic*)]
        \item If $\tau(B) = 0$, then $C_1,\ldots,C_k$ with $k = \theta_2-\theta_1-h_{\phi(f)}(B)$ and  $C_i = \lift(B,i)$ for $i \in \{\theta_1,\ldots,\theta_2-h_{\phi(f)}(B)\}$ are cycles that fulfill $\pi(C_i) = B$ and $c(C_i) = c(B)<0$ for all $i \in \{1,\ldots,k\}$.
        Also, the following hold:
        \begin{enumerate}
            \item The cycles $C_1,\ldots,C_k$ are pairwise disjoint.
            \item Augmenting $f$ along $C_1,\ldots,C_k$ yields a flow over time that is repeated during $[\theta_1+h_{\phi(f)}(B),\theta_2-h_{\phi(f)}(B)]$.
            \item We have $\phi(f)+u(B)\cdot \chi^B = \phi( f + \sum_{i = 1}^k u(C_i)\cdot\chi^{C_i})$.
        \end{enumerate}
        \item If $\tau(B) \neq 0$, $B\subseteq \dynN_{\phi(f)}(s)$ or $B\subseteq \dynN_{\phi(f)}(t)$ and $B$ is $s$- or $t$-reachable, respectively, then there exists a collection of cycles $C_1,\ldots,C_k$ in $\TENS{\theta_1}{\theta_2}_f$ with $k = h_{\phi(f)}(B)$ and $c(C_i) < 0$ for all $i \in \{1,\ldots,k\}$.
        Also, the following hold:
        \begin{enumerate}
            \item The cycles $C_1,\ldots,C_k$ are disjoint.
            \item Augmenting along $C_1,\ldots,C_k$ yields a flow over time that is repeated during $[\theta_1 + 3 \cdot \sum_{a \in A} \tau(a), \theta_2 - 3\cdot \sum_{a \in A} \tau(a)]$.
            \item We have $\phi(f)+u(B)\cdot \chi^B = \phi( f + \sum_{i = 1}^k u(C_i)\cdot\chi^{C_i})$.
        \end{enumerate}
    \end{enumerate}
\end{lemma}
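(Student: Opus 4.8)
The plan is to treat the two cases of the lemma by genuinely different arguments. \emph{In case~(1), where $\tau(B)=0$,} I would first check that each $C_i=\lift(B,i)$ is a (simple) cycle inside $\TENS{\theta_1}{\theta_2}_f$: since $\tau(B)=0$ the lift starts and ends in the same layer, it is simple because $B$ is, and by Observation~\ref{obs:layers_are_the_same} together with the choice of index range it lies within the repeated block. The identities $\pi(C_i)=B$ and $c(C_i)=c(B)<0$ are immediate from Observation~\ref{obs:cycles} and the layer-independence of the time-expanded cost. For~(a): an arc copy $a^{\theta'}$ lies on $C_i$ only for the single index $i$ pinned down by $\theta'$ and the fixed offset of $a$ inside $B$, so the $C_i$ are pairwise arc-disjoint; this, together with $u(C_i)=u(B)$ (residual capacities are constant across the repeated block) and the fact that no $C_i$ uses an arc and its reverse, shows that $f':=f+\sum_i u(B)\chi^{C_i}$ is a feasible flow. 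For~(b): since $f$ is already repeated on $[\theta_1,\theta_2]$, it suffices to show $\sum_i\chi^{C_i}(a^{\theta'})=\sum_i\chi^{C_i}(a^{\theta'+1})$ for every arc $a$ and every interior layer $\theta'$; there the unique index of a lift that could place a copy of $a$ at layer $\theta'$ is admissible, so the sum equals the constant $\chi^{B}(a)$, and this stabilization breaks only within $h_{\phi(f)}(B)$ layers of each end. For~(c), evaluate $\phi(f')$ at layer $\theta_1+h_{\phi(f)}(B)$, the start of the new block: $f'(a^{\theta_1+h_{\phi(f)}(B)})=f(a^{\theta_1})+u(B)\chi^{B}(a)=\phi(f)(a)+u(B)\chi^{B}(a)$ by the computation just made.

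\emph{In case~(2), where $\tau(B)\neq0$,} I would first record that $\tau(B)\geq 0$ necessarily holds (a negative-transit-time cycle in $\dynN_{\phi(f)}$ would contradict Lemma~\ref{lem:maxflow_induces_max_flow}), so $d:=\tau(B)\geq 1$ and a lift of $B$ is a path climbing exactly $d$ layers, which can never be closed into a cycle using only arcs of $\dynN_{\phi(f)}$ — again because no negative-transit-time closed walk exists there. The closing device is the super-source: since $\phi(f)$ is a maximum $s$-$t$ flow of positive value by Lemma~\ref{lem:maxflow_induces_max_flow} and $f$ is repeated on $[\theta_1,\theta_2]$, $f$ routes positive flow from $\hat{s}$ into $s^{\theta'}$ for every $\theta'\in[\theta_1,\theta_2]$, so every backward arc $(s^{\theta'},\hat{s})$ is residual, every forward arc $(\hat{s},s^{\theta'})$ is residual by infinite capacity, and all of these have cost $0$. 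Assuming, without loss of generality, that $B\subseteq\dynN_{\phi(f)}(s)$ and that $B$ is $s$-reachable via $u\in V(B)$, fix an $s$-$u$ path $P$ and a $u$-$s$ path $P'$ in $\dynN_{\phi(f)}$; then a cycle that leaves $\hat{s}$, lifts $P$ up to $u$, runs $N$ wrapped lifts of $B$, lifts $P'$ down to a copy of $s$, and returns to $\hat{s}$ has cost $c(P)+N\,c(B)+c(P')$, which is negative once $N>(|c(P)|+|c(P')|)/|c(B)|$; since $|c(B)|\geq1$ and path costs are at most $\sum_{a\in A}|c(a)|u(a)$, one may take $N\leq 2\sum_{a\in A}|c(a)|u(a)$, so the vertical span of such a cycle is at most $N\,d$ plus the lengths of $P$ and $P'$ — below $\theta_2-\theta_1$ precisely by~\eqref{eq:bound_on_repeated_interval}, so the cycle fits inside $\TENS{\theta_1}{\theta_2}_f$. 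One then builds $k=h_{\phi(f)}(B)$ such cycles at consecutive starting layers and augments along each by a suitable amount, arranged so that in the interior the union of the wrapped lifts of $B$ affects every layer in the same way — this yields~(b), the $3\sum_{a\in A}\tau(a)$-slack absorbing the heights of the lifts and of the connectors, which are pushed towards the two ends — and so that the net contribution to the static projection at the start of the new block is exactly $u(B)\chi^{B}$, which yields~(c); disjointness~(a) is then read off the layout, falling back on Eulerian subgraphs via Lemma~\ref{lem:Eulerian_Subgraph_Decomp} and the ``augmenting along $H$'' convention if the connectors cannot be made node-disjoint from the wrapped lifts or if $P,P'$ carry less residual capacity than $u(B)$.

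The hard part is case~(2): one must make every $C_i$ a negative-cost cycle, keep the augmented flow repeated on the slightly shrunk interval, and force the net effect on the static projection to be exactly $+u(B)\chi^{B}$, all simultaneously — and it is this three-way interplay that pins down the count $k=h_{\phi(f)}(B)$, the starting layers, the wrap counts and the augmentation amounts, all while keeping every $C_i$ inside $[\theta_1,\theta_2]$, which is precisely what the lower bound~\eqref{eq:bound_on_repeated_interval} on the length of the repeated block is there to guarantee. Case~(1), in contrast, amounts to the translation-invariance computation sketched above.
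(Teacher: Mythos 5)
Your case~(1) matches the paper's argument. Your case~(2) has the same architecture as the paper's (close the lifted walk through $\hat{s}$ using an $s$-$v_1$ path and a $v_1$-$s$ path, wrap $B$ many times in between, take $k=h_{\phi(f)}(B)$ shifted copies), but there are two genuine gaps.

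First, and most seriously, your choice of the wrap count $N$ is backwards. You take $N$ just above $(|c(P)|+|c(P')|)/|c(B)|$, i.e.\ $N\leq 2\sum_{a\in A}|c(a)|u(a)$, so that the cycle has small vertical span and ``fits'' inside $[\theta_1,\theta_2]$. But then the $k$ shifted wrapped lifts of $B$ only touch a band of height roughly $N\cdot h_{\phi(f)}(B)$ near the bottom of the repeated interval; every layer above that band receives no increment, while layers in the band receive $+u(B)\chi^{B}$ per layer. The resulting flow is therefore \emph{not} repeated on $[\theta_1+3\sum_{a}\tau(a),\,\theta_2-3\sum_{a}\tau(a)]$, so (b) fails, and (c) fails with it (the projection taken at the start of the claimed new repeated block need not pick up the increment uniformly). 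The paper instead takes $r$ \emph{as large as the interval allows}, namely $r=\lfloor(\theta_2-\theta_{\start}-\tau(P_1)-h^+(P_2))/h_{\phi(f)}(B)\rfloor-1$, so that the union of the $k$ consecutive shifts covers \emph{every} lifting of $B$ in a strip spanning essentially all of $[\theta_1,\theta_2]$; the role of the hypothesis~\eqref{eq:bound_on_repeated_interval} is then to guarantee that this maximal $r$ already exceeds $2\sum_{a\in A}|c(a)|u(a)$, which forces $c(C_i)=c(P_1)+c(P_2)+r\,c(B)<0$. In short: the bound on $\theta_2-\theta_1$ is used to make the \emph{long} cycle cheap, not to make a \emph{short} cycle fit. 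Your sentence about the wrapped lifts ``affecting every layer in the same way'' is exactly what is needed, but it is incompatible with your stated bound on $N$.

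Second, you dismiss the case $\tau(B)<0$ by appealing to Lemma~\ref{lem:maxflow_induces_max_flow}. That lemma only rules out negative-transit-time cycles in $\dynN_{\phi(f_0)}$ for the initial Ford--Fulkerson flow; the Lifting Lemma is invoked for the intermediate flows $f_i$ of the construction, and after augmentations in Step~1.2 the networks $\dynN_{\phi(f_i)}(s)$ and $\dynN_{\phi(f_i)}(t)$ can contain cycles with $\tau(B)<0$ (Lemma~\ref{Cor:no_neg_transit_time_cycle} controls only $\overline{\dynN}_{\phi(f_i)}$, and Lemma~\ref{lem:properties}(b) explicitly contemplates such cycles). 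The case must be handled; the paper does so by the symmetric argument with all arcs reversed and $P_1,P_2$ swapped, which would also repair your proof once the first gap is fixed.
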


\begin{proof}
Let $B = (v_1,a_1,\ldots,a_{\ell-1},v_\ell)$ with $v_1 = v_\ell$.
By potentially relabeling the nodes of $B$, we can assume that  $v_1 = \low(B)$.

We begin with part (1) of the lifting lemma, i.e., $B$ is a negative cost cycle in $\dynN_{\phi(f)}$ with $\tau(B) = 0$.
Since $\tau(B) = 0$, $C_i$ is actually a cycle for all $i \in \{1,\ldots,k\}$.
The definition of the lifting in  Definition~\ref{def:lifting} and Observation~\ref{obs:cycles} imply that  $c(C_i) = c(B)$ and $\pi(C_i) = B$ for all $i \in {1,\ldots,k}$.
For all $i \in \{2,\ldots,k\}$ we obtain $C_i$ from $C_1$ by moving the cycle $C_1$ upwards  by $i-1$ layers. 
As $C_1 = \lift(B,\theta_1)$ and $B$ is a cycle in $\dynN_{\phi(f)}$ this implies that all the $C_1,\ldots,C_k$ are pairwise disjoint.
Next we prove that augmenting along $C_1,\ldots,C_k$ yields a flow over time that is repeated during $[\theta_1+h_{\phi(f)}(B),\theta_2-h_{\phi(f)}(B)]$.
Let $a = (v_j,v_{j+1}) \in A(B)$ with $j \in \{1,\ldots,k-1\}$.
Since $v_1 = \low(B)$, we augment along the following copies of $a$,
\[
    a^{\theta_1+h_{\phi(f)}(B,v_j) + \mu} = (v_j^{\theta_1+h_{\phi(f)}(B,v_j) + \mu}, v_{j+1}^{\theta_1 +h_{\phi(f)}(B,v_j) + \tau(a) + \mu}) \quad \text{ with } \mu \in \{0, \ldots, \theta_2-\theta_1-h_{\phi(f)}(B)-1\}.
\]
In particular, we augment along all the liftings of $a$ in $\TENS{\theta_1+h_{\phi(f)}(B)}{\theta_2-h_{\phi(f)}(B)}_f$.
Since $f$ is repeated, this implies that the resulting flow over time is also repeated during $[\theta_1+h_{\phi(f)}(B), \theta_2-h_{\phi(f)}(B)]$.
It remains to prove that $\phi(f)+u(B)\cdot \chi^B = \phi( f + \sum_{i = 1}^k u(C_i)\cdot\chi^{C_i})$.
Let $a \in A(B)$. 
Then, using the definition of $\phi$ in~\eqref{eq:def_of_flow_projection}, we get
\begin{align*}
    (\phi(f)+u(B)\cdot \chi^B)(a) &= \phi(f)(a) + u(B) = f(a^{\theta_1+h_{\phi(f)}(B)}) + u(C_1) \\ 
    &= f(a^{\theta_1+h_{\phi(f)}(B)})+ \left(\sum_{i = 1}^k u(C_i)\cdot  \chi^{C_i}\right)(a^{\theta_1+h_{\phi(f)}(B)})\\
    &= \phi\left(f+\sum_{i=1}^k  u(C_i)\cdot  \chi^{C_i}\right) (a).
\end{align*}
In the third equality we exploit that $a^{\theta_1+h_{\phi(f)}(B)}$ is contained in $C_i$ for exactly one $i \in \{1,\ldots,k\}$.
If $a \in A \setminus A(B)$, we get
\begin{align*}
    (\phi(f)+u(B)\cdot \chi^B)(a) &= \phi(f)(a) = f(a^{\theta_1+h_{\phi(f)}(B)})+ \left(\sum_{i = 1}^k u(C_i)\cdot  \chi^{C_i}\right)(a^{\theta_1+h(C_1)})\\
    &= \phi\left(f+\sum_{i=1}^k  u(C_i)\cdot  \chi^{C_i}\right) (a).
\end{align*}

To prove part (2) of the lifting lemma let $B$ be an $s$-reachable negative cost cycle in $\dynN_{\phi(f)}(s)$.
The case that $B$ is a $t$-reachable cycle of negative cost can be proved by a symmetric argument.
Since $B$ is $s$-reachable, there is an $s$-$v_1$ path $P_1$  and an $v_1$-$s$ path $P_2$ in $\dynN_{\phi(f)}(s)$.
We first consider the case $\tau(B) > 0$.
Set $\theta_{\start} := \theta_1+|h^-_{\phi(f)}(P_1)|$, $r := \lfloor \nicefrac{(\theta_2-\theta_{\start}-\tau(P_1)-h^+(P_2)}{h_{\phi(f)}(B)} \rfloor-1$ and define
\begin{align*}
    W := P_1 \oplus \underbrace{B\oplus B \oplus \ldots \oplus B}_{r \text{ times}} \oplus P_2.
\end{align*}
By construction $W$ is a closed walk in $\dynN_{\phi(f)}$ that starts in $s$, traverses $P_1$, then traverses the cycle $B$ $r$ times and finally follows $P_2$ back towards $s$.
Also note that $h_{\phi(f)}(W) \leq \theta_2-h_{\phi(f)}(B)$.
Thus, we can define $k = h_{\phi(f)}(B)$ many pairwise disjoint liftings of $W$ in $\TENS{\theta_1}{\theta_2}_f$, $C'_i = \lift(W,\theta_1+i)$ with $i \in \{0,\ldots,h_{\phi(f)}(B)\}$.
Note that every $C'_i$ is a path in $\TENS{\theta_1}{\theta_2}_f$ from $s^{\theta_{\start}+i}$ to $s^{\theta''}$ with $\theta_{\start} < \theta''$.
Thus, by adding the super-source $\hat{s}$, we can complete $C'_i$ to a cycle $C_i$ in $\TENS{\theta_1}{\theta_2}_f$.
The cycles $C_1,\ldots,C_k$ are disjoint by construction and we have 
\[
    c(C_i) = c(P_1) + c(P_2) + r\cdot c(B) > -2\sum_{a \in A}|c(a)|u(a)+r\cdot c(B).
\]
With our assumption on $\theta_2-\theta_1$ in~\eqref{eq:bound_on_repeated_interval} we obtain with $|h^-_{\phi(f)}|, h^+_{\phi(f)} \leq \sum_{a \in A}\tau(a)$
\begin{align*}
    r > 2\sum_{a \in A}|c(a)|u(a)
\end{align*}
Since $c(B)\leq -1$ this implies $c(C_i)<0$ for all $i \in \{1,\ldots,k\}$.
%
%
%
By construction, when augmenting along $C_1,\ldots,C_k$, we augment along all the liftings of $B$ in $\TENS{\theta_{\start}+\tau(P_1)}{\theta_2-h^+_{\phi(f)}(P_2)-h_{\phi(f)}(B)}_f$.
The first part of this lemma shows that the resulting flow over time is repeated during 
$$[\theta_{\start} + \tau(P_1) + h_{\phi(f)}(B), \theta_2-h^+_{\phi(f)}(P_2)-2h_{\phi(f)}(B)] \supseteq  [\theta_1 + 3 \cdot \sum_{a \in A} \tau(a), \theta_2 - 3\cdot \sum_{a \in A} \tau(a)], $$
such that 
$\phi(f)+u(B)\cdot \chi^B = \phi( f + \sum_{i = 1}^k u(C_i)\cdot\chi^{C_i})$.
The case $\tau(B)<0$ can be handled by the same argument by turning around every arc in the networks and swapping the roles of $P_1$ and $P_2$.
\end{proof}
\section{Overview of a Construction}
The projection and lifting lemma allow us to present a formal description of our construction of a repeated maximum minimum-cost flow over time, see Construction~\ref{alg}.
Let $f_i$ be the flow over time computed in iteration $i$ of our construction.
We assume that $f_i$  is repeated during $[\theta_1^i,\theta_2^i]$ such that $\theta_2^i-\theta_1^i > 2J(h_i)$ with $h_i = \max\{\theta_1^i,\theta-\theta_2^i\}$. 
$J(x)$ is a function that will be formerly defined in~\eqref{eq:def_bound}.
Note that $f_0$ is a repeated flow over time by Lemma~\ref{lem:maxflow_induces_max_flow} with a repeated interval that is ``large enough'' by our assumption on $\theta$ in~\eqref{eq:bound_on_TH}.

Our construction consists of two parts which we call \hyperlink{link:Step11}{Step 1} and \hyperlink{link:Step2}{Step 2}.
The first part is further subdivided into two subparts \hyperlink{link:Step11}{Step 1.1} and \hyperlink{link:Step12}{Step 1.2}.
Step 1 heavily exploits the correspondence between negative cost cycles in $\dynN_{\phi(f_i)}$ and $\TENS{\theta_1^i}{\theta_2^i}_{f_i}$ given by the Lifting and Projection Lemma (Lemma~\ref{lem:projection_lemma} and Lemma~\ref{lem:lifing_lemma}).
Step 1.1 focuses on negative cost cycles $B$ in $\dynN_{\phi(f_i)}$ with $\tau(B) = 0$.
Consider iteration $i+1$ in which we visit Step 1.1.
We detect a cycle $B$ of negative cost with $\tau(B)=0$ and then augment along the liftings of $B$ in $\TENS{\theta_1^i}{\theta_2^i}$ according to part (1) of Lemma~\ref{lem:lifing_lemma}.
This results in a flow over time $f_{i+1}$ that is repeated during $[\theta_1^i+h_{\phi(f_i)}(B),\theta^i_2-h_{\phi(f_i)}(B)]$ with $h_{\phi(f_i)}(B) \leq \sum_{a \in A}\tau(a)$.
Let $j_1$ be the last iteration in which we apply Step 1.1.
By induction and our assumption on $\theta$ in~\eqref{eq:bound_on_TH} the flow over time $f_{j_1}$ is repeated and has the property that $\TENS{\theta_1^{j_1}}{\theta_2^{j_1}}_{f_{j_1}}$ does not contain a negative cost cycle $C$ with $\pi(C) \subseteq \overline{\dynN}_{\phi(f_{j_1})}$ (Lemma~\ref{lem1} part (a)).

Next let $i+1$ be an iteration in which Step 1.2 is applied.
In this step we proceed through augmentations according to part (2) of the Lifting Lemma (Lemma~\ref{lem:lifing_lemma}).
In particular, all the $s$-reachable or $t$-reachable cycles $B$ in $\dynN_{\phi(f_i)}$ with $c(B)<0$ are considered.
The analysis of our construction requires that we augment along an $s$- or $t$-reachable Eulerian subgraph of minimum cost first and minimum transit time secondly in every iteration of Step 1.2.
Suppose Step 1.2 terminates after iteration $j_2$.
Part (2) of Lemma~\ref{lem:lifing_lemma} implies that the flow over time $f_{j_2}$ is still repeated.

We deduce that Steps 1.1 and 1.2 of our construction are visited at most $2 \cdot \sum_{a \in A} u(a) \cdot c(a)$ many iterations, i.e., for a large enough time horizon $\theta$, in particular if $\theta$ fulfills the bound in~\eqref{eq:bound_on_TH}$,$ the flow over time $f_{j_2}$ is repeated during $[\theta_1^{j_2},\theta_2^{j_2}]_{f_{j_2}}$ (Lemma~\ref{lem1} part (b)).
We also prove that the network $\TENS{\theta_1^{j_2}}{\theta_2^{j_2}}$ does not contain any negative cost cycle (Lemma~\ref{lem1} part (b)).

In every iteration $i+1$ of our construction in which \hyperlink{link:Step2}{Step 2} is visited the flow over time $f_i$ is augmented along a negative cost cycle $C$ that lies in $\TENS{1}{\theta_2^i}_{f_i}$ or in $\TENS{\theta_1^i}{\theta}_{f_i}$. 
See Figure~\ref{fig:cycles1} for examples of such cycles.
We first show that the flow over time $f_{i+1}$ is repeated by proving that the negative cost cycle $C_{i+1}$ found in iteration $i+1$ has a bounded height (Lemma~\ref{lem:height_of_component_pos} and Lemma~\ref{lem:height_of_component_neg}) after potentially applying a compression procedure that is described in Appendix~\ref{SecA:Compression}.
In a nutshell this compression procedure removes cycles of positive cost that are traversed by $C_{i+1}$ in the repeated part of the time-expanded network such that the resulting cycle has at most the cost of $C_{i+1}$ and a bounded height.
We call a cycle $C$ that has been compressed by the compression procedure a \emph{compressed cycle}.
To make sure that the cycles along which we augment in every iteration of Step 2 cannot ``stack on top of each other'' to eventually result in a non-repeated flow over time, we perform further augmentations (see lines~\ref{alg:aug1} and~\ref{alg:aug2} of the construction).
Let $j_3$ be the last iteration in which Step 2 is visited.
The flow over time $f_{j_3}$ is a repeated maximum flow over time by construction.
To deduce that $f_{j_3}$ is of minimum cost, we prove that $\TEN{\theta}_{f_{j_3}}$ does not contain any cycle $C$ of negative cost that intersects with $\TENS{1}{\theta_1^i}_{f_i}$,  $\TENS{\theta_1^i}{\theta_2^i}_{f_i}$ \emph{and} $\TENS{\theta_1^i}{\theta_2^i}_{f_i}$ (Lemma~\ref{lem:big_pos_component} and Lemma~\ref{lem:big_neg_component}). See Figure~\ref{fig:cycles2} for visualizations of such cycles.
%
\begin{algorithm}[h!]
    \small
    \SetAlgorithmName{Construction}{}  
    \DontPrintSemicolon
    \KwIn{$\dynN = (D = (V,A),s,t,u,\tau)$, $c \in \Z^{A}$, $\theta\geq 0$}
    \KwOut{A maximum minimum-cost flow over time in $\TEN{\theta}$ with respect to $c$}
    $f_0 = $ maximum flow over time in $\TEN{\theta}$ from the algorithm of Ford and Fulkerson\;
    $i = 0$\;
    \hypertarget{link:Step11}{\tcc{Step 1.1}}
    \While{Exists a cycle $B$ in $\dynN_{\phi(f_i)}$ with $c(B)<0$ and  $\tau(B) = 0$}{
        Let $C_1,\ldots,C_k$ be the cycles in $\TENS{\theta_1^i}{\theta_2^i}_f$ corresponding to $B$ by Lemma~\ref{lem:lifing_lemma}\;
        $f_{i+1} = f_i + \sum_{j = 1}^k u(C_i)\cdot \chi^{C_i}$\;
        $i = i+1$\;
    }
    \hypertarget{link:Step12}{\tcc{Step 1.2}}
    \While{$\exists$ an $s$- or $t$-reachable cycle $B$ in $\dynN_{\phi(f_i)}$ with $c(B)<0$}{
        Let $H_i$ be an $s$- or $t$-reachable Eulerian subgraph of $\dynN_{\phi(f_i)}^{u = 1}$ with minimal cost and minimal transit time with respect to the first property\;
        Let $B_1,\ldots,B_\ell$ be the collection of arc-disjoint cycles decomposing $H_i$\; 
        $f_i^0 = f_i$\;
       \For{$j \in \{1,\ldots,\ell\}$}{
       \tcc{$f_i^j$ is repeated during $[\theta_1^{i_j},\theta_2^{i_j}]$}
            Let $C^j_1,\ldots,C^j_k$ be the cycle in $\TENS{\theta_1^{i_j}}{\theta_2^{i_j}}_f$ by Lemma~\ref{lem:lifing_lemma} corresponding to $B_j$\;
            $f^j_{i} = f^{j-1}_i + \sum_{j = 1}^k  \chi^{C_i}$\;
        }

        $f_{i+1} = f^{\ell}_i$\;
        update $\mathcal{B}$\;
        $i = i+1$\;
    }
    \hypertarget{link:Step2}{\tcc{Step 2}}
    Let $H_{i} = (\emptyset,\emptyset)$ be an empty graph\;
    \While{Exists a negative cost cycle $C$ in $(\TENS{1}{\theta_2^i}_{f_i})^{u=1}$ or in  $(\TENS{\theta_1^i}{\theta}_{f_i})^{u=1}$ }{
        Let $C_i$ be a compressed cycle of minimal cost of this form\;
        $f'_{i+1} = f_i + \chi^{C_i}$\;
        $H'_{i+1} = $ Eulerian graph induced by $(V(H_i) \cup V(C_i), A(H_i) \dot \cup A(C_i))$\;
        $K'_{i+1,1},\ldots,K'_{i+1,\ell} = $ decomposition of $H'_{i+1}$ into cycles\;
        $f_{i+1} = $ augment $f'_{i+1}$ along all $K'_{i+1,j}$ with $c(K'_{i+1,j}) = 0$ with $j \in \{1,\ldots, \ell\}$\;\label{alg:aug1}
        $H_{i+1} = $ remove every $K'_{i+1,j}$ with $c(K'_{i+1,j}) = 0$ from $H'_{i+1}$  and compress every $K'_{i+1,j}$ with $j \in \{1,\ldots,\ell\}$\;\label{alg:aug2}
        $i = i+1$\;
    }
    $f = f_{i-1}$\;
    \Return{f}
    \caption{Algorithm to compute a repeated minimum cost flow over time}
    \label{alg}
\end{algorithm}
\newpage
\section{Analysis of Step 1.1 and Step 1.2}
Suppose that iteration $j_1$ is the last iteration of Construction~\ref{alg} in which Step 1.1 is visited. Accordingly, $j_2$ is the last iteration in which Step 1.2 is visited.
Recall that we still assume that $\theta$ fulfills the bound in~\eqref{eq:bound_on_TH}.
The main result of this subsection is the following lemma.
\begin{lemma}\label{lem1}
    The following two statements hold:
        \begin{enumerate}[label = (\alph*)]
            \item The flow over time $f_{j_1}$ is repeated during $[\theta_1^{j_1},\theta_2^{j_1}]$ and there is no negative cost cycle $C$ in $\TENS{\theta_1^{j_1}}{\theta_2^{j_1}}_{f_{j_1}}$ such that $\pi(C) \subseteq \overline{\dynN}_{\phi(f_{j_1})}$.
            \item The flow over time $f_{j_2}$ is repeated during $[\theta_1^{j_2},\theta_2^{j_2}]$ with 
          \begin{align}\label{eq:height_non_repeated}
        \theta_1^{j_2} = \theta- \theta_2^{j_2} = 6 \cdot \left(\sum_{a \in A} u(a)|c(a)| \right)
        \cdot \left(\sum_{a \in A}\tau(a) \right),
     \end{align}
     and $\TENS{\theta_1^{j_2}}{\theta_2^{j_2}}_{f_{j_2}}$ does not contain a cycle of negative cost.
        \end{enumerate}

\end{lemma}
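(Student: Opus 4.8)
The plan is to prove both parts by induction on the iteration counter, tracking two invariants: that each $f_i$ is repeated during $[\theta_1^i, \theta_2^i]$ and that the repeated interval shrinks by only a bounded amount at each step, while the relevant family of negative cost cycles strictly decreases in some potential. First I would establish the key \emph{finiteness} statement: that Steps 1.1 and 1.2 together perform at most $2\sum_{a\in A}u(a)|c(a)|$ augmentations. This follows because each augmentation in Step 1.1 or Step 1.2 strictly decreases the cost of $\phi(f_i)$ as a static flow in $\dynN$ — indeed by part (c)(iii) of the Lifting Lemma we have $\phi(f_{i+1}) = \phi(f_i) + u(B)\cdot\chi^B$ with $c(B)<0$, so the static cost drops by at least $u(B)\ge 1$; since the static cost lies in the interval $[-\sum_a u(a)|c(a)|, \sum_a u(a)|c(a)|]$, there can be at most $2\sum_a u(a)|c(a)|$ such steps.

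For part (a): in each Step 1.1 iteration, the Lifting Lemma part (1) guarantees that augmenting along the liftings $C_1,\dots,C_k$ of a negative-cost cycle $B$ with $\tau(B)=0$ yields a flow over time repeated during $[\theta_1^i + h_{\phi(f_i)}(B), \theta_2^i - h_{\phi(f_i)}(B)]$, and $h_{\phi(f_i)}(B)\le\sum_a\tau(a)$. So after $j_1$ iterations the interval has shrunk by at most $2 j_1 \sum_a\tau(a) \le 4\left(\sum_a u(a)|c(a)|\right)\left(\sum_a\tau(a)\right)$ on each side; by the bound on $\theta$ in~\eqref{eq:bound_on_TH} the resulting interval is still nonempty and "large enough" in the sense of~\eqref{eq:bound_on_repeated_interval}, so $f_{j_1}$ is repeated during $[\theta_1^{j_1},\theta_2^{j_1}]$. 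For the second assertion of (a), suppose for contradiction that $\TENS{\theta_1^{j_1}}{\theta_2^{j_1}}_{f_{j_1}}$ contains a negative cost cycle $C$ with $\pi(C)\subseteq\overline{\dynN}_{\phi(f_{j_1})}$. By the Projection Lemma part (a), since $\pi(C)$ is contained in $\overline{\dynN}_{\phi(f_{j_1})}$ — a region disjoint from $\dynN_{\phi(f_{j_1})}(s)$ and hence containing no cycle $B'\subseteq\dynN_{\phi(f_{j_1})}(s)$ with $\tau(B')<0$ — and since $\tau(C)=0$ (a cycle within the repeated slab that projects entirely into the "between" region cannot have nonzero transit time, as it would have to exit via $\hat s$ or $\hat t$ whose projections $s,t$ are not in $\overline{\dynN}$), we extract a cycle $B$ in the decomposition of $\pi(C)$ with $c(B)<0$ and $\tau(B)=0$. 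But then the Step 1.1 loop condition is still satisfied at iteration $j_1$, contradicting the choice of $j_1$ as the last Step 1.1 iteration.

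For part (b): by the same bookkeeping, Step 1.2 adds at most $2\sum_a u(a)|c(a)|$ more iterations, and by the Lifting Lemma part (2)(ii) each such iteration (and each of the at most $|V|$ cycles $B_j$ decomposing the Eulerian subgraph $H_i$, since repeated components are bounded via Lemma~\ref{lem:num_of_connected_components}) costs at most $3\sum_a\tau(a)$ of repeated interval on each side. Combining Step 1.1 and Step 1.2 gives a total shrinkage bounded by $6\left(\sum_a u(a)|c(a)|\right)\left(\sum_a\tau(a)\right)$ on each side, which is exactly the value $\theta_1^{j_2} = \theta-\theta_2^{j_2}$ claimed in~\eqref{eq:height_non_repeated}. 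To see that $\TENS{\theta_1^{j_2}}{\theta_2^{j_2}}_{f_{j_2}}$ contains no negative cost cycle at all: any negative cost cycle $C$ in the repeated slab projects, by the Projection Lemma, to a collection of cycles in $\dynN_{\phi(f_{j_2})}$ one of which, $B$, has negative cost. If $\tau(B)=0$ we contradict termination of Step 1.1 (which, by part (a), has left no such cycle — and augmentations in Step 1.2 do not reintroduce $\tau=0$ negative cost cycles, by the invariant carried through Step 1.2). If $\tau(B)\neq0$, then by Observation~\ref{obs:disjoint} $B$ lies entirely in one of $\dynN_{\phi(f_{j_2})}(s)$, $\overline{\dynN}_{\phi(f_{j_2})}$, or $\dynN_{\phi(f_{j_2})}(t)$; the middle case is impossible for $\tau(B)\ne 0$ as in part (a), and in the remaining two cases the Projection Lemma parts (b),(c) together with the structure of $C$ (which must touch $\hat s$ or $\hat t$, or contain a reversed flow-carrying arc) show $B$ is $s$- or $t$-reachable, contradicting termination of Step 1.2.

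The main obstacle I expect is the verification in part (b) that Step 1.2 augmentations do not destroy the property established in part (a) — i.e., that no new negative-cost cycle $C$ with $\pi(C)\subseteq\overline{\dynN}$ appears. This requires using Lemma~\ref{lem:invariant_under_augmentations} to see that the partition $V(s)/\overline V/V(t)$ is stable under the cycle augmentations performed, together with the fact (from the $\tau=0$ case of the Lifting Lemma) that liftings of $\tau\ne 0$ cycles in $\dynN_{\phi(f)}(s)$ or $\dynN_{\phi(f)}(t)$ interact with the "between" region in a controlled way. Making this interaction precise — in particular checking that the Eulerian-subgraph choice (minimum cost, then minimum transit time) in Step 1.2 guarantees the invariant persists and that no $\tau=0$ negative cycle re-emerges — is the delicate part; the interval-shrinkage arithmetic and the finiteness count are routine once the projection/lifting correspondence is in hand.
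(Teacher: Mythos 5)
Your overall route is the same as the paper's: a potential argument on the static cost of $\phi(f_i)$ to bound the number of iterations of Steps 1.1 and 1.2 by $2\sum_{a\in A}u(a)|c(a)|$, the per-iteration interval shrinkage of at most $3\sum_{a\in A}\tau(a)$ per side from the Lifting Lemma to get~\eqref{eq:height_non_repeated}, and a contradiction via the Projection Lemma for the nonexistence claims. However, there is a genuine gap in your part (a). To apply the Projection Lemma part (a) you need that the decomposition of $\pi(C)$ contains \emph{no} cycle of negative transit time whatsoever — that is what its proof actually uses ($\tau(B_i)\geq 0$ for all $i$ forces $\tau(B_i)=0$ for all $i$ when $\tau(\pi(C))=0$); the restriction to cycles inside $\dynN_{\phi(f)}(s)$ in the lemma's statement is a misprint. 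Your justification (``$\overline{\dynN}_{\phi(f_{j_1})}$ is disjoint from $\dynN_{\phi(f_{j_1})}(s)$, hence contains no such cycle'') is vacuous under the correct hypothesis. What is really needed, and what you never establish, is the invariant that $\overline{\dynN}_{\phi(f_i)}$ contains no cycle of negative transit time \emph{after} each Step 1.1 augmentation. This is not automatic: augmenting along a cycle can in principle open up new residual arcs and hence new cycles. The paper closes this with Lemma~\ref{lem:augmentation_along_zero_transit_time} (augmenting along a $\tau=0$ cycle preserves the absence of negative-transit-time cycles, via an Eulerian cancellation argument combining the old and the putative new cycle) and propagates it inductively in Lemma~\ref{Cor:no_neg_transit_time_cycle}, starting from Lemma~\ref{lem:maxflow_induces_max_flow} for $f_0$. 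Without this invariant your contradiction in part (a) does not go through.

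On part (b), you correctly flag the delicate point — that Step 1.2 must not reintroduce $\tau=0$ negative-cost cycles or negative-cost cycles projecting into $\overline{\dynN}$ — but you leave it unresolved and gesture at Lemma~\ref{lem:invariant_under_augmentations} and the Eulerian-subgraph tie-breaking. The paper's actual resolution of the first half is more direct: every cycle augmented in Step 1.2 lies entirely in $\dynN_{\phi(f_i)}(s)$ or $\dynN_{\phi(f_i)}(t)$, so $\overline{\dynN}_{\phi(f_{i+1})}=\overline{\dynN}_{\phi(f_i)}$ as networks and the part (a) invariant persists verbatim. One further small imprecision: your claim that each Step 1.2 iteration drops the static cost by at least $1$ needs the argument of Lemma~\ref{lem:Step12}(2) — that the minimum-cost Eulerian subgraph $H_{i+1}$ decomposes into cycles of nonpositive cost (else remove the positive one) at least one of which is strictly negative (since a reachable negative cycle exists) — because the augmentation is along a whole Eulerian subgraph, not a single cycle $B$.
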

From the first part of Lemma~\ref{lem:lifing_lemma} it follows that
 the flow over time $f_{j_1}$ is repeated.
It remains to prove the second statement of Lemma~\ref{lem1} (a).
%
%
Note that $\dynN_{\phi(f_0)}$ does not contain any cycles with negative transit time.
Let $0<i+1\leq j_1$ be an iteration in which Step 1.1 is visited.
In such an iteration we only augment along liftings of cycles $B$ in $\dynN_{\phi(f_i)}$ with $\tau(B) = 0$.
The following lemma implies that after such an augmentation the resulting residual network $\dynN_{\phi(f_{i+1})}$ still does not contain any cycle with negative transit time.
\begin{lemma}\label{lem:augmentation_along_zero_transit_time}
    Let $x$ be a static maximum $s$-$t$ flow in $\dynN$.
    Assume that ${\dynN}^{u=1}_{x}$ does not contain a cycle $B'$ with $\tau(B') < 0$.
    Let $B$ be a cycle in ${\dynN}^{u=1}_x$ with $\tau(B) = 0$ and let $x':= x+\chi^B$ be the flow obtained by augmenting $x$ along $B$.
    Then ${\dynN}_{x'}$ also does not contain a cycle $B'$ with $\tau(B') < 0$. 
\end{lemma}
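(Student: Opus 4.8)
The statement is a classical fact about residual networks: augmenting along a zero-cost (here zero-transit-time) cycle cannot create a negative-cost (negative-transit-time) cycle. I would argue by contradiction. Suppose ${\dynN}_{x'}$ contains a cycle $B'$ with $\tau(B') < 0$. The key observation is that every arc of $\dynN^{u=1}_{x'}$ is either an arc of $\dynN^{u=1}_{x}$, or the reverse $\cev{a}$ of an arc $a \in A(B)$ along which $x$ was pushed (these reverses may now have positive residual capacity even if they did not before). So I would consider the union of the arc sets of $B'$ and of $B$ (the latter taken in $\dynN^{u=1}_x$), with the arcs of $B'$ that are reverses of arcs of $B$ cancelling against the corresponding forward arcs of $B$. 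Concretely: form the multiset of arcs $A(B') \uplus A(B)$ in $\dynN^{u=1}_x$ after replacing each reverse arc $\cev{a} \in A(B')$ with $a \in A(B)$ by "cancellation" — i.e. delete one copy of $a$ from the $B$-part and the $\cev{a}$ from the $B'$-part.

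\textbf{Key steps, in order.} First, set up the cancellation formally: let $\chi := \chi^{B'} + \chi^{B}$ viewed as a vector on arcs of $\dynN^{u=1}_x \cup \cev{(\cdot)}$, and observe that after cancelling each matched pair $a/\cev{a}$ the resulting vector $\chi^{\mathrm{res}}$ is a nonnegative integer combination of arcs that \emph{all lie in} $\dynN^{u=1}_x$ — this uses that the only arcs of $\dynN_{x'}$ not in $\dynN_x$ are reverses of $B$-arcs, and $B$ supplies exactly one unit of forward capacity on each such arc to cancel against. Second, check that $\chi^{\mathrm{res}}$ is the incidence vector of an \emph{Eulerian} (balanced) subgraph of $\dynN^{u=1}_x$: both $\chi^{B'}$ and $\chi^{B}$ are balanced at every vertex (each is a cycle), and cancelling a pair $a, \cev{a}$ preserves balance, so $\chi^{\mathrm{res}}$ is balanced. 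Third, apply Lemma~\ref{lem:Eulerian_Subgraph_Decomp} to decompose $\chi^{\mathrm{res}}$ into cycles $D_1,\dots,D_m$ in $\dynN^{u=1}_x$. Fourth, compute transit time: since cancelled pairs contribute $\tau(a) + \tau(\cev{a}) = 0$, we get $\sum_{j} \tau(D_j) = \tau(B') + \tau(B) = \tau(B') + 0 = \tau(B') < 0$, so some $D_j$ has $\tau(D_j) < 0$ and lies in $\dynN^{u=1}_x$, contradicting the hypothesis. Finally, I should double-check the degenerate case where $\chi^{\mathrm{res}}$ is empty (i.e. $B'$ was entirely built from reverses of $B$): then $\tau(B') = -\tau(B) = 0$, contradicting $\tau(B')<0$, so this case does not arise.

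\textbf{Anticipated main obstacle.} The bookkeeping in the cancellation step is where care is needed: one must be precise that, working in the unit-capacity network $\dynN^{u=1}_x$, each arc $a$ of $B$ is a distinct unit-capacity copy, and after augmenting, the reverse of exactly that copy becomes available in $\dynN_{x'}^{u=1}$ with capacity one; so when $B'$ uses such a reverse, there is precisely one forward copy in $B$ to cancel it with, and no capacity is "over-counted". A subtle point: $B'$ might use a reverse arc $\cev{a}$ for some $a$ \emph{not} on $B$ but such that $x(a) > 0$ already — but then $\cev{a}$ was already present in $\dynN_x^{u=1}$, so no cancellation is needed and that arc simply stays in $\chi^{\mathrm{res}}$. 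Making the case distinction (arc of $\dynN_x^{u=1}$ already present vs. newly created reverse of a $B$-arc) airtight, and confirming that the newly created reverses are \emph{exactly} $\{\cev{a} : a \in A(B)\}$ and each with multiplicity one, is the crux; everything after that is the standard "sum of transit times over a cycle decomposition" argument.
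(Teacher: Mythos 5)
Your proposal is correct and follows essentially the same route as the paper: form the Eulerian multigraph from $A(B)\,\dot\cup\,A(B')$, cancel opposite arc pairs so that the result lies in $\dynN^{u=1}_x$, decompose it into cycles via Lemma~\ref{lem:Eulerian_Subgraph_Decomp}, and derive the contradiction $0>\tau(B')=\tau(B')+\tau(B)=\sum_j\tau(D_j)\geq 0$. Your treatment is in fact slightly more careful than the paper's (explicit unit-capacity bookkeeping and the degenerate all-cancelled case), but it is the same argument.
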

\begin{proof}
    Aiming for a contradiction, suppose that ${\dynN}^{u=1}_{x'}$ does contain a cycle $B'$ with $\tau(B')<0$.
    Since ${\dynN}^{u=1}_{x}$ does not contain a cycle with negative transit time, the cycle $B'$ is induced by augmenting along $B$.

    We now consider the Eulerian graph $H$ induced by $(V(B) \cup V(B'), A(B) \dot\cup A(B'))$ by removing opposite arcs. Arcs that occur in both cycles are counted twice.
    Note that $H$ is a subgraph of ${\dynN}^{u=1}_x$.
    Since $H$ is Eulerian, it can be decomposed into cycles $B_1,\ldots, B_\ell$ in ${\dynN}^{u=1}_x$ such that $0 > \tau(B') = \tau(B') + \tau(B) = \tau(H) =  \sum_{i = 1}^{\ell} \tau(B_i) \geq 0$,
    a contradiction.
\end{proof}

\begin{lemma}\label{Cor:no_neg_transit_time_cycle}
    Let $i+1$ be an iteration in which Step 1.1 or Step 1.2 is visited.
    Then $\overline{\dynN}_{\phi(f_{i+1})}$ does not contain a cycle $B$ with $\tau(B) < 0$.
\end{lemma}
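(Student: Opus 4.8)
The plan is to prove Lemma~\ref{Cor:no_neg_transit_time_cycle} by induction on the iterations of Steps 1.1 and 1.2, using Lemma~\ref{lem:augmentation_along_zero_transit_time} as the workhorse for Step 1.1 and a variant of the same Eulerian-decomposition argument for Step 1.2. First I would establish the base case: by Lemma~\ref{lem:maxflow_induces_max_flow}, the initial flow over time $f_0$ satisfies that $\dynN_{\phi(f_0)}$ contains no cycle $C$ with $\tau(C)<0$ at all, hence in particular $\overline{\dynN}_{\phi(f_0)}$ contains no such cycle. The induction hypothesis I would carry is the slightly stronger statement that $\dynN_{\phi(f_i)}$ (not merely the restriction $\overline{\dynN}_{\phi(f_i)}$) contains no negative-transit-time cycle, since that is what Lemma~\ref{lem:augmentation_along_zero_transit_time} needs as input, and it is also what that lemma produces as output.

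For an iteration $i+1$ in which Step 1.1 is visited, the flow $\phi(f_{i+1})$ is obtained from $\phi(f_i)$ by augmenting along a cycle $B$ in $\dynN_{\phi(f_i)}$ with $\tau(B)=0$; more precisely, by part (1c) of Lemma~\ref{lem:lifing_lemma} we have $\phi(f_{i+1}) = \phi(f_i) + u(B)\cdot\chi^B$, which (after passing to the unit-capacity network $\dynN^{u=1}$) is a sequence of unit augmentations along $B$. Applying Lemma~\ref{lem:augmentation_along_zero_transit_time} once for each unit of $u(B)$ — each time the hypothesis ``no negative-transit-time cycle'' is preserved, so it remains valid as input for the next unit — yields that $\dynN_{\phi(f_{i+1})}$ contains no negative-transit-time cycle. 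Restricting to $\overline{\dynN}_{\phi(f_{i+1})}$ gives the claim for Step 1.1.

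For an iteration $i+1$ in which Step 1.2 is visited, the situation is different because the cycles $B_j$ decomposing the Eulerian subgraph $H_i$ may have $\tau(B_j)\neq 0$. Here I would argue directly about $\overline{\dynN}$: by Lemma~\ref{lem:invariant_under_augmentations}, augmenting $\phi(f_i)$ along cycles preserves $V_x(s)$, $V_x(t)$, and $\overline{V}_x$, so $\overline{\dynN}_{\phi(f_{i+1})}$ is induced by the same node set $\overline{V}$ as $\overline{\dynN}_{\phi(f_i)}$. The key point is that in Step 1.2 we only augment along $s$-reachable or $t$-reachable cycles, i.e.\ cycles $B_j$ with $B_j \subseteq \dynN_{\phi(f_i)}(s)$ or $B_j\subseteq\dynN_{\phi(f_i)}(t)$; these are arc-disjoint from $\overline{\dynN}_{\phi(f_i)}$ by Observation~\ref{obs:disjoint}. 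Hence augmenting along them changes neither the arcs nor the residual capacities \emph{inside} the induced subnetwork on $\overline{V}$, so $\overline{\dynN}_{\phi(f_{i+1})} = \overline{\dynN}_{\phi(f_i)}$, and the absence of a negative-transit-time cycle is inherited verbatim from the induction hypothesis (which holds since iteration $i+1$ is reached only after Steps 1.1 have finished, at which point $\dynN_{\phi(f_{j_1})}$ — and a fortiori every $\overline{\dynN}_{\phi(f_i)}$ for $i\geq j_1$ — is negative-transit-time-free).

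The main obstacle I anticipate is the Step 1.2 part: one must be careful that ``augment along $H_i$'' really decomposes into augmentations along cycles that are each entirely contained in $\dynN_{\phi(f_i)}(s)$ or in $\dynN_{\phi(f_i)}(t)$, so that none of them touches an arc with both endpoints in $\overline{V}$, and that the residual capacities of such arcs are therefore genuinely untouched. This requires checking that the Eulerian subgraph $H_i$ chosen in the construction — being $s$- or $t$-reachable — lies inside $\dynN_{\phi(f_i)}(s)\cup\dynN_{\phi(f_i)}(t)$, and combining this with Observation~\ref{obs:disjoint} to conclude arc-disjointness from $\overline{\dynN}$. The Step 1.1 part is essentially a clean iterated application of Lemma~\ref{lem:augmentation_along_zero_transit_time} and should present no difficulty.
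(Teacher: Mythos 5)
Your proof is correct and follows essentially the same route as the paper: induction with base case from Lemma~\ref{lem:maxflow_induces_max_flow}, Lemma~\ref{lem:augmentation_along_zero_transit_time} for Step 1.1 iterations, and the observation that Step 1.2 augmentations live in $\dynN_{\phi(f_i)}(s)\cup\dynN_{\phi(f_i)}(t)$ and hence leave $\overline{\dynN}_{\phi(f_i)}$ untouched. Your version is in fact slightly more careful than the paper's, since you make explicit the stronger induction hypothesis (negative-transit-time-freeness of all of $\dynN_{\phi(f_i)}$ during Step 1.1) that Lemma~\ref{lem:augmentation_along_zero_transit_time} actually requires as input.
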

\begin{proof}
    We prove this lemma by induction. 
    For $i = 0$ the statement follows from Lemma~\ref{lem:maxflow_induces_max_flow}. 
    Let $i+1>0$ be some iteration in which Step 1.1 is visited.
    We obtain $f_{i+1}$ from $f_{i}$ by detecting a negative cost cycle $B$ in $\dynN_{\phi(f_{i})}$ with $\tau(B) = 0$ and then augmenting along the cycles $C_1,\ldots,C_k$ in $\TENS{\theta_1^{i}}{\theta^{i}_2}_{f_{i}}$ corresponding to $B$ according to part 1 of Lemma~\ref{lem:lifing_lemma}.
    In particular, the lifting lemma implies $\phi(f_{i})+u(B)\cdot \chi^B = \phi( f_{i} + \sum_{j = 1}^k u(C_j)\cdot\chi^{C_j}) = \phi(f_{i+1})$, i.e., $\phi(f_{i+1})$ is obtained by augmenting $\phi(f_{i})$ along $B$.
    Since $\tau(B) = 0$, Lemma~\ref{lem:augmentation_along_zero_transit_time} implies that $\overline{\dynN}_{\phi(f_{i+1})}$ does not contain a cycle with negative transit time.

    Next suppose that $i+1$ is an iteration in which Step 1.2 is visited.
    In this case $\dynN_{\phi(f_{i+1})}$ is obtained from $\dynN_{\phi(f_{i})}$ by augmenting along cycles in $\dynN_{\phi(f_{i})}(s)$ or  $\dynN_{\phi(f_{i})}(t)$.
    Thus, $\overline{\dynN}_{\phi(f_{i+1})} = \overline{\dynN}_{\phi(f_{i})}$.
    By our previous arguments, $\overline{\dynN}_{\phi(f_{i+1})}$ also does not contain any cycles $B$ with $\tau(B)<0.$
\end{proof}

We are now prepared to prove Lemma~\ref{lem1} part $(a)$.
\begin{proof}[Proof of Lemma~\ref{lem1} part $(a)$]
        By construction, after Step 1.1 the network $\dynN_{\phi(f_{j_1})}$ does not contain a cycle $B$ with $c(B) < 0$ and $\tau(B) = 0$.
        For the purpose of deriving a contradiction, assume that we are in an iteration $i+1$ in which Step 1.1 is visited and that $C$ is a cycle in $\TENS{\theta_1^i}{\theta_2^i}_{f_i}$ with $\pi(C)\subseteq \overline{\dynN}_{\phi(f_i)}$ and $c(C) < 0$.
        Define $W := \pi(C)$.
        Since $W \subseteq \overline{\dynN}_{\phi(f_i)}$, we have that $s,t \not \in V(W)$ and thus $\hat{s},\hat{t} \not \in V(C)$.
         This implies that $\tau(C) = 0$.
        By Lemma~\ref{Cor:no_neg_transit_time_cycle}, there is no cycle $B'$ in $\overline{\dynN}_{\phi(f_i)}$ with $\tau(B') < 0$.
        We can thus apply the second part of the Projection Lemma (Lemma~\ref{lem:projection_lemma}) to deduce that there exists a negative cost cycle $B$ in $\dynN_{\phi(f_i)}$ with $\tau(B) = 0$, a contradiction.   
\end{proof}
We next develop the tools to prove part (b) of Lemma~\ref{lem1}.
\begin{lemma}\label{lem:Step12}
    Let $i+1$ be an iteration in which Step 1.2 is visited.
    \begin{enumerate}[label = (\arabic*)]
        \item Let $C$ be a negative cost cycle in $\TENS{\theta_1^i}{\theta_2^i}_{f_i}$.
        Then there exists a cycle $B$ in the decomposition of $\pi(C)$ with $c(B) < 0$ that is $s$- or $t$-reachable.
        \item Let $H_{i+1}$ be the Eulerian graph chosen in this iteration.
        Then every cycle $B$ from the decomposition of $H_{i+1}$ into arc-disjoint cycles fulfills $c(B) \leq 0$. 
        Further there is at least one cycle $B$ in the decomposition of $H_{i+1}$ with $c(B) <0$.
    \end{enumerate}
\end{lemma}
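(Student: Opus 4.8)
For part~(1) I would feed $C$ directly into the Projection Lemma (Lemma~\ref{lem:projection_lemma}), which already yields a cycle $B$ in the decomposition of $\pi(C)$ with $c(B)<0$, so the only remaining issue is reachability. If $\hat s\in V(C)$ or $\hat t\in V(C)$, part~(b) of the Projection Lemma says that every cycle in the decomposition of $\pi(C)$ is $s$- or $t$-reachable, and in particular $B$ is, so we are done. Hence assume $\hat s,\hat t\notin V(C)$; then, exactly as in the proof of Lemma~\ref{lem1}(a), $\tau(C)=0$, so $\tau(\pi(C))=0$ as well. Since $\phi(f_i)$ is a maximum $s$-$t$ flow, there is no residual arc from $V_{\phi(f_i)}(s)$ to its complement and no residual arc from the complement of $V_{\phi(f_i)}(t)$ into $V_{\phi(f_i)}(t)$; consequently every \emph{closed} walk in $\dynN_{\phi(f_i)}$ lies entirely in one of $\dynN_{\phi(f_i)}(s)$, $\overline{\dynN}_{\phi(f_i)}$, $\dynN_{\phi(f_i)}(t)$, and in particular so does $\pi(C)$.

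Next I would exclude the case $\pi(C)\subseteq\overline{\dynN}_{\phi(f_i)}$: then every cycle in the decomposition of $\pi(C)$ lies in $\overline{\dynN}_{\phi(f_i)}$, hence has nonnegative transit time by Lemma~\ref{Cor:no_neg_transit_time_cycle}, and since these transit times sum to $\tau(C)=0$ they all vanish, so one of them is a zero-transit-time negative-cost cycle in $\overline{\dynN}_{\phi(f_i)}$. But every augmentation of Step~1.2 is carried out along $s$- or $t$-reachable cycles, which are contained in $V(s)$ or $V(t)$, so by Lemma~\ref{lem:invariant_under_augmentations} the induced subnetwork $\overline{\dynN}_{\phi(f_i)}$ has not changed since the end of Step~1.1; and $\dynN_{\phi(f_{j_1})}$ contains no zero-transit-time negative-cost cycle, since Step~1.1 terminated — a contradiction. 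Therefore $\pi(C)\subseteq\dynN_{\phi(f_i)}(s)$ or $\pi(C)\subseteq\dynN_{\phi(f_i)}(t)$; say the former. It remains to upgrade ``$B\subseteq\dynN_{\phi(f_i)}(s)$'' to ``$B$ is $s$-reachable'', and here I would invoke part~(c) of the Projection Lemma: if $C$ uses a residual backward arc induced by an $\hat s$-$\hat t$ path along which $f_i$ sends flow, then, since $\pi(C)\subseteq\dynN_{\phi(f_i)}(s)$, every cycle in the decomposition of $\pi(C)$ — in particular $B$ — is $s$-reachable. \textbf{Exhibiting such a backward arc is the step I expect to be the main obstacle.} Indeed $C$ is not the lift of a single cycle of $\dynN_{\phi(f_i)}$ and must descend somewhere, so it does use residual backward arcs $\cev{a^{\theta'}}$ (meaning $f_i$ carries flow on the copy $a^{\theta'}$), but one has to argue — from the path/cycle structure of the repeated flow $f_i$ produced by the earlier steps — that $C$ can be taken to use one lying on an $\hat s$-$\hat t$ flow path rather than only on a flow cycle; the degenerate possibility that $C$ stays inside a single layer and uses no backward arc reduces, by the same contradiction with the termination of Step~1.1, to checking that a zero-transit-time negative-cost cycle surviving into an iteration of Step~1.2 is already $s$- or $t$-reachable, which I would derive from the minimality built into the Step~1.2 choice of $H_{i+1}$.

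For part~(2), write $H:=H_{i+1}$ and let $B_1,\dots,B_\ell$ be its decomposition into arc-disjoint cycles. The while-condition under which this iteration of Step~1.2 is entered furnishes an $s$- or $t$-reachable cycle $B_0$ in $\dynN_{\phi(f_i)}$ with $c(B_0)<0$; regarded as a single-cycle Eulerian subgraph it competes with $H$ in the minimization defining $H$, so $c(H)\le c(B_0)<0$, and hence at least one $B_j$ satisfies $c(B_j)<0$. Now suppose, for contradiction, that $c(B_{j_0})>0$ for some $j_0$. Removing $A(B_{j_0})$ from $H$ leaves an Eulerian graph $H'$ whose remaining decomposition cycles $B_j$ ($j\neq j_0$) are still $s$- or $t$-reachable in $\dynN_{\phi(f_i)}$, so $H'$ is again an admissible $s$- or $t$-reachable Eulerian subgraph; it is nonempty since $c(H')=c(H)-c(B_{j_0})<c(H)<0$. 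Thus $H'$ has strictly smaller cost than $H$, contradicting the minimality of the cost of $H$. Hence $c(B_j)\le 0$ for every $j$, which together with the previous observation completes the proof of part~(2).
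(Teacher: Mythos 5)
Your proposal follows essentially the same route as the paper for both parts. For part (1) the paper likewise splits on whether $\hat s$ or $\hat t$ lies on $C$, uses Lemma~\ref{lem1} and Observation~\ref{obs:disjoint} to place $\pi(C)$ in $\dynN_{\phi(f_i)}(s)$ or $\dynN_{\phi(f_i)}(t)$ (your explicit remark that Step~1.2 augmentations leave $\overline{\dynN}_{\phi(f_i)}$ unchanged is a slightly more careful justification of the same fact, which the paper also records in Lemma~\ref{Cor:no_neg_transit_time_cycle}), and rules out the all-zero-transit-time case via Projection Lemma part~(a) and the termination of Step~1.1; for part (2) the remove-a-positive-cycle argument and the use of the while-condition to force $c(H_{i+1})<0$ are identical. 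The step you flag as the main obstacle --- exhibiting a backward arc of $C$ induced by an $\hat s$-$\hat t$ flow path so that part~(c) of the Projection Lemma applies --- is dealt with in the paper only implicitly: having deduced that the decomposition of $\pi(C)$ contains a cycle of strictly negative transit time, the paper immediately invokes part~(c) without verifying its hypothesis that the relevant residual backward arc comes from a flow-carrying $\hat s$-$\hat t$ path rather than from a flow-carrying cycle. So your concern identifies a point the paper glosses over rather than a defect specific to your argument; otherwise the two proofs coincide.
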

\begin{proof}
    Let $C$ be a negative cost cycle in~$\TENS{\theta_1^i}{\theta_2^i}_{f_i}$.
    By Lemma~\ref{lem1}, the cycle $\pi(C)$ is not contained in $\overline{\dynN}_{\phi(f_{i})}$.
    Thus, by Observation~\ref{obs:disjoint}, the cycle $\pi(C)$ either lies in $\dynN_{\phi(j_i)}(s)$ or $\dynN_{\phi(j_i)}(t)$.

    Suppose that $\hat{s},\hat{t} \not \in V(C)$.
    Then $\tau(C) = 0$.
    Suppose that the decomposition of $\pi(C)$ does not contain a cycle $B$ with $\tau(B) < 0$.
    Then the first part of the Projection Lemma (Lemma~\ref{lem:projection_lemma}) implies that there is a negative cost cycle $B$ in $\dynN_{\phi(f_i)}$ such that $\tau(B) = 0$.
    However, after Step 1.1 such a cycle $B$ does not exist in $\dynN_{\phi(f_i)}$, a contradiction.
    Thus, there exists a cycle $B$ in the decomposition of $\pi(C)$ with $\tau(B)<0$ and part (c) of the Projection Lemma (Lemma~\ref{lem:projection_lemma}) implies that all the cycles in the decomposition of $\pi(C)$ are either $s$- or $t$-reachable.
    If $\hat{s} \in V(C)$ or $\hat{t} \in V(C)$, then part (b) of the Projection Lemma (Lemma~\ref{lem:projection_lemma}) implies that all the cycles in the decomposition of $\pi(C)$ are either $s$- or $t$-reachable.
    This finishes the proof of part (1) of this lemma.

    We proceed by proving part (2).
   If $c(B) >0$, then we can remove $B$ from $H_{i+1}$ which would yield an Eulerian subgraph with less cost, a contradiction.
    To prove that there is a cycle $B$ in the decomposition of $H_{i+1}$ with $c(B) <0$ note that if Step 1.2 is visited in iteration $i+1$, then there exists an $s$- or $t$-reachable cycle of negative cost in $\dynN_{\phi(f_i)}$. 
    This yields that $c(H_{i+1})<0$ and thus there exists a cycle $B$  in the decomposition of $H_{i+1}$ with $c(B) <0$.   
\end{proof}



We are now ready to give a proof of part $(b)$ of Lemma~\ref{lem1}.
\begin{proof}[Proof of Lemma~\ref{lem1} part $(b)$]
    With the purpose of deriving a contradiction suppose that $C$ is a negative cost cycle in $\TENS{\theta_1^{j_2}}{\theta_2^{j_2}}_{f_{j_2}}$.
    By Lemma~\ref{lem:Step12}, the decomposition of the walk $\pi(C)$ in $\dynN_{\phi(f_i)}$ according to Lemma~\ref{lem:decomposition_of_walk} contains an $s$- or $t$-reachable cycle $B$ with $c(B)<0$.
    This contradicts the fact that by construction the network $\dynN_{\phi(f_i)}$ does not contain such a cycle.
    Thus, after Step 1.2 the network  $\TENS{\theta_1^{j_2}}{\theta_2^{j_2}}_{f_{j_2}}$ does not contain a negative cost cycle.

    %
    By the second part of Lemma~\ref{lem:lifing_lemma} and Lemma~\ref{lem1}, the flow over time $f_{j_2}$ is repeated.
    Every iteration $i+1$ in which Step 1.1 or Step 1.2 is visited corresponds to at least one augmentation along a negative cost cycle in $\dynN_{\phi(f_{i})}$.
    With Lemma~\ref{lem:Step12} part (2) such an augmentation reduces the cost of the flow $\phi(f_{i})$ by at least $1$.
    The cost of a maximum minimum-cost flow in $\dynN$ is in $[-\sum_{a \in A} |c(a)| u(a), \sum_{a \in A} |c(a)| u(a)]$.
    Thus, Step 1 is visited at most $2\sum_{a \in A} |c(a)| u(a)$ times.
    The maximum flow over time $f_0$ is repeated during $[\theta_1^0,\theta_2^0]$ with $\theta^0_1 = \theta- \theta_2^0 = \sum_{a \in A}\tau(a)$.
    In the worst case an iteration $i+1$ in which Step 1 is visited increases $\theta_1^{i}$ by $3\sum_{a \in A} \tau(a)$ and decreases $\theta_2^{i}$ by $3\sum_{a \in A} \tau(a)$ (Lemma~\ref{lem:lifing_lemma}).
    Hence, $f_{j_2}$ is repeated during $[\theta_1^{j_2},\theta_2^{j_2}]$ with 
    $$\theta_1^{j_2} = \theta- \theta_2^{j_2} = 6 \cdot \left(\sum_{a \in A} u(a)|c(a)| \right)
 \cdot \left(\sum_{a \in A}\tau(a) \right).$$
 Note that if $\theta$ fulfills the bound in~\eqref{eq:bound_on_TH},  $[\theta_1^{j_2},\theta_2^{j_2}]$ is actually a repeated interval that lies within $[1,\theta]$.
 \end{proof}
We complete this section by presenting  properties that all flows over time computed during Steps 1.1 and 1.2 maintain.
These properties will play a key role for the analysis of Step 2.

By $\cut(\dynN)$ we denote all the arcs of $\dynN$ that are contained in at least one minimum $s$-$t$ cut of $\dynN$ and we define $\lift(\cut(\dynN)) := \{a^{\theta'}: \theta'
 \in \{1,\ldots,\theta\} \text{ and } a \in \cut(\dynN)\}$.
Suppose that $j_2$ is the last iteration in which Step 1.2 is visited.
\begin{lemma}\label{lem:properties}
    \begin{enumerate}[label = (\alph*)]
    The following properties hold while performing Step 1.1 and Step 1.2 of our construction.
        \item Let $a \in \cut(\dynN)$ such that $a^{\theta'}$ with $\theta' \in [1,\theta)$ is not fully congested in $f_i$ for some $i \in \{0,\ldots,j_2\}$ and suppose that $a \in \cut(\dynN)$.
        %
        Then $a^{\theta'}$ is not fully congested in $f_i$ for all $i \in \{0,\ldots,j_2\}$.
        \item The networks $\dynN_{\phi(f_{j_2})}(s)$ and $\dynN_{\phi(f_{j_2})}(t)$ do not contain an $s$- or $t$-reachable cycle $B$ with $\tau(B)<0$ and $c(B) = 0$.   
    \end{enumerate}
\end{lemma}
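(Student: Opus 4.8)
\emph{Part (a).} I would prove the stronger claim that, for $a\in\cut(\dynN)$, the value $f_i(a^{\theta'})$ does not depend on $i\in\{0,\dots,j_2\}$. The engine is a static fact about maximum flows: if $x$ is a maximum $s$-$t$ flow in $\dynN$ and $a\in\cut(\dynN)$, then no cycle of $\dynN_x$ traverses $a$ or $\cev a$. Indeed, pick a minimum cut $(\bar S,\bar T)$ crossed by $a$; maximality of $x$ makes every arc leaving $\bar S$ saturated and every arc entering $\bar S$ flow-free, so $\dynN_x$ contains no arc leaving $\bar S$; then $a$ is absent (saturated) and $\cev a$ points into $\bar S$ with no residual way out, so it lies on no cycle. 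Now observe that throughout Steps~1.1 and~1.2 each $f_i$ is a maximum flow over time and $\phi(f_i)$ a maximum static flow (augmenting along cycles preserves values, and $\phi(f_0)$ is maximum by Lemma~\ref{lem:maxflow_induces_max_flow}), and that every augmentation is performed along liftings of cycles or closed walks of $\dynN_{\phi(f_i)}$, in Step~1.2 possibly completed through $\hat s$ or $\hat t$. Any such closed walk decomposes into cycles of $\dynN_{\phi(f_i)}$, hence avoids $\cut(\dynN)\cup\{\cev a:a\in\cut(\dynN)\}$ by the observation; so its liftings avoid $\lift(\cut(\dynN))$ and the reverses of those arcs, the only remaining arcs used being incident to $\hat s$ or $\hat t$. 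Therefore no augmentation alters $f_i(a^{\theta'})$ for $a\in\cut(\dynN)$, which proves part~(a).

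\emph{Part (b).} If Step~1.2 is never entered then $f_{j_2}=f_{j_1}$, and since every Step~1.1 augmentation amounts to augmenting $\phi$ along a transit-time-zero cycle and $\dynN_{\phi(f_0)}$ has no negative-transit-time cycle (Lemma~\ref{lem:maxflow_induces_max_flow}), iterating Lemma~\ref{lem:augmentation_along_zero_transit_time} shows $\dynN_{\phi(f_{j_1})}$ has no negative-transit-time cycle at all, a fortiori none that is $s$- or $t$-reachable with cost $0$. Otherwise, let $f'$ be the flow at the beginning of the last Step~1.2 iteration and $H^*$ the $s$- or $t$-reachable Eulerian subgraph of $\dynN_{\phi(f')}^{u=1}$ of minimum cost and, among those, minimum transit time chosen there, so that $\phi(f_{j_2})=\phi(f')+\chi^{H^*}$ and $c(H^*)<0$ (Lemma~\ref{lem:Step12}(2)). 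Suppose for contradiction $\dynN_{\phi(f_{j_2})}$ contains an $s$-reachable cycle $B^*$ with $c(B^*)=0$ and $\tau(B^*)<0$ (the $t$-reachable case is symmetric). Then $x'':=\phi(f_{j_2})+\chi^{B^*}$ is a feasible static flow of the same value as $\phi(f')$, so the circulation $x''-\phi(f')$, routed in $\dynN_{\phi(f')}$, is an Eulerian subgraph $\tilde H$ of $\dynN_{\phi(f')}^{u=1}$; since $c(a)+c(\cev a)=0$ and $\tau(a)+\tau(\cev a)=0$, the antiparallel-arc cancellations in this routing preserve cost and transit time, so $c(\tilde H)=c(H^*)$ (hence $\tilde H\neq\emptyset$) and $\tau(\tilde H)=\tau(H^*)+\tau(B^*)<\tau(H^*)$. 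Since $H^*$ and $B^*$ lie in $\dynN_{\phi(f')}(s)\cup\dynN_{\phi(f')}(t)=\dynN_{\phi(f_{j_2})}(s)\cup\dynN_{\phi(f_{j_2})}(t)$ (invariance of reachability under cycle augmentations, Lemma~\ref{lem:invariant_under_augmentations}), $\tilde H$ is again an $s$- or $t$-reachable Eulerian subgraph of $\dynN_{\phi(f')}^{u=1}$; it has the minimum cost $c(H^*)$ but strictly smaller transit time than $H^*$, contradicting the choice of $H^*$. Hence no such $B^*$ exists.

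\emph{Main obstacle.} The crux is the combination step in part~(b): turning ``$H^*$ together with the offending cycle $B^*$'' into a bona fide Eulerian subgraph of the residual network of $f'$ — not of $f_{j_2}$ — that still qualifies as $s$- or $t$-reachable and is therefore an admissible competitor to $H^*$. This forces one to route the circulation $x''-\phi(f')$ to deal with residual capacities, to note that the cancellations involved are cost- and transit-neutral, and to verify (via Observation~\ref{obs:disjoint} and the fact that $s$-reachable cycles live in the strong component of $s$, which is not enlarged by the augmentations used) that every cycle of $\tilde H$ inside $\dynN_{\phi(f')}(s)$ is itself $s$-reachable. The remaining ingredients — the static ``no residual cycle crosses a minimum cut'' fact underlying part~(a), and the reduction to the last Step~1.2 iteration together with the iterated application of Lemma~\ref{lem:augmentation_along_zero_transit_time} for the degenerate case — are routine.
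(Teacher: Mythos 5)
Your proof is correct and follows essentially the same route as the paper's. For part (a) you establish, as the paper does, that every augmenting cycle of Steps 1.1 and 1.2 projects onto cycles of $\dynN_{\phi(f_i)}$ and that no residual cycle of a maximum static flow can cross a minimum $s$-$t$ cut, so no lifting of an arc of $\cut(\dynN)$ (nor its reverse) is ever used; your packaging of this as ``$f_i(a^{\theta'})$ is independent of $i$'' is a slightly stronger but equivalent statement. For part (b) both arguments are the same exchange against the lexicographic (minimum cost, then minimum transit time) choice of the Eulerian subgraph in Step 1.2; the only substantive difference is that you superpose the offending cycle $B^*$ onto the Eulerian subgraph $H^*$ of the \emph{last} Step 1.2 iteration by routing the circulation $\phi(f_{j_2})+\chi^{B^*}-\phi(f')$ in $\dynN_{\phi(f')}$, whereas the paper merges $B$ with the subgraph $H_\ell$ of the iteration in which $B$ first becomes residual. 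Your variant buys a little robustness: it does not require arguing that the inducing iteration is a Step 1.2 rather than a Step 1.1 iteration, and you explicitly dispose of the degenerate case in which Step 1.2 is never entered. Both versions leave the same point at the same level of detail, namely why the combined Eulerian subgraph still qualifies as $s$- or $t$-reachable so that it is an admissible competitor to $H^*$; you at least flag this explicitly, which the paper does not.
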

\begin{proof}
    To prove part (a) of this lemma, we at first need to deduce that every cycle $C$ along which we augment in Step 1.1 or Step 1.2 of Construction~\ref{alg} fulfills $A(C) \cap \lift(\cut(\dynN))= \emptyset$.
    To see this, let $i+1$ be an iteration of our construction in which we visit Step 1.1.
    Every negative cost cycle in $\TEN{\theta}_{f_i}$ along which we augment in such an iteration is a lifting of a cycle $B$ in $\dynN_{\phi(f_i)}$ which does not intersect $\cut(\dynN)$.
    Thus, $A(C) \cap \lift(\cut(\dynN)) = \emptyset$.
    In an iteration $i+1$ in which we visit Step 1.2 an augmenting cycle $C$ projects down to a closed walk $W$ with two possible forms (1) or (2):
    (1) $W$ starts at $s$, then follows a path $P_1$ towards a cycle $B \subset \dynN_{\phi(f_i)}(s)$ , traverses $B$ multiple times until it follows a path $P_2$ back towards $s$, or (2) $W$ starts at $t$, then follows a path $P_1$ towards a cycle $B \subset \dynN_{\phi(f_i)}(t)$ , traverses $B$ multiple times until it follows a path $P_2$ back towards $t$.
    We only consider case (1).
    The proof for case (2) is analogue.
    We clearly have $A(B) \cap \cut(\dynN) = \emptyset$. 
    Both paths $P_1$ and $P_2$ start and end in $\dynN_{\phi(f_i)}(s)$ and since $\phi(f_i)$ is a maximum $s$-$t$ flow, the paths cannot cross a minimum $s$-$t$ cut in forward direction. 
    Thus, $P_1,P_2 \subseteq \dynN_{\phi(f_i)}(s)$ and hence $A(C) \cap \lift(\cut(\dynN)) = \emptyset$.

    To deduce part (a) of Lemma~\ref{lem:properties}, note that 
    for every $i \in \{0,\ldots,j_2\}$ the flow $\phi(f_i)$ is a maximum $s$-$t$ flow by Lemma~\ref{lem:maxflow_induces_max_flow} and the fact that for every $i \in \{1,\ldots,j_2\}$ is obtained from $\phi(f_0)$ by a sequence of augmentations along cycles (Lemma~\ref{lem:lifing_lemma}).
    In particular, $a^{\theta'}$ is fully congested in $f_i$ for all $\theta' \in [\theta_1^i,\theta_2^i)$.
    Thus, the fact that $a^{\theta'}$ is not fully congested implies that $\theta' \in [1,\theta^{i}_1)$ or $\theta' \in [\theta^{i}_2,\theta)$.
    For every $i \in \{0,\ldots,j_2-1\}$, the cycle $C$ along which we augment in iteration $i+1$ lies in $\TENS{\theta_1^{i}}{\theta_2^{i}}_{f_{i}}$. 
    Thus, an arc $a^{\theta'}$ with  $\theta' \in [1,\theta^{i}_1)$ or $\theta' \in [\theta^{i}_2,\theta)$ that is not fully congested in $f_{i}$ will stay non-congested in the following iterations until iteration $j_2$.
    Since the negative cost cycles along which we augment in iterations $\{1,\ldots,j_2\}$ are disjoint from $\lift(\cut(\dynN))$, an arc $a^{\theta'}$ with $a \in \cut(\dynN)$ and $\theta' \in [1,\theta^{i}_1)$ or $\theta' \in [\theta^{i}_2,\theta)$  that is not fully congested in $f_{i}$ with $i \in \{0,\ldots,j_2\}$ is also not fully congested in $f_\mu$ with $\mu \in \{0,\ldots,i\}$.

    To show part (b), suppose $B$ is an $s$-reachable cycle in $\dynN^{u=1}_{\phi(f_{j_2})}(s)$ with $\tau(B)<0$ and $c(B) = 0$ and that $B$ is induced by iteration $\ell$ of the construction, i.e., $B$ is not contained in $\dynN_{\phi(f_{i})}(s)$ for all $0 \leq i<\ell$.
    Note that $\ell\geq 1$, as the network $\dynN_{\phi(f_0)}$ does not contain any cycles with negative transit time by Lemma~\ref{lem:maxflow_induces_max_flow}.
    Let $H_\ell$ be the Eulerian subgraph of $\dynN_{\phi{f_{\ell-1}}}^{u=1}$ along which we augment in iteration $\ell$ of the construction.
    Consider the Eulerian graph  $H$ which is induced by $(V(H_\ell) \cup V(B), A(H_\ell) \dot\cup A(B))$ by removing opposite arcs.
    Arcs that occur in $A(H_\ell)$ and $A(B)$ are counted twice. 
    Note that every subgraph of $H$ is a subgraph of $\dynN^{u=1}_{\phi(f_{\ell-1})}$ with $c(H) = c(H_\ell) + c(B) = c(H_\ell)$ and $\tau(H) = \tau(H_\ell) + \tau(B) < \tau(H_\ell)$.
    This contradicts the fact that in iteration $\ell$ we have chosen an Eulerian subgraph $H_\ell$ of $\dynN^{u=1}_{\phi_{\ell-1}}$ with minimal cost and among all of them minimal transit time.
\end{proof}

\section{Analysis of Step 2}
Suppose that Step 1.2 finishes after iteration $j_2$.
The starting point of our analysis of Step 2 is the flow over time $f_{j_2}$ that is repeated during $[\theta_1^{j_2},\theta_2^{j_2}]$ by Lemma~\ref{lem1} part $(b)$.
We prove inductively that after every iteration of the while-loop in Step 2 the resulting flow over time is still repeated.

Assuming that $f_i$ is repeated during $[\theta_1^i,\theta_2^i]$with $\theta_2^i-\theta_1^i > 2J(h_i)$ with $i \geq j_2$ and $h_i = \max\{\theta_1^i,\theta-\theta_2^i\}$ define
\begin{align}\label{eq:def_h}
    h := 2\left(\sum_{a \in A}\tau(a) \right) \cdot \left(1 + \sum_{a \in A}|c(a)|u(a)\right),
\end{align}
 and  the function $J: \R \rightarrow \R$ by
\begin{align}\label{eq:def_bound}
    J(x) := \left(\sum_{a \in A}\tau(a)\right)\left( (|V|+1)\left(2\sum_{a \in A}\tau(a) +3 \right)+1+2h+2x  \right) \cdot \sum_{a \in A}|c(a)|u(a) + 3\sum_{a \in A}\tau(a) 
\end{align}
for all $x \in \R$.
\begin{lemma}\label{lem:Step2_repeated}
    Let $i+1$ be an iteration of our construction in which Step 2 is visited.
    Then the flow over time $f_{i+1}$ is repeated during $[\theta_1^{i+1},\theta_2^{i+1}]$ with $\theta_1^{i+1} \leq \theta_1^{j_2}+J(\theta_1^{j_2})$ and $\theta_2^{i+1} \geq \theta_2^{j_2} - J(\theta_1^{j_2})$  if $\theta >2((\theta_1^{j_2}+ J(\theta_1^{j_2}) + J(\theta_1^{j_2}+ J(\theta_1^{j_2}))) $.
\end{lemma}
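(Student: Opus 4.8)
The plan is to argue by induction on the iterations $i \geq j_2$ of Step 2, showing that each iteration increases $\theta_1^i$ and decreases $\theta_2^i$ by only a bounded amount, so that after all Step 2 iterations the repeated interval has endpoints within $J(\theta_1^{j_2})$ of those of $f_{j_2}$. Concretely, suppose $f_i$ is repeated during $[\theta_1^i,\theta_2^i]$ with $\theta_2^i-\theta_1^i > 2J(h_i)$ and $h_i=\max\{\theta_1^i,\theta-\theta_2^i\}$. In iteration $i+1$ we pick a compressed negative-cost cycle $C_i$ lying in $\TENS{1}{\theta_2^i}_{f_i}$ or $\TENS{\theta_1^i}{\theta}_{f_i}$, augment to get $f'_{i+1}=f_i+\chi^{C_i}$, then symmetrize against the accumulated Eulerian graph $H_i$, peel off and augment along the zero-cost cycles $K'_{i+1,j}$, and compress the rest to form $H_{i+1}$. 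I would first quote Lemma~\ref{lem:num_of_connected_components} to bound the number of repeated components of $C_i$ by $|V|+1$, and then invoke Lemma~\ref{lem:height_of_component_pos} and Lemma~\ref{lem:height_of_component_neg} (the height bounds referenced in the overview) to bound the height of each repeated component of a compressed cycle by roughly $(\sum_a\tau(a))(2\sum_a\tau(a)+3)$ plus a term linear in $h_i$; multiplying these and adding the cost-bounded number of accumulated cycles in $H_i$ (at most $\sum_a|c(a)|u(a)$, since each carries strictly negative cost by the compression step preserving cost $\leq 0$ and the augmented-away ones have cost $0$) yields exactly the shape of $J(x)$ in~\eqref{eq:def_bound}.

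The key steps, in order: (i) show the cycle $C_i$ (after compression) only reaches a bounded number of layers into $\TENS{\theta_1^i}{\theta_2^i}_{f_i}$ — this is where Lemma~\ref{lem:num_of_connected_components} plus the per-component height bounds and the structural properties of $\dynN_{\phi(f_i)}$ from Lemma~\ref{lem:properties} are used; (ii) show that augmenting $f'_{i+1}$ along the zero-cost cycles $K'_{i+1,j}$ restores repeatedness on a slightly smaller interval — here the argument mirrors Lemma~\ref{lem:augmentation_along_zero_transit_time}/part (1) of the Lifting Lemma, since a zero-cost, and one checks zero-transit-time, cycle lifted across the repeated band keeps the flow repeated; (iii) bound the total ``erosion'' contributed by the retained compressed cycles in $H_{i+1}$, using that there can be at most $\sum_a|c(a)|u(a)$ of them (each strictly reducing cost and cost being bounded), each of bounded height — this is the $2h$ and $(|V|+1)(2\sum\tau+3)$ contributions in $J$; (iv) assemble: the new repeated interval is $[\theta_1^{i+1},\theta_2^{i+1}]$ with $\theta_1^{i+1}\leq \theta_1^{j_2}+J(\theta_1^{j_2})$ and $\theta_2^{i+1}\geq \theta_2^{j_2}-J(\theta_1^{j_2})$, and the hypothesis $\theta>2(\theta_1^{j_2}+J(\theta_1^{j_2})+J(\theta_1^{j_2}+J(\theta_1^{j_2})))$ guarantees this interval is nonempty and satisfies the induction hypothesis $\theta_2^{i+1}-\theta_1^{i+1}>2J(h_{i+1})$ for the next round, since $h_{i+1}\leq\theta_1^{j_2}+J(\theta_1^{j_2})$ and $J$ is monotone.

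I expect the main obstacle to be step (i) combined with step (iii): controlling how far a compressed Step-2 cycle together with the accumulated Eulerian graph $H_i$ can penetrate the repeated band, because naively the cycles found in successive iterations could ``stack'' and push $\theta_1^i$ upward iteration after iteration. The resolution is that the extra augmentations in lines~\ref{alg:aug1}–\ref{alg:aug2} of Construction~\ref{alg} (augmenting away all zero-cost cycles and only keeping negative-cost pieces in $H_{i+1}$) cap the number of surviving cycles in $H_i$ by the total cost budget $\sum_a|c(a)|u(a)$, independent of the iteration count — this is precisely what prevents the stacking and what makes $J$ a \emph{fixed} function of $\theta_1^{j_2}$ rather than growing with $i$. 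I would also need to verify carefully that the compression procedure of Appendix~\ref{SecA:Compression} does not itself increase height beyond the stated per-component bound and preserves cost $\leq$ that of the original cycle; I would cite the appendix for this. The transit-time bookkeeping (ensuring the zero-cost cycles we augment along also have zero or controlled transit time, so that Lemma~\ref{lem:augmentation_along_zero_transit_time}-type reasoning applies) is a further routine but necessary check, handled using Lemma~\ref{Cor:no_neg_transit_time_cycle} and Lemma~\ref{lem:properties} part (b).
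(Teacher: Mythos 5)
Your skeleton (induction over the Step-2 iterations, the height bounds of Lemmas~\ref{lem:height_of_component_pos} and~\ref{lem:height_of_component_neg}, and the observation that the extra augmentations in lines~\ref{alg:aug1}--\ref{alg:aug2} are what prevent stacking) matches the paper, and you correctly identify the danger that successive cycles could push $\theta_1^i$ upward iteration after iteration. But the mechanism you propose to resolve this --- capping the \emph{number} of surviving cycles in $H_i$ by the cost budget $\sum_a|c(a)|u(a)$ and ``multiplying'' per-cycle heights by that count --- does not deliver the stated bound. Even with at most $\sum_a|c(a)|u(a)$ surviving cycles, each new cycle is found in the residual network of a \emph{different} flow $f_i$, so its height is measured from a shifting baseline; summing or composing these contributions gives an erosion on the order of $\sum_a|c(a)|u(a)\cdot J(\cdot)$ (or an iterated composition of $J$), not the claimed $\theta_1^{i+1}\le\theta_1^{j_2}+J(\theta_1^{j_2})$. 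Relatedly, the factor $\sum_a|c(a)|u(a)$ in the definition of $J$ in~\eqref{eq:def_bound} does not come from counting accumulated cycles: it arises inside the height bound for a \emph{single} repeated component, via the cost-versus-height tradeoff in Lemmas~\ref{lem:cost_of_component_0} and~\ref{lem:height_of_component_neg}.

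The missing idea is the invariant the paper carries through the induction: after recombining $C_{i+1}$ with $H_i$ and re-decomposing the Eulerian graph $H'_{i+1}$, every surviving cycle $K_{i+1,\mu}$ is a \emph{negative-cost cycle in the residual network of the fixed flow $f_{j_2}$} (this is Lemma~\ref{lem:cost_of_Eulerian_Graph}, which your proposal never invokes). Because all these cycles live in $\TEN{\theta}_{f_{j_2}}$, the height bounds apply to each of them with the \emph{same} base $f_{j_2}$, so each is confined to $\TENS{1}{\theta_1^{j_2}+J(\theta_1^{j_2})}_{f_{j_2}}$ or to $\TENS{\theta_2^{j_2}-J(\theta_1^{j_2})}{\theta}_{f_{j_2}}$; no multiplication by the number of cycles occurs, and $f_{i+1}$, which differs from $f_{j_2}$ only by these confined cycles, is untouched on the middle band. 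The quantity $J(\theta_1^{j_2}+J(\theta_1^{j_2}))$ in the hypothesis on $\theta$ serves only to control how far the \emph{single new} cycle $C_{i+1}$ (found in $\TEN{\theta}_{f_i}$, where $h_i=\theta_1^{j_2}+J(\theta_1^{j_2})$ is fixed by induction) can reach before re-decomposition, ensuring it interacts with cycles of $H_i$ on one side only. Your step (ii) is also off target: the zero-cost cycles $K'_{i+1,j}$ are not lifted across the repeated band in the manner of the Lifting Lemma; they are already confined to the boundary bands by the same invariant, so augmenting along them cannot destroy repeatedness on the middle band.
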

%
%
%
%
Let $i+1$ be an iteration of our construction in which Step 2 is applied, suppose that $f_i$ is repeated during $[\theta_1^i,\theta_2^i]$with $\theta_2^i-\theta_1^i > 2J(h_i)$ with $h_i = \max\{\theta_1^i,\theta-\theta_2^i\}$, and let $C$ be a negative cost cycle in $\TENS{1}{\theta}_{f_{i}}$ or $\TENS{\theta_1^i}{\theta}_{f_{i}}$.
%
%
%
%
Recall the definition of a \emph{repeated component} of $C$ in~\eqref{eq:components}.
In order to prove that after augmenting along a negative cost cycle $C$ in $\TENS{1}{\theta_2^i}_{f_i}$ or in $\TENS{\theta_1^i}{\theta}_{f_i}$  the resulting flow over time is still repeated, we show that the height of each repeated component is bounded after potentially applying the compression procedure based on Appendix~\ref{SecA:Compression}.
A priori it is unclear how to bound the height of a repeated component of $C$.\
In fact our analysis is based on a connection between cost and height of these repeated components.
Deriving this connection requires us to analyze structural properties of the repeated components of $C$.
For this we need to consider $W_j := \pi(C_j')$ for all $j \in \{1,\ldots,k\}$, i.e., the projection of each $C_j'$ to $\dynN_{\phi(f_i)}$, and make a case distinction on the transit time of $W_j$.
If $\tau(W_j) > 0$, we deduce that $W_j$ is contained in either $\dynN_{\phi(f_i)}(s)$ or $\dynN_{\phi(f_i)}(t)$ which directly implies a bound on the cost of $W_j$ (Lemma~\ref{lem:no_cycle_crossing_cuts_pos}) by the structural properties of $\dynN_{\phi(f_i)}$ we have deduced before.
If $\tau(W_j) = 0$ the same statement holds under the additional assumption that the height of $W$ is larger than a lower bound (Lemma~\ref{lem:no_cycle_crossing_cuts_0}).
This allows us derive that the cost of $W_j$ is strictly positive if the height of $W_j$ exceeds a certain bound (Lemma~\ref{lem:cost_of_component_0}).
If $\tau(W_j) < 0$, a more precise analysis of the structure of $W_j$ (Lemma~\ref{lem:structure_of_connected_component_neg}) allows us to derive a lower bound on the cost of $W_j$ (Lemma~\ref{lem:cost_of_component_neg}).
Overall, this yields a lower bound on the cost of $C$ (Lemma~\ref{eq:bound_on_cost_overall}).
Putting together this lower bound and the structural properties derived for the repeated components of $C$ enables us to bound the height for each repeated component of $C$ (Lemma~\ref{lem:height_of_component_neg} and Lemma~\ref{lem:height_of_component_pos}).

To show that $W_j$ is contained in either $\dynN_{\phi(f_i)}(s)$ or $\dynN_{\phi(f_i)}(t)$ if $\tau(W_j) \geq 0$ (and the height of $W_j$ exceeds a lower bound), the following three lemmas are central.
\begin{lemma}\label{lem:path_towards_cut}
    Let $i\geq j_2$ be an iteration of our construction and suppose that $f_i$ is repeated during $[\theta_1^i,\theta_2^i]$.
    Let $a \in \cut(\dynN)$  with $\theta' \in [1,\theta)$ such that $a^{\theta'}$  is not fully congested in the flow over time $f_{i}$.
     If $\theta' \in [\theta^i_2,\theta)$, then there is an $\hat{s}$--$\tail(a^{\theta'})$ path in $\TEN{\theta}_{f_i}$.
    If $\theta' \in [1,\theta^i_1)$, then there is a $\head(a^{\theta'})$--$\hat{t}$ path in $\TEN{\theta}_{f_i}$.
\end{lemma}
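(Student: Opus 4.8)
The plan is to exploit two ingredients already established: the repeated structure of $\TEN{\theta}_{f_i}$ on $[\theta_1^i,\theta_2^i]$ (Observation~\ref{obs:layers_are_the_same}), and the fact that $\phi(f_i)$ is a \emph{maximum} static $s$-$t$ flow in $\dynN$ (Lemma~\ref{lem:maxflow_induces_max_flow} together with the invariance of the value under cycle augmentations, Lemma~\ref{lem:lifing_lemma}). The key observation is that since $a\in\cut(\dynN)$ and $a^{\theta'}$ is \emph{not} fully congested, the arc copy $a^{\theta'}$ cannot lie in the repeated interval: for every $\theta''\in[\theta_1^i,\theta_2^i)$ the arc copy $a^{\theta''}$ carries $\phi(f_i)(a)$ units of flow, and since $a$ belongs to some minimum $s$-$t$ cut of $\dynN$ and $\phi(f_i)$ is a maximum flow, $a$ is saturated, so $a^{\theta''}$ is fully congested. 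Hence $\theta'\in[1,\theta_1^i)$ or $\theta'\in[\theta_2^i,\theta)$, which is exactly the dichotomy in the statement. (This is essentially Lemma~\ref{lem:properties}(a) and its proof.)

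For the case $\theta'\in[\theta_2^i,\theta)$, the goal is an $\hat{s}$--$\tail(a^{\theta'})$ path in $\TEN{\theta}_{f_i}$. First I would take a minimum $s$-$t$ cut $(S,V\setminus S)$ of $\dynN$ with $a=(p,q)$, $p\in S$, $q\in V\setminus S$. Since $\phi(f_i)$ is a maximum flow, there is an augmenting-type path structure: every node in $S$ is reachable from $s$ in the residual network $\dynN_{\phi(f_i)}$ (more precisely, $S$ can be taken to be the set of nodes reachable from $s$). In particular $p=\pi(\tail(a^{\theta'}))$ is reachable from $s$ in $\dynN_{\phi(f_i)}$. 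Now lift this residual $s$-$p$ path to the time-expanded network using the lifting construction of Definition~\ref{def:lifting}, choosing a starting layer so that the lift lands entirely inside the repeated interval $[\theta_1^i,\theta_2^i]$ — this is possible because the repeated interval is long by hypothesis ($\theta_2^i-\theta_1^i>2J(h_i)$, far larger than $\sum_{a\in A}\tau(a)$, which bounds the height of any simple residual path). The lift gives a path from some $s^{t_1}$ to $p^{t_\ell}$ inside $\TENS{\theta_1^i}{\theta_2^i}_{f_i}$. Then I prepend the arc $(\hat{s},s^{t_1})$ (which always exists in $\TEN{\theta}$ with infinite capacity, hence is present in the residual network) to reach $\hat{s}$, and I need to connect $p^{t_\ell}$ forward to $\tail(a^{\theta'})=p^{\theta'}$. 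Since $p^{t_\ell}$ and $p^{\theta'}$ are copies of the same node $p$ with $t_\ell\le\theta_2^i\le\theta'$, and there is no holdover arc, I instead argue: the arc copies $a^{\theta_1^i},\dots,a^{\theta_2^i-1}$ are all saturated, so their residual \emph{backward} arcs $\cev{a^{\theta''}}$ exist; but that moves in the wrong direction. The cleaner route is to directly lift a residual $s$-$p$ walk to a target layer equal to $\theta'$ minus the appropriate offset so that the head of the lifted path is exactly $p^{\theta'}$; since any node copy $p^{\theta''}$ with $\theta''\in[\theta_1^i,\theta_2^i]$ has isomorphic in-neighbourhoods (Observation~\ref{obs:layers_are_the_same}) and $\theta'$ is close to $\theta_2^i$ (within $\sum_{a\in A}\tau(a)$ of the boundary is not guaranteed, so one uses that the lifted path has height at most $\sum_{a\in A}\tau(a)$ and $\theta_2^i-\theta_1^i$ is large, so a lift ending exactly at layer $\theta'$ fits in $[\theta_1^i,\theta]$ as long as $\theta'\ge\theta_1^i+\sum_{a\in A}\tau(a)$, which holds since $\theta'\ge\theta_2^i$ and the interval is long). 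Concatenating $(\hat{s},s^{t_1})$ with this lift yields the desired $\hat{s}$--$\tail(a^{\theta'})$ path.

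For the symmetric case $\theta'\in[1,\theta_1^i)$, I would run the mirror argument: $q=\pi(\head(a^{\theta'}))$ lies on the sink side of a minimum cut, so there is a residual $q$-$t$ path in $\dynN_{\phi(f_i)}$; lift it so that its tail is exactly $\head(a^{\theta'})=q^{\theta'+\tau(a)}$ (using that $\theta'+\tau(a)\le\theta_1^i+1\le\theta_2^i-\sum_{a\in A}\tau(a)$, so the lift fits inside the repeated interval extended to layer $\theta$), then append the arc $(t^{\theta''},\hat{t})$ at the end to reach $\hat{t}$.

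The main obstacle I expect is the bookkeeping around \emph{layer alignment}: making sure the residual path lifted from $\dynN_{\phi(f_i)}$ can be placed so that its endpoint coincides exactly with $\tail(a^{\theta'})$ (resp.\ its startpoint with $\head(a^{\theta'})$) while the whole lift stays inside a region of $\TEN{\theta}_{f_i}$ that is repeated — this needs the quantitative hypothesis $\theta>2((\theta_1^{j_2}+J(\theta_1^{j_2})+J(\theta_1^{j_2}+J(\theta_1^{j_2}))))$, equivalently $\theta_2^i-\theta_1^i$ large, to guarantee enough room. A secondary subtlety is justifying that $\tail(a^{\theta'})$ (resp.\ $\head(a^{\theta'})$) really is a copy of a node on the source side (resp.\ sink side) of a common minimum cut: this is exactly where $a\in\cut(\dynN)$ is used, and one must pick the cut consistently with the reachability sets $V_{\phi(f_i)}(s)$, $V_{\phi(f_i)}(t)$ so that $\pi(\tail(a^{\theta'}))\in V_{\phi(f_i)}(s)$ in the first case and $\pi(\head(a^{\theta'}))\in V_{\phi(f_i)}(t)$ in the second; since $a$ crosses some minimum cut and $\phi(f_i)$ saturates all minimum cuts, $\tail(a)$ is on the $s$-reachable side of \emph{that} cut, giving the claim.
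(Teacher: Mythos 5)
There are two genuine gaps in your argument, and together they explain why the paper takes a completely different route.

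First, the claim that $p=\pi(\tail(a^{\theta'}))$ is reachable from $s$ in $\dynN_{\phi(f_i)}$ is false in general. Membership of $a$ in \emph{some} minimum cut does not place $\tail(a)$ in $V_{\phi(f_i)}(s)$: residual reachability from $s$ yields only the \emph{minimal} minimum cut, while $a$ may cross a different one. Concretely, for the path network $s\to u\to v\to t$ with unit capacities, $(u,v)\in\cut(\dynN)$ but $u\in\overline{V}_{\phi(f_i)}$, so there is no residual $s$-$u$ path. Your closing remark acknowledges the issue but the proposed resolution (``$\tail(a)$ is on the $s$-reachable side of \emph{that} cut'') conflates the source side of an arbitrary minimum cut with the residual reachability set. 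Second, even where such a residual path exists, you cannot lift it so that it ends at $p^{\theta'}$ with $\theta'\in[\theta_2^i,\theta)$: the lift would have to occupy layers in the \emph{non-repeated} region $[\theta_2^i,\theta)$, where Observation~\ref{obs:layers_are_the_same} gives no control and the residual arcs need not mirror those of $\dynN_{\phi(f_i)}$ (in particular, a backward residual arc $\cev{b}$ of $\dynN_{\phi(f_i)}$ has no counterpart at a layer where $f_i(b^{\theta''})=0$, which is typical near the end of the horizon). The Lifting Lemma only places lifts inside $[\theta_1^i,\theta_2^i]$.

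The paper's proof avoids both problems by working with \emph{forward} arcs of the original network rather than residual arcs: it uses the temporal structure of the Ford--Fulkerson flow $f_0$, namely that flow is pushed into each decomposition path $P$ only during $\{1,\ldots,\theta-\tau(P)\}$, so at late times $\theta'\ge\theta_2^i$ the lifted prefix $P_1^{\mu}[s,\head(a)]$ of a flow-carrying path through $a$ consists of unsaturated forward copies and is therefore itself a residual $\hat{s}$--$\tail(a^{\theta'})$ path. It then transfers this to $f_i$ via invariance of reachability under cycle augmentations, and handles by a separate induction the case where $a^{\theta'}$ only became non-congested through Step-2 augmentations after iteration $j_2$ (a case your proposal does not treat at all, since the non-congestion hypothesis is stated for $f_i$, not $f_0$). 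You would need to rebuild your argument around the flow decomposition of $f_0$ and Lemma~\ref{lem:properties}(a) rather than around residual reachability and lifting.
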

\begin{proof}
    We only give a proof for the case $\theta' \in [\theta_2^i,\theta)$.
    The proof for the other case is symmetric.
    We distinguish two subcases. 
    First suppose that the arc $a^{\theta'}$ is not fully congested in the flow over time $f_{j_2}$.
    In this case, Lemma~\ref{lem:properties} part (a) yields that the arc $a^{\theta'}$ is not fully congested in $f_0$.
    By the algorithm of Ford and Fulkerson, the flow that is sent through liftings of $a$ via $f_0$ is induced by a set of $s$-$t$ paths $P_1,\ldots,P_k$ in $\dynN$.
    More precisely, flow is sent into the path $P_i$ during $\{1,\ldots,\theta-\tau(P_i)\}$ for all $i \in \{1,\ldots,k\}$, i.e., flow covers the arc $a$ via $P_i$ during $\{\tau(P_i[s,\head(a)],\ldots, \theta-\tau(P_i)+\tau(P_i[s,\head(a)])-1\}$.
    Without loss of generality suppose that $P_1 = \argmin\{\theta-\tau(P)+\tau(P[s,\head(a)]-1): P \in \{P_1,\ldots,P_k\}\}$.
    Thus, for all $\theta' \in \{\theta-\tau(P_1)+\tau(P_1[s,\head(a)]),\ldots,\theta\}$, the arc $a^{\theta'}$ is not fully congested in $f_0$.
    For $\mu = \theta'-\tau(P_1[s,\head(a)])$, denote by $P^\mu_1[s,\head(a)]$ the lifting of $P_1[s,\head(a)]$ that starts at $s^{\mu}$.
    Recall that $\theta-\theta_2^0,\theta_1^0 < \sum_{a \in A}\tau(a)$ by Lemma~\ref{lem:maxflow_induces_max_flow}.
    Together with our assumption on $\theta$ in~\eqref{eq:bound_on_TH} this yields $\mu \geq \theta_2^0 - \sum_{a \in A}\tau(a) \geq \theta-2\sum_{a \in A}\tau(A) > \theta_1^0$.
    Clearly, $P^\mu_1[s,\head(a)]$ ends with the arc $a^{\theta'}$.
    It remains to prove that $P^\mu_1[s,\head(a)]$ is a path in $\TEN{\theta}_{f_0}$.
    Recall that in $f_0$ we stop sending flow into $P_1$ at time $\theta-\tau(P)+1$.
    Since $\mu \geq \theta-\tau(P)+1$ this implies that $P^\mu_1[s,\head(a)]$ is a path in $\TEN{\theta}_{f_0}$.
    In particular, there is an $\hat{s}$--$\tail(a^{\theta'})$ path in $\TEN{\theta}_{f_0}$.
    Augmenting along cycles in the following iterations does not change reachability relations.
    Thus, there is an $\hat{s}$--$\tail(a^{\theta'})$ path in $\TEN{\theta}_{f_i}$.

    As a second case suppose that $a^{\theta'}$ is fully congested in $f_{j_2}$ but not fully congested in $f_i$.
    We prove this case by induction. 
    First let $i = j_2+1$. 
    Thus, there is a cycle  $C$ along which we augment in iteration $j_2+1$ with the property that it contains the backward arc of $a^{\theta'}$ and another arc $b^{\theta''}$ such that $b \in \cut(\dynN)$ and $\theta'' \in [\theta_2^i,\theta)$.
    In particular $b^{\theta''}$ is not fully congested in $f_{j_2}$.
    The arguments presented in the the first subcase show that there exists an $\hat{s}$-$\head(b^{\theta''})$ path $P$ in $\TEN{\theta}_{f_{j_2}}$.
    In  $\TEN{\theta}_{f_{j_2+1}}$ we can now follow along the path $P$ until we reach a blocked arc, and then follow along the cycle $C$ in backward direction until we reach $\head(a^{\theta''})$.
    This results in a $\hat{s}$-$\head(a^{\theta''})$ path $P'$ in $\TEN{\theta}_{f_{j_2+1}}$.
    %
    Next, let $i>j_2+1$.
    If $a^{\theta''}$ is not fully congested in $f_{i-1}$, then from the hypothesis of induction there exists an $\hat{s}$-$\head(a^{\theta'})$ path $P$ in $\TEN{\theta}_{f_{i-1}}$.
    Again, the path $P$ can be combined with the augmenting cycle $C$ in iteration $i$ to an $\hat{s}$-$\head(a^{\theta'})$ path $P'$ in $\TEN{\theta}_{f_{i}}$.

    If $a^{\theta''}$ is fully congested in $f_{i-1}$, then the same arguments as for $i=j_2+1$ yield that there exists an $\hat{s}$-$\head(a^{\theta'})$ path $P'$ in $\TEN{\theta}_{f_{i}}$.
\end{proof}


%
We proceed by proving two additional structural lemmas that are useful during the analysis of Step 2.
\begin{lemma}\label{lem:cycles_are_reachable}
    Let $i \geq j_2$ be an iteration of our construction, suppose that $f_i$ is repeated during $[\theta_1^i,\theta_2^i]$ and let $C$ be a negative cost cycle in  $(\TENS{1}{\theta})_{f_i}^{u=1}$.
    Let $C'$ be a repeated component of $C$ and $W := \pi(C')$.
    Then every cycle $B$ in the decomposition of $W$ according to Lemma~\ref{lem:decomposition_of_walk} with $B \subseteq \dynN_{\phi(f_i)}(s)$ or $B \subseteq \dynN_{\phi(f_i)}(t)$ is $s$- or $t$-reachable, respectively.
\end{lemma}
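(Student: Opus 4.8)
The plan is to reduce to the already-established structural facts by a case distinction on the transit time $\tau(W)$ of the projected walk $W = \pi(C')$, mirroring the case distinctions used in Lemma~\ref{lem:Step12} and in the Projection Lemma. First I would observe that a repeated component $C'$ of $C$ is by definition a path lying entirely in $\TENS{\theta_1^i}{\theta_2^i}_{f_i}$, so $W$ is a walk in $\dynN_{\phi(f_i)}$ and its decomposition into a path and cycles $B_1,\ldots,B_\ell$ (Lemma~\ref{lem:decomposition_of_walk}) makes sense; fix one such $B = B_m$ with $B \subseteq \dynN_{\phi(f_i)}(s)$ (the $t$-case being symmetric by reversing all arcs). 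Since $B$ lies in $\dynN_{\phi(f_i)}(s)$, there is automatically an $s$-$u$ path in $\dynN_{\phi(f_i)}$ for every $u \in V(B)$, so the only thing to prove is the existence of a $u$-$s$ path, i.e. that $s$ is reachable \emph{from} $B$.

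The key observation is that, because $C'$ is a \emph{component} of $C$, its endpoints lie on layer $\theta_1^i$ or $\theta_2^i$ (or are endpoints of a path-$C$), so $C'$ cannot be a cycle by itself; it is a genuine sub-path of the negative-cost cycle $C \subseteq (\TENS{1}{\theta})^{u=1}_{f_i}$, and $C$ does close up, either through $\hat s$ / $\hat t$ or through the non-repeated parts $\TENS{1}{\theta_1^i}_{f_i}$, $\TENS{\theta_2^i}{\theta}_{f_i}$. The plan is: if $C$ passes through $\hat s$, then $s \in V(\pi(C))$ and since $W$ is a sub-walk whose image lies in $\dynN_{\phi(f_i)}(s)$, one traces from any vertex of $B$ forward along $W$ to its end on layer $\theta_1^i$ or $\theta_2^i$ and then continues along the rest of $\pi(C)$ — which stays in $\dynN_{\phi(f_i)}(s)$ by Observation~\ref{obs:disjoint} since $C$ already visits $\hat s$ — until reaching $s$; symmetrically if $C$ passes through $\hat t$ and $B \subseteq \dynN_{\phi(f_i)}(t)$. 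If instead $C$ reaches into a non-repeated part $\TENS{\theta_2^i}{\theta}_{f_i}$, then $C$ must traverse an arc $a^{\theta'}$ with $\theta' \in [\theta_2^i,\theta)$ that is either a forward or a backward arc of a cut arc; applying Lemma~\ref{lem:path_towards_cut} we obtain an $\hat s$-path reaching that arc in $\TEN{\theta}_{f_i}$, which together with the portion of $C$ between $B$ and $a^{\theta'}$ closes up a walk realizing $s$-reachability of $B$ after projection; the case of the lower non-repeated part $\TENS{1}{\theta_1^i}_{f_i}$ gives $t$-reachability symmetrically via the $\head(a^{\theta'})$-$\hat t$ path of Lemma~\ref{lem:path_towards_cut}. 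In every case one converts a walk in $\TEN{\theta}_{f_i}$ to the required path in $\dynN_{\phi(f_i)}$ by applying $\pi$ and using Observation~\ref{obs:cycles}.

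The main obstacle I expect is the bookkeeping needed to guarantee that the portion of $\pi(C)$ one traces \emph{outside} the component $C'$ actually stays inside $\dynN_{\phi(f_i)}(s)$ (resp. $(t)$): a priori $C$ could weave between the three regions $\dynN_{\phi(f_i)}(s)$, $\overline{\dynN}_{\phi(f_i)}$, $\dynN_{\phi(f_i)}(t)$, and Observation~\ref{obs:disjoint} only controls where \emph{cycles} lie, not arbitrary sub-walks. The way around this is to not trace along $C$ itself but to use Lemma~\ref{lem:path_towards_cut}: that lemma supplies a clean $\hat s$-to-cut-arc path that lives in $\TEN{\theta}_{f_i}$ and whose projection therefore stays on the $s$-side by maximality of $\phi(f_i)$ (a maximum flow cannot push residual capacity forward across a minimum cut), so the reachability certificate is obtained without ever needing to understand the global crossing pattern of $C$. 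One should also double-check the degenerate situation where $C$ is a cycle in $(\TENS{1}{\theta_2^i})^{u=1}_{f_i}$ or $(\TENS{\theta_1^i}{\theta})^{u=1}_{f_i}$ — i.e. $C$ touches only one of the two non-repeated parts — which is exactly the setting of Step 2, and here the same argument applies with only one of $\hat s$, $\hat t$ or one non-repeated region available, which is all that is needed.
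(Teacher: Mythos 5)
Your proposal correctly isolates the nontrivial direction (the $u$\mbox{-}$s$ path back into the terminal) and correctly case-splits on how $C$ closes up, but the mechanisms you use to produce that path do not work, and the paper's actual mechanism is missing. The central gap is that you treat $\pi(C)$ as a walk in $\dynN_{\phi(f_i)}$ beyond the repeated component: the correspondence between residual arcs of $\TEN{\theta}_{f_i}$ and residual arcs of $\dynN_{\phi(f_i)}$ holds only inside $\TENS{\theta_1^i}{\theta_2^i}_{f_i}$, since $\phi(f_i)$ is defined from the flow values in the repeated layers; an arc of $C$ living in $\TENS{1}{\theta_1^i}_{f_i}$ or $\TENS{\theta_2^i}{\theta}_{f_i}$ need not project to a residual arc of $\dynN_{\phi(f_i)}$ at all. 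So ``continue along the rest of $\pi(C)$ until reaching $s$'' does not yield a $u$\mbox{-}$s$ path in $\dynN_{\phi(f_i)}$ (and Observation~\ref{obs:disjoint} only locates \emph{cycles}, not arbitrary subwalks, so it cannot keep that continuation inside $\dynN_{\phi(f_i)}(s)$ either). Your fallback via Lemma~\ref{lem:path_towards_cut} is also directionally wrong: that lemma produces a path \emph{from} $\hat{s}$ \emph{to} the tail of a non-congested cut arc (resp.\ from its head to $\hat{t}$); concatenating it with the portion of $C$ from $B$ to that arc gives two walks that both \emph{end} at the arc, not a directed $B$\mbox{-}to\mbox{-}$s$ walk. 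In the paper that lemma is used only to build $\hat{s}$\mbox{-}$\hat{t}$ augmenting paths for contradiction arguments, not as a reachability certificate here.

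What the paper does instead, and what your proof needs, is a two-stage argument. First, it locates (or assumes, via Projection Lemma part (c)) a backward arc of $C$ occurring \emph{after} a lifting of $B$ that is induced by an $\hat{s}$\mbox{-}$\hat{t}$ path carrying flow of $f_i$; when $\tau(W)=0$ and $h^\theta_{f_i}(C')>0$ such an arc must exist inside $C'$ itself (a zero-transit-time component of positive height must use a negative-transit-time backward arc, and only $\hat{s}$\mbox{-}$\hat{t}$ flow paths can induce one), and when $\tau(W)\neq 0$ the cases $\hat{s}\in V(C)$, $\hat{t}\in V(C)$, or $C$ crossing all three regions are handled by walking along $C$ in the \emph{underlying} network $\TEN{\theta}$, diverting around backward arcs induced by single-layer flow cycles. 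This establishes that $t$ is reachable from $V(B)$ in the static network $\dynN$. Second --- and this is the step entirely absent from your proposal --- that static reachability is converted into residual reachability of $s$: starting in $V(B)$ one follows the $\dynN$-path towards $t$, detours around saturated arcs using residuals of flow-carrying cycles, until one meets an $s$\mbox{-}$t$ path of the flow decomposition of $\phi(f_i)$, and then traverses that path backwards to $s$. Without this conversion you never obtain the required $u$\mbox{-}$s$ path in $\dynN_{\phi(f_i)}$, so the proposal as written does not prove the lemma.
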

\begin{proof}
    If there is an arc $a \in A(B)$ with $a \in \cev{A}$ such that a lifting of this arc in $C$ is induced by a $\hat{s}$-$\hat{t}$ path along which $f$ sends flow, then the statement of the lemma follows directly from part $(c)$ of the Projection Lemma (Lemma~\ref{lem:projection_lemma}).

    To prove the lemma when $A(B)$ does not contain such an arc, we distinguish whether $B$ lies in a repeated component with transit time zero or not.
    Suppose that $\tau(W) = 0$ and $B \subseteq \dynN_{\phi(f_i)}(s)$.
    The case  $B \subseteq \dynN_{\phi(f_i)}(t)$ can be shown by a symmetric argument.
    By definition of $\dynN_{\phi(f_i)}(s)$ there is an $s$-$v$ path in $\dynN_{\phi(f_i)}$ for all $v \in V(B)$.
    As $h^\theta_{f_i}(C') > 0$ and $\tau(W) = 0$, there is an arc $a \in A(C'_1)$ with $\tau(\pi(a)) < 0$ that occurs in $W$ after $B$.
    Thus, $a$ is induced by an $\hat{s}$-$\hat{t}$ path along which $f_i$ sends flow.
    Suppose that $a$ is the first such arc that occurs in $C'$ after a lifting of $B$.
    Let $P$ be an $\hat{s}$-$\hat{t}$ path along which $f_i$ sends flow that passes through $\cev{a}$.
    Starting from a vertex in a lifting of $B$ we can now follow $C'$ in $\TEN{\theta}$ towards $\tail(a)$.
    All the arcs on this path $P'$ are either induced by a cycle $C_0$ that lies completely in one layer of $\TEN{\theta}$ or are arcs in $\TEN{\theta}$.
    If we reach an arc of the first kind, we can thus follow along the inducing cycle $C_0$ and then proceed along $C'$.
    Overall, it is thus possible to reach $\tail(a)$ in $\TEN{\theta}$ from a node in a lifting of $B$.
    Once we have reached $\tail(a)$, we follow $P$ towards $\hat{t}$.
    This construction implies that we can reach $\hat{t}$ from a node in a lifting of $B$ in $\TEN{\theta}$.
    In particular, $t$ can be reached from $V(B)$ in $\dynN$.
    Thus, to reach $s$ from $V(B)$ in $\dynN_{\phi(f_i)}$ we can follow a path from $V(B)$ towards $t$, potentially walking backward along cycles from the decomposition of $\phi(f_i)$ until we reach an $s$-$t$ path from the decomposition of $\phi(f_i)$.
    Then we can follow this path in backward direction to reach $s$.

    Next suppose that $\tau(W) \neq 0$.
    Note that in this case $C$ either contains $\hat{s}$ or $\hat{t}$ or $C$ intersects $\TENS{1}{\theta_2^i}$ \emph{and} $\TENS{\theta_1^i}{\theta_2^i}$ \emph{and} $\TENS{\theta_2^i}{\theta}$.
    If $C$ intersects $\TENS{1}{\theta_2^i}$ \emph{and} $\TENS{\theta_1^i}{\theta_2^i}$ \emph{and} $\TENS{\theta_2^i}{\theta}$ and does not contain $\hat{s}$ or $\hat{t}$, then there is some arc $a \in C$ with $\tau(\pi(a))<0$ which is induced by a path along which $f_i$ sends flow.
    Thus, the statement of the lemma follows from the same arguments as presented before.
    Now suppose that $C$ contains $\hat{s}$ or $\hat{t}$.
    If there is an arc $a \in A(C)$ that occurs after a lifting of $B$ in $C$ that is induced by a path along which $f_i$ sends flow, then the statement of the lemma follows by the same arguments as presented before.
    Suppose such an arc does not exist.
    If $C$ contains $\hat{s}$, then, we can walk towards $\hat{s}$ in $\TEN{\theta}$ starting from a node in a lifting of $B$ and then following $C$ while potentially diverting around cycles $C_0$ that lie completely in one layer of $\TEN{\theta}$.
    Once we have reached $\hat{s}$, we can follow any path towards $\hat{t}$ in $\TEN{\theta}$.
    If $C$ contains $\hat{t}$, then we can directly reach $\hat{t}$ from a node in a lifting of $B$ by potentially diverting around cycles $C_0$ that lie completely in one layer of $\TEN{\theta}$.
    Overall, this implies that it is possible to reach $\hat{t}$ from a node in a lifting of $B$ in $\TEN{\theta}$.
    Similar arguments show that $s$ can also be reached from $B$ in $\dynN_{\phi(f_i)}$.

\end{proof}
\begin{lemma}\label{lem:augmenting_path}
    Let $C'$ be a repeated component of $C$ and $W = \pi(C')$.
    The following statements hold.
    \begin{itemize}
        \item  If $C$ contains $\hat{s}$, then $C'' = C \cap \TENS{1}{\theta_2^i}_{f_i} = \emptyset$.
        Similarly, if $C$ contains $\hat{t}$, then $C'' = C \cap \TENS{\theta_2^i}{\theta}_{f_i} = \emptyset$.
        \item  If there exists a node $v \in W$ with $v \in \dynN_{\phi(f_i)}(s)$ such that $h_{\phi(f_i)}(v,W)\geq \sum_{a \in A} \tau(a)$, then $\hat{t} \not \in C$ , $C'' = C \cap \TENS{\theta_2^i}{\theta}_{f_i} = \emptyset$ and there is no node $v' \in V$  with $v' \in \dynN_{\phi(f_i)}(t)$ such that $h_{\phi(f_i)}(v',W)\geq \sum_{a \in A} \tau(a)+h_{\phi(f_i)}(v,W)$

    Similarly, if there exists a node $v \in W$ with $v \in \dynN_{\phi(f_i)}(t)$ such that $h_{\phi(f_i)}(v,W) \sum_{a \in A} \tau(a)$, then $\hat{s} \not \in C$, $C'' = C \cap \TENS{1}{\theta_1^i}_{f_i} = \emptyset$  and there is no $v' \in W$  with $v' \in \dynN_{\phi(f_i)}(s)$ such that $h_{\phi(f_i)}(v',W) \sum_{a \in A} \tau(a)+h_{\phi(f_i)}(v,W)$. 
    \end{itemize}

\end{lemma}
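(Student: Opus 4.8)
The plan is to prove each of the four clauses by contradiction, in every case producing an $\hat s$--$\hat t$ path in the residual network $\TEN{\theta}_{f_i}$; this contradiction works because $f_i$ is a maximum flow over time — Steps~1 and~2 augment only along cycles, so $f_i$ has the same value as $f_0$ — and hence $\TEN{\theta}_{f_i}$ admits no augmenting $\hat s$--$\hat t$ path. Three ingredients are used throughout. (I)~Observation~\ref{obs:disjoint}: the sets $V_{\phi(f_i)}(s)$ and $V_{\phi(f_i)}(t)$ are disjoint, and since $\phi(f_i)$ is a maximum $s$--$t$ flow we have $t\notin V_{\phi(f_i)}(s)$ and $s\notin V_{\phi(f_i)}(t)$. (II)~The regular structure of the repeated part (Observation~\ref{obs:layers_are_the_same}): any residual $s$--$v$ path of $\dynN_{\phi(f_i)}$ lifts to a residual path in $\TENS{\theta_1^i}{\theta_2^i}_{f_i}$ ending at a prescribed copy $v^{\theta_v}$ of $v$ once $\theta_v\ge\theta_1^i+\sum_{a\in A}\tau(a)$, and, symmetrically, a residual $v$--$t$ path lifts to one starting at $v^{\theta_v}$ once $\theta_v\le\theta_2^i-\sum_{a\in A}\tau(a)$. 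Since $C'$ is a repeated component of $C$, a node $v$ occurring at height $h_{\phi(f_i)}(v,W)$ in $W=\pi(C')$ sits in $C'$ at a layer $\theta_v\ge\theta_1^i+h_{\phi(f_i)}(v,W)$, so the height hypotheses $h_{\phi(f_i)}(\cdot,W)\ge\sum_{a\in A}\tau(a)$, and the gap $\sum_{a\in A}\tau(a)$ between the two heights in the last clause, are exactly what makes these lifts fit inside the repeated layers. (III)~Lemma~\ref{lem:path_towards_cut}, giving an $\hat s$--$\tail(a^{\theta'})$ path for each non-congested cut arc $a^{\theta'}$ of the right non-repeated part and a $\head(a^{\theta'})$--$\hat t$ path for each one of the left non-repeated part.

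Now consider the clauses with a node $v\in V(W)$, $v\in\dynN_{\phi(f_i)}(s)$, $h_{\phi(f_i)}(v,W)\ge\sum_{a\in A}\tau(a)$. Let $v^{\theta_v}$ be the copy of $v$ on $C'$; then $\theta_v\ge\theta_1^i+\sum_{a\in A}\tau(a)$, so lifting an $s$--$v$ path by~(II) gives an $\hat s$--$v^{\theta_v}$ path $Q_s$ in $\TEN{\theta}_{f_i}$. If $\hat t\in V(C)$, then $\pi(C)$ is a closed walk through $t$, so every node of $\pi(C)$, and in particular $v$, lies in $V_{\phi(f_i)}(t)$, contradicting~(I). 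If $C$ has a component in the right non-repeated part $\TENS{\theta_2^i}{\theta}_{f_i}$, I would follow $C$ onward from $v^{\theta_v}$ into that part and, using $C$ itself and Lemma~\ref{lem:path_towards_cut} there, splice $Q_s$ with the remaining pieces into an $\hat s$--$\hat t$ path. And if there were a node $v'\in V(W)$ with $v'\in\dynN_{\phi(f_i)}(t)$ and $h_{\phi(f_i)}(v',W)\ge\sum_{a\in A}\tau(a)+h_{\phi(f_i)}(v,W)$, its copy ${v'}^{\theta_{v'}}$ on $C'$ would satisfy $\theta_{v'}\ge\theta_v+\sum_{a\in A}\tau(a)$, so~(II) gives a ${v'}^{\theta_{v'}}$--$\hat t$ path $Q_t$; since $v^{\theta_v}$ and ${v'}^{\theta_{v'}}$ both lie on the path $C'$, one can travel from one to the other along $C'$ (or the long way around $C$), and $Q_s$, that travel, and $Q_t$ concatenate to an $\hat s$--$\hat t$ path, the desired contradiction. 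The two ``similarly'' clauses are these under interchanging $s\leftrightarrow t$ and the two non-repeated parts. The first bullet is handled in the same spirit: $\hat s\in V(C)$ forces $\hat t\notin V(C)$ (as $t\notin V_{\phi(f_i)}(s)$) and pins the $\pi$-image of every repeated portion of $C$ inside $\dynN_{\phi(f_i)}(s)$, so a component of $C$ inside $\TENS{1}{\theta_2^i}_{f_i}$ would, together with Lemma~\ref{lem:path_towards_cut}, again complete an $\hat s$--$\hat t$ path; the $\hat t$ clause is the time reversal.

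The step I expect to be the main obstacle is the layer bookkeeping inside~(II): one must verify, for every lifted $s$--$v$ and $v$--$t$ path and every connecting piece supplied by Lemma~\ref{lem:path_towards_cut}, that once it is anchored at the copy of the node actually used by $C'$ the whole lift stays within the layers where the residual network is known to have the structure being exploited — the repeated interval $[\theta_1^i,\theta_2^i]$, or the non-repeated part handled by Lemma~\ref{lem:path_towards_cut} — and that these pieces can be threaded into a single $\hat s$--$\hat t$ path. This is exactly where the height threshold $\sum_{a\in A}\tau(a)$, the bound $\theta_2^i-\theta_1^i>2J(h_i)$ on the length of the repeated interval, and the standing assumption~\eqref{eq:bound_on_TH} on $\theta$ (which keeps the non-repeated parts long enough for Lemma~\ref{lem:path_towards_cut}) all enter. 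What remains is then a finite case analysis over which of $\hat s,\hat t$ lies on $C$ and whether the reachable nodes in question sit in the repeated part or in a non-repeated part.
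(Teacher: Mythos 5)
Your overall strategy is the paper's: anchor a lifted $s$--$v$ (resp.\ $v'$--$t$) residual path of $\dynN_{\phi(f_i)}$ at the copy of $v$ on $C'$ (which the height hypotheses guarantee fits inside the repeated layers), use Lemma~\ref{lem:path_towards_cut} for non-congested cut arcs in the non-repeated parts, and splice these pieces with portions of $C$ into an $\hat{s}$--$\hat{t}$ residual path, contradicting the maximality of $f_i$. That matches the paper's proof almost step for step, and the splicing arguments you sketch for the cut-arc clauses and for the ``no $v'$ at height $\geq \sum_{a\in A}\tau(a)+h_{\phi(f_i)}(v,W)$'' clause are sound. (One reading note: the ``$C''=\emptyset$'' clauses can only be established in the form the paper's proof actually gives, namely that $C''$ contains no non-congested arc of $\lift(\cut(\dynN))$; neither your argument nor the paper's rules out components of $C$ in the non-repeated part that avoid all cut arcs.)

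There is, however, one genuinely flawed step: for the subcase $\hat{t}\in V(C)$ you argue that ``$\pi(C)$ is a closed walk through $t$, so every node of $\pi(C)$, and in particular $v$, lies in $V_{\phi(f_i)}(t)$,'' and you reuse the same idea for the first bullet (``$\hat{s}\in V(C)$ pins the $\pi$-image of every repeated portion of $C$ inside $\dynN_{\phi(f_i)}(s)$''). This presumes that all of $\pi(C)$ is a walk in the \emph{static} residual network $\dynN_{\phi(f_i)}$. That is only guaranteed for the repeated components of $C$: for arcs $a^{\theta'}$ with $\theta'$ in a non-repeated part, $f_i(a^{\theta'})$ may differ from $\phi(f_i)(a)=f_i(a^{\theta_1^i})$, so an arc can be residual in $\TEN{\theta}_{f_i}$ while its projection is not residual in $\dynN_{\phi(f_i)}$ (this is exactly why the paper routes all such arguments through $\cut(\dynN)$ and Lemma~\ref{lem:path_towards_cut}). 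The conclusion you want is still reachable with your own ingredient (II): since $v\in\dynN_{\phi(f_i)}(s)$ with $h_{\phi(f_i)}(v,W)\geq\sum_{a\in A}\tau(a)$, the lifted $\hat{s}$--$v^{\theta_v}$ path $Q_s$ followed by the portion of the residual cycle $C$ from $v^{\theta_v}$ to $\hat{t}$ is already an $\hat{s}$--$\hat{t}$ residual walk, hence an augmenting path — no appeal to Observation~\ref{obs:disjoint} on the whole of $\pi(C)$ is needed. With that repair your proof coincides with the paper's.
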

\begin{proof}
    We start by proving the first part of the lemma.
    We only consider the case that $\hat{s} \in C$.
    The other case is symmetric.
    For the purpose of deriving a contradiction suppose that $C'' = C \cap \TENS{1}{\theta_2^i}_{f_i}$ contains an arc from $\lift(\cut(\dynN))$.
    In particular, there is an arc $a \in \cut(\dynN)$ and $\theta' \in [1,\theta_1^i)$ such that $a^{\theta'}$ is not fully congested in $f_i$.
    Thus, by Lemma~\ref{lem:path_towards_cut} there exists an $\head(a^{\theta'})$-$\hat{t}$ path $Q$ in $\TEN{\theta}_{f_i}$.
    Together with the subpath of $C$ starting in $\hat{s}$ and ending in $\head(a^{\theta'})$ this gives an $\hat{s}$-$\hat{t}$ augmenting path in $\TEN{\theta}$, contradicting the maximality of $f_i$.

    For the second part, we only consider the case that there is a $v \in W$ with $v \in \dynN_{\phi(f_i)}(t)$ such that $h_{\phi(f_i)}(v,W)\geq \sum_{a \in A} \tau(a)$
    The other case can be shown by the same arguments just swapping $s$ and $t$.
    By the definition of $\dynN_{\phi(f_i)}(t)$ and Lemma~\ref{lem:invariant_under_augmentations} and since $h_{\phi(f_i)}(v,W)\geq \sum_{a \in A} \tau(a)$ there is a $v$-$t$ path $P_t$ in $\dynN_{\phi(f_i)}(t)$ that can be lifted to $\TENS{\theta_1^i}{\theta_2^i}_{f_i}$ in such a way that it connects to $C$.
    If $C$ contains $\hat{s}$, this lifting of $P_t$ results in an $\hat{s}$-$\hat{t}$ augmenting path contradicting the maximality of $f_i$.
    Next assume that $C'' = C \cap \TENS{\theta_2^i}{\theta}_{f_i}$ and suppose that $C''$ contains a forward arc from $\lift(\cut(\dynN))$.
    In particular, there is an arc $a \in \cut(\dynN)$ and $\theta' \in [\theta_2^i,\theta)$ such that $a^{\theta'}$ is not fully congested in $f_i$.
    Lemma~\ref{lem:path_towards_cut} yields that there is an $\hat{s}$-$\tail(a^{\theta'})$ path $P$ in $\TENS{1}{\theta}_{f_i}$.
    Overall, the lifting of $P_t$ and the path $P$ result in an $\hat{s}$-$\hat{t}$ path, contradicting the maximality of $f_i$.

     If there were a $v' \in W$ with $v' \in \dynN_{\phi(f_i)}(s)$ such that $h_{\phi(f_i)}(v',W)\geq \sum_{a \in A} \tau(a)+h_{\phi(f_i)}(v,W)$, the $s$-$v'$ path $P_s$ in $\dynN_{\phi(f_i)}(s)$ be can be lifted to $\TENS{\theta_1^i}{\theta_2^i}_{f_i}$ in such a way that it connects to $C$ and does not interfere with the lifting of $P_t$.
    This results in an $\hat{s}$-$\hat{t}$ path, contradicting the maximality of $f_i$.

\end{proof}

\subsection{Bounding the Cost of Each Component}
Let $i+1$ be an iteration in which Step 2 is visited, suppose that $f_i$ is repeated during $[\theta_1^i,\theta_2^i]$ with $\theta_2^i-\theta_1^i > 2J(h_i)$ and $h_i = \max\{\theta_1^i,\theta-\theta_2^i\}$.
Let $C$ be a negative cost cyclic in $\TEN{\theta}_{f_i}$.
Our goal is to prove that $h^\theta_{f_i}(C) \leq J(h_i)$ with $h_i = \max\{\theta_1^i,\theta-\theta_2^i\}$ if $C$ is in $\TENS{1}{\theta_2^i}_{f_i}$ or $\TENS{\theta_1^i}{\theta}_{f_i}$ in order to conclude that $f_{i+1}$ is still repeated.

We start by deriving lower bounds on the cost of the repeated components of $C$ that will turn out useful in the analysis of their heights.
For later use we focus on general negative cost cycles in $\TEN{\theta}_{f_i}$ and do not only stick to negative cost cycles in $\TENS{1}{\theta_2^i}_{f_i}$ or in  $\TENS{\theta_1^i}{\theta}_{f_i}$.
Thus, in this subsection $C$ is a negative cost cycle in $\TEN{\theta}_{f_i}$ if not stated otherwise.
\begin{lemma}\label{lem:walk_in_Ns_or_Nt_cost}
    Let $W$ be a walk in $\dynN_{\phi(f_i)}(s)$ or $\dynN_{\phi(f_i)}(t)$.
    Then $c(W) \geq -\sum_{a \in A}|c(a)|u(a)$.
\end{lemma}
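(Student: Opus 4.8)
The plan is to bound the cost of $W$ by decomposing it into a path and cycles via Lemma~\ref{lem:decomposition_of_walk}, and then to show that both the path and the cycles contribute nonnegatively to the cost, except for a bounded negative contribution coming from the path.

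First I would observe that by Lemma~\ref{lem:decomposition_of_walk}, the walk $W$ decomposes into a (possibly trivial) path $P$ together with cycles $B_1,\ldots,B_\ell$, all living inside $\dynN_{\phi(f_i)}(s)$ (or $\dynN_{\phi(f_i)}(t)$), so that $c(W) = c(P) + \sum_{j=1}^\ell c(B_j)$. The key claim is that each $B_j$ has nonnegative cost: $c(B_j) \ge 0$. Indeed, $\phi(f_i)$ is a maximum $s$-$t$ flow in $\dynN$ obtained from $\phi(f_0)$ by a sequence of augmentations along cycles (Lemma~\ref{lem:maxflow_induces_max_flow} together with Lemma~\ref{lem:lifing_lemma}); and in Step~1.1 we augment along all negative cost cycles with zero transit time while in Step~1.2 we exhaust all $s$- or $t$-reachable negative cost cycles. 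Since every cycle $B_j$ inside $\dynN_{\phi(f_i)}(s)$ is $s$-reachable (any node of it is reachable from $s$ by definition of $V_{\phi(f_i)}(s)$, and $s$ is reachable from it by following an $s$-$t$ path of the flow decomposition backwards, as in the proof of the Projection Lemma), a negative cost such cycle would still be present in $\dynN_{\phi(f_i)}$, contradicting that Step~1.2 has terminated at iteration $j_2 \le i$. Hence $c(B_j) \ge 0$ for all $j$, and therefore $c(W) \ge c(P)$.

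It then remains to bound $c(P)$ from below, where $P$ is a simple path in $\dynN_{\phi(f_i)}$. Since $P$ uses each residual arc at most once, its cost is at least $-\sum_{a \in A_{\phi(f_i)}, c(a) < 0} |c(a)| \ge -\sum_{a \in A} |c(a)| u(a)$: a forward residual copy of $a$ contributes $c(a)$ and a backward residual copy contributes $-c(a)$, and for each $a \in A$ only one of these two possible contributions can be negative, with absolute value $|c(a)|$; moreover the residual capacity on that arc is at most $u(a)$, but since $P$ is simple it uses the arc only once anyway, so the bound $\sum_{a \in A} |c(a)| u(a)$ is more than enough. Combining, $c(W) \ge c(P) \ge -\sum_{a \in A} |c(a)| u(a)$, as desired. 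The case of a walk in $\dynN_{\phi(f_i)}(t)$ is handled identically, replacing $s$-reachability by $t$-reachability throughout.

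The main obstacle is the claim that every cycle $B_j$ inside $\dynN_{\phi(f_i)}(s)$ is $s$-reachable and hence, if of negative cost, would have been eliminated by Step~1.2 before iteration $i$: one must be careful that ``Step~1.2 has terminated'' really means no $s$- or $t$-reachable negative cost cycle remains in $\dynN_{\phi(f_i)}$ for \emph{every} $i \ge j_2$, which follows because in Step~2 the projection $\phi(f_i)$ is not changed (the cycles along which we augment in Step~2 live in the time-expanded network but project, via the Lifting/Projection correspondence, only to walks that do not create new negative cycles in $\dynN_{\phi(f_i)}$) — or, more simply, because $\overline{\dynN}_{\phi(f_i)}$ and the reachability structure are invariant under the cycle augmentations performed (Lemma~\ref{lem:invariant_under_augmentations} and Lemma~\ref{Cor:no_neg_transit_time_cycle}). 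Once this invariance is in hand, the rest is the routine arc-counting bound above.
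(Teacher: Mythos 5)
Your overall skeleton matches the paper's: decompose $W$ into a path $P$ plus cycles, bound $c(P)\geq -\sum_{a\in A}|c(a)|u(a)$ by the arc-counting argument, and argue that the cycles have nonnegative cost because Step~1.2 has exhausted all $s$- and $t$-reachable negative cost cycles. The path bound and the ``Step~1.2 has terminated'' reasoning are fine, and your closing worry about the invariance of this property through Step~2 is a legitimate point that the paper also relies on.

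However, there is a genuine gap in the parenthetical where you claim that \emph{every} cycle $B_j$ inside $\dynN_{\phi(f_i)}(s)$ is $s$-reachable. By the paper's definition, $s$-reachability of $B_j$ requires both an $s$-$u$ path \emph{and} a $u$-$s$ path in $\dynN_{\phi(f_i)}$ for some $u\in V(B_j)$. Membership in $V_{\phi(f_i)}(s)$ only gives you the first direction; the return path to $s$ is not automatic. Your proposed fix --- ``follow an $s$-$t$ path of the flow decomposition backwards'' --- presupposes that you can first reach such a flow-carrying path from $B_j$, which is exactly the extra hypothesis in part (c) of the Projection Lemma (an arc of $C$ induced by an $\hat{s}$-$\hat{t}$ path carrying flow) and is not available for an arbitrary cycle in $\dynN_{\phi(f_i)}(s)$. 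A concrete counterexample: take $V=\{s,v,w,t\}$ with arcs $(s,t)$, $(s,v)$, $(v,w)$, $(w,v)$, all of capacity $1$; the maximum flow sends one unit on $(s,t)$, so $v,w\in V_{\phi(f_i)}(s)$ and the $2$-cycle $v\to w\to v$ lies in $\dynN_{\phi(f_i)}(s)$, yet no arc enters $s$ in the residual network from $\{v,w\}$, so this cycle is not $s$-reachable and Step~1.2 would never have eliminated it even if it had negative cost. This is precisely why the paper does not prove reachability from the definition of $\dynN_{\phi(f_i)}(s)$ but instead invokes Lemma~\ref{lem:cycles_are_reachable}, whose proof uses that $W$ arises as $\pi(C')$ for a repeated component $C'$ of a negative cost cycle $C$ in the time-expanded network and performs a case analysis on the structure of $C$ (whether it contains $\hat{s}$ or $\hat{t}$, or crosses all three areas) to manufacture the missing return path. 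To close your argument you must either import that lemma, or restrict the statement to walks of the form $W=\pi(C')$ and reproduce its case analysis; as written, the claim ``$c(B_j)\geq 0$'' is unsupported for cycles that are not reachable back to $s$ (respectively $t$).
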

\begin{proof}
    $W$ decomposes into a path $P$ with $c(P) \geq -\sum_{a \in A}|c(a)|u(a)$ and cycles that are $s$- or $t$-reachable by Lemma~\ref{lem:cycles_are_reachable} and thus fulfill $c(B) \geq 0$ by construction.
\end{proof}
\paragraph{Components with Strictly Negative Transit Time.}

We start by proving the following lemma about the structure of a repeated component of $C$ with strictly negative transit time.
\begin{lemma}\label{lem:structure_of_connected_component_neg}
    Let $C'$ be a repeated component of $C$ such that $\tau(W) < 0$ with $W:= \pi(C')$.
    We distinguish the following cases:
    \begin{enumerate}[label = (\arabic*)]
        \item $W$ lies completely in $\dynN_{\phi(f_i)}(s)$ or $\dynN_{\phi(f_i)}(t)$
        \item $W = W(t) \oplus \overline{W} \oplus{W}(s)$ with $W(t) \subseteq \dynN_{\phi(f_i)}(t)$, $W(s) \subseteq \dynN_{\phi(f_i)}(s)$ and $\overline{W} \subseteq \overline{\dynN}_{\phi(f_i)}$, $h_{\phi(f_i)}(W(t)) < \sum_{a \in A}\tau(a)$ and $h_{\phi(f_i)}(W(t) \oplus \overline{W}) < 2\sum_{a \in A}\tau(a)$ 
        \item $W = W(t) \oplus \overline{W} \oplus{W}(s)$ with $W(t) \subseteq \dynN_{\phi(f_i)}(t)$, $W(s) \subseteq \dynN_{\phi(f_i)}(s)$ and $\overline{W} \subseteq \overline{\dynN}_{\phi(f_i)}$, $h_{\phi(f_i)}(W(s)) < \sum_{a \in A}\tau(a)$ and $h_{\phi(f_i)}(\overline{W} \oplus W(s)) < 2\sum_{a \in A}\tau(a)$
    \end{enumerate}
    We have,
    \begin{itemize}
        \item If $C'$ is in $\TENS{\theta_1^i}{\theta}_{f_i}$, then $W$ is of the form (1) or (2).
        \item If $C'$ is in $\TENS{1}{\theta_2^i}_{f_i}$, then $W$ is of the form (1) or (3).
        \item If $C'$ intersects $\TENS{1}{\theta_1^i}_{f_i}$, $\TENS{\theta_1^i}{\theta_2^i}_{f_i}$ and $\TENS{\theta_2^i}{\theta}_{f_i}$, then $W$ is of the form (1), (2) or (3).
    \end{itemize}
\end{lemma}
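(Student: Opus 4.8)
The plan is to analyze the structure of the walk $W = \pi(C')$ by tracking how it moves between the three ``regions'' $\dynN_{\phi(f_i)}(s)$, $\overline{\dynN}_{\phi(f_i)}$ and $\dynN_{\phi(f_i)}(t)$ of the residual network $\dynN_{\phi(f_i)}$. Since $\phi(f_i)$ is a maximum $s$-$t$ flow (Lemma~\ref{lem:maxflow_induces_max_flow} together with Lemma~\ref{lem:lifing_lemma}), every minimum $s$-$t$ cut is saturated, so by Observation~\ref{obs:disjoint} the only residual arcs crossing between these regions go ``inward'': from $\dynN_{\phi(f_i)}(t)$ into $\overline{\dynN}_{\phi(f_i)}$, from $\overline{\dynN}_{\phi(f_i)}$ into $\dynN_{\phi(f_i)}(s)$, and (possibly) directly from $\dynN_{\phi(f_i)}(t)$ into $\dynN_{\phi(f_i)}(s)$. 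Hence as we traverse $W$ from start to finish, the sequence of regions it visits is monotone in the order $t \to \overline{(\cdot)} \to s$: once $W$ leaves the $t$-region it never returns, and once it enters the $s$-region it stays. This already forces the decomposition $W = W(t) \oplus \overline{W} \oplus W(s)$ with the three pieces lying in $\dynN_{\phi(f_i)}(t)$, $\overline{\dynN}_{\phi(f_i)}$, $\dynN_{\phi(f_i)}(s)$ respectively (any of which may be empty); case~(1) is exactly the situation where $\overline{W} = \emptyset$ and $W$ stays entirely in one region.

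The next step is to bound the heights of the initial segments $h_{\phi(f_i)}(W(t))$ and $h_{\phi(f_i)}(W(t) \oplus \overline{W})$ (and symmetrically $h_{\phi(f_i)}(W(s))$ and $h_{\phi(f_i)}(\overline{W} \oplus W(s))$) using Lemma~\ref{lem:augmenting_path}. The idea is that if, say, $h_{\phi(f_i)}(W(t))$ were at least $\sum_{a \in A}\tau(a)$, then there would be a node $v \in W(t) \subseteq \dynN_{\phi(f_i)}(t)$ with $h_{\phi(f_i)}(v,W) \geq \sum_{a \in A}\tau(a)$; the second bullet of Lemma~\ref{lem:augmenting_path} then says $\hat{s} \notin C$, that $C$ does not intersect $\TENS{1}{\theta_1^i}_{f_i}$, and — crucially — that there is no later node $v' \in W$ in $\dynN_{\phi(f_i)}(s)$ with $h_{\phi(f_i)}(v',W) \geq \sum_{a \in A}\tau(a) + h_{\phi(f_i)}(v,W)$. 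Because $\tau(W) < 0$, i.e. the walk ends \emph{below} where it starts, and $W(t)$ is an initial segment, the terminal portion $W(s)$ must climb back up high enough to reach $\low(W)$ relative to where $W(t)$ ends; combined with the preceding restriction this produces a contradiction once the relevant segment is too tall, which pins down the stated bounds $h_{\phi(f_i)}(W(t)) < \sum_{a\in A}\tau(a)$ and $h_{\phi(f_i)}(W(t)\oplus\overline{W}) < 2\sum_{a\in A}\tau(a)$ in case~(2), and the symmetric bounds in case~(3).

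Finally I would match up which case applies with the location of $C'$. If $C'$ lies in $\TENS{\theta_1^i}{\theta}_{f_i}$, then $C$ does not meet $\TENS{1}{\theta_1^i}_{f_i}$, so in the dichotomy above the ``high $W(s)$'' branch is the one ruled out by Lemma~\ref{lem:augmenting_path} (whose hypothesis about $v' \in \dynN_{\phi(f_i)}(s)$ would otherwise let us build an $\hat s$-$\hat t$ augmenting path), leaving form~(1) or form~(2); symmetrically $C'$ in $\TENS{1}{\theta_2^i}_{f_i}$ gives form~(1) or~(3); and a component crossing all three of $\TENS{1}{\theta_1^i}_{f_i}$, $\TENS{\theta_1^i}{\theta_2^i}_{f_i}$, $\TENS{\theta_2^i}{\theta}_{f_i}$ may be of any of the three forms. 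The main obstacle I expect is the bookkeeping in the second step: carefully relating the heights $h_{\phi(f_i)}(v,W)$ of nodes along $W$ (which are partial sums of transit times, hence can decrease as well as increase when residual backward arcs with negative transit time are traversed) to the layer indices in the time-expanded picture and to $\low(W)$ and $\high(W)$, so that the ``too tall'' segments genuinely force the augmenting-path contradiction of Lemma~\ref{lem:augmenting_path}. The monotonicity of regions along $W$ and the sign condition $\tau(W)<0$ are the two structural facts that make this work; everything else is a careful but routine case analysis.
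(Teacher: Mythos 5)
Your overall strategy matches the paper's: use the saturation of all minimum $s$-$t$ cuts by the maximum flow $\phi(f_i)$ to force the monotone region decomposition $W = W(t)\oplus\overline{W}\oplus W(s)$, then invoke Lemma~\ref{lem:augmenting_path} to cap the heights of the $t$-side and $s$-side pieces, and finally tie the admissible forms to where $C'$ sits in the time-expanded network. Your monotonicity argument via Observation~\ref{obs:disjoint} is in fact a cleaner derivation of the decomposition than the paper's (which argues from the endpoints of $C'$).

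There is, however, one concrete gap: you attribute the bounds $h_{\phi(f_i)}(W(t)\oplus\overline{W}) < 2\sum_{a\in A}\tau(a)$ (and symmetrically $h_{\phi(f_i)}(\overline{W}\oplus W(s)) < 2\sum_{a\in A}\tau(a)$) to the combination of Lemma~\ref{lem:augmenting_path} and the sign condition $\tau(W)<0$. But Lemma~\ref{lem:augmenting_path} only constrains nodes lying in $\dynN_{\phi(f_i)}(s)$ or $\dynN_{\phi(f_i)}(t)$; it says nothing about the middle segment $\overline{W}\subseteq\overline{\dynN}_{\phi(f_i)}$, which a priori could descend arbitrarily far by winding around negative-transit-time cycles without ever producing the augmenting-path contradiction you rely on. The paper closes exactly this hole with Lemma~\ref{Cor:no_neg_transit_time_cycle}: the invariant maintained through Steps 1.1 and 1.2 that $\overline{\dynN}_{\phi(f_i)}$ contains no cycle of negative transit time, whence $h^-_{\phi(f_i)}(\overline{W})\geq -\sum_{a\in A}\tau(a)$ relative to its entry point, and only then does the $2\sum_{a\in A}\tau(a)$ bound follow from stacking it on top of $h_{\phi(f_i)}(W(t))<\sum_{a\in A}\tau(a)$. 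Your ``$W(s)$ must climb back up'' heuristic cannot substitute for this invariant, so you need to add the appeal to Lemma~\ref{Cor:no_neg_transit_time_cycle} explicitly. (A smaller point of the same flavor: deducing from $h_{\phi(f_i)}(W(t))\geq\sum_{a\in A}\tau(a)$ that some node of $W(t)$ has $h_{\phi(f_i)}(v,W)\geq\sum_{a\in A}\tau(a)$ uses that the initial segment starts at height $0$ and cannot drop too far below it, which again needs a no-negative-cycle-type argument rather than Lemma~\ref{lem:augmenting_path}.)
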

\begin{proof}
    Since $\phi(f_i)$ is a maximum $s$-$t$ flow by Lemma~\ref{lem:maxflow_induces_max_flow}, it is immediate that $W\cap \cut(\dynN) = \emptyset$.
    This directly implies that $C' \cap \lift(\cut(\dynN)) = \emptyset$.

    We first assume that $C'$ is in $\TENS{\theta_1^i}{\theta}_{f_i}$.
    In this case $C'$ is a path that ends in layer $\mu$ with $\theta_1^i \leq \mu < \theta_2^i$ and starts in layer $\theta_2^i$.
    In particular, $W$ ends with $s$ or $t$.
    If $W$ ends in $t$, then $W$ is contained in $\dynN_{\phi(f_i)}(t)$ as $W \cap \cut(\dynN) = \emptyset$.
    We next consider the case that $W$ ends in $s$.
    Suppose that $W$ does not lie in $\dynN_{\phi(f_i)}(s)$ and that $W$ starts in $\dynN_{\phi(f_i)}(t)$.
    We consider two subcases.
    Define $W(t) := W \cap \dynN_{\phi(f_i)}(t)$.
    As $\hat{s} \in C$, Lemma~\ref{lem:augmenting_path} implies that $h_{\phi(f_i)}(W(t)) < \sum_{a \in A} \tau(a)$.
    Define $\overline{W} := W \cap \overline{\dynN}_{\phi(f_i)}$.
    Note that $h^-_{\phi(f_i)}(\overline{W}) \geq - \sum_{a \in A}\tau(a)$ as otherwise the decomposition of $\overline{W}$ would contain a cycle $B$ with $\tau(B) < 0$ contradicting Lemma~\ref{Cor:no_neg_transit_time_cycle}.
    Thus, $h_{\phi(f_i)}(W(t) \oplus \overline{W}) <  2 \sum_{a \in A}\tau(a) $.
     If $W$ starts in $\overline{\dynN}_{\phi(f_i)}$, then we have $h_{\phi(f_i)}(W(t)) = 0$ and thus the statement of the lemma follows as above.
     Overall, $W$ can take forms (1) or (2) if $C'$ is in $\TENS{\theta_1^i}{\theta}_{f_i}$.
    If $C'$ is in $\TENS{1}{\theta_2^i}_{f_i}$, then by reversing the network the proof above shows that $W$ can take forms (1) or (3).

    Finally, we consider the case that $C$ intersects $\TENS{1}{\theta_1^i}_{f_i}$, $\TENS{\theta_1^i}{\theta_2^i}_{f_i}$ and $\TENS{\theta_2^i}{\theta}_{f_i}$.
    Suppose that $W$ does not lie completely in $\dynN_{\phi(f_i)}(s)$ or $\dynN_{\phi(f_i)}(t)$.
    With the purpose of deriving a contradiction suppose that $W = W(t) \oplus \overline{W} \oplus W(s)$ with $h_{\phi(f_i)}(W(s)), h_{\phi(f_i)}(W(t)) \geq \sum_{a \in A} \tau(a)$.
    Since $\theta_2^i-\theta_1^i > 2J(h_i)$ by assumption, we can identify $v \in W(t)$ with $h(v,W) \geq \sum_{a \in A}\tau(A)$ and  $v' \in W(s)$ with $h_{\phi(f_i)}(v',W) \geq h_{\phi(f_i)}(v,W)+ \sum_{a \in A}\tau(A)$.
    This is a contradiction by Lemma~\ref{lem:augmenting_path}.
    Thus, we have $h_{\phi(f_i)}(W(s)) < \sum_{a \in A} \tau(a)$ or $h_{\phi(f_i)}(W(t)) < \sum_{a \in A} \tau(a)$.
    If $h_{\phi(f_i)}(W(t)) < \sum_{a \in A} \tau(a)$, we can proceed as for the case that $C'$ is in $\TENS{\theta_1^i}{\theta}_{f_i}$ to deduce that $W$ is of the form (2).
    Similarly, if $h_{\phi(f_i)}(W(s)) < \sum_{a \in A} \tau(a)$, we can proceed as for the case that $C'$ is in $\TENS{1}{\theta_2^i}_{f_i}$ to deduce that $W$ is of the form (3).

\end{proof}
\begin{rem}\label{rem}
    In the following we will occasionally have to distinguish whether $C'$ is in $\TENS{\theta_1^i}{\theta}_{f_i}$ or $\TENS{1}{\theta_2^{i}}_{f_i}$.
    We will then consider only one of the cases. 
    This is without loss of generality by reversing the network.
\end{rem}

\begin{lemma}\label{lem:cost_of_component_neg}
    Let $C'$ be a repeated component of $C$ such that $\tau(W) < 0$ with $W:= \pi(C')$.
    Then we either have
    \begin{enumerate}[label = (\alph*)]
        \item $c(W) \geq -\sum_{a \in A} |c(a)|u(a)$ if $W$ lies in $\dynN_{\phi(f_i)}(s)$ or in $\dynN_{\phi(f_i)}(t)$ or
        \item  $$c(W) \geq \left( - 2\sum_{a \in A}\tau(a) -2  \right) \cdot \sum_{a \in A}|c(a)|u(a).$$
        with $c(W(t)) \geq -\sum_{a\in A}|c(a)|u(a) $, 
        \begin{align*}
        c(\overline{W}) \geq \left( - 2\sum_{a \in A}\tau(a) \right) \cdot \sum_{a \in A}|c(a)|u(a),
        \end{align*}
        and $c(W(s)) \geq -\sum_{a\in A}|c(a)|u(a) $ if $W$ is of the form (2) or (3) from Lemma~\ref{lem:structure_of_connected_component_neg}.
    \end{enumerate}

\end{lemma}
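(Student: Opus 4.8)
The plan is to split along the trichotomy furnished by Lemma~\ref{lem:structure_of_connected_component_neg}. If $W=\pi(C')$ is of form~(1), i.e.\ entirely contained in $\dynN_{\phi(f_i)}(s)$ or in $\dynN_{\phi(f_i)}(t)$, then Lemma~\ref{lem:walk_in_Ns_or_Nt_cost} applies directly and gives $c(W)\ge -\sum_{a\in A}|c(a)|u(a)$, which is alternative~(a). So it remains to treat the case $W=W(t)\oplus\overline{W}\oplus W(s)$ as in form~(2) or~(3); by Remark~\ref{rem} I would carry out the argument for form~(2) only, form~(3) following by reversing the network and interchanging $s$ and $t$.

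For the two outer pieces I would invoke Lemma~\ref{lem:walk_in_Ns_or_Nt_cost} once more: since $W(t)$ is a walk in $\dynN_{\phi(f_i)}(t)$ and $W(s)$ a walk in $\dynN_{\phi(f_i)}(s)$, one gets $c(W(t))\ge -\sum_{a\in A}|c(a)|u(a)$ and $c(W(s))\ge -\sum_{a\in A}|c(a)|u(a)$, which are exactly the first and third estimates claimed in~(b). Everything then reduces to bounding $c(\overline{W})$, where $\overline{W}\subseteq\overline{\dynN}_{\phi(f_i)}$ and, by Lemma~\ref{lem:structure_of_connected_component_neg}, $h_{\phi(f_i)}(\overline{W})\le h_{\phi(f_i)}(W(t)\oplus\overline{W})<2\sum_{a\in A}\tau(a)$.

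To bound $c(\overline{W})$ I would decompose this walk, via Lemma~\ref{lem:decomposition_of_walk}, into a path $P$ and cycles $B_1,\dots,B_\ell$ in $\overline{\dynN}_{\phi(f_i)}$. Since $P$ uses each arc of $\overline{\dynN}_{\phi(f_i)}^{u=1}$ at most once, $c(P)\ge -\sum_{a\in A}|c(a)|u(a)$. For the cycles, first note $\tau(B_j)\ge 0$ for every $j$ by Lemma~\ref{Cor:no_neg_transit_time_cycle}; moreover, if $c(B_j)<0$ then $\tau(B_j)\ge 1$, because a negative-cost cycle in $\overline{\dynN}_{\phi(f_i)}$ with zero transit time has height at most $\sum_{a\in A}\tau(a)$ and could therefore be lifted, via part~(1) of the Lifting Lemma (Lemma~\ref{lem:lifing_lemma}), into $\TENS{\theta_1^i}{\theta_2^i}_{f_i}$ to a negative-cost cycle whose projection lies in $\overline{\dynN}_{\phi(f_i)}$, contradicting Lemma~\ref{lem1}(a). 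A counting argument relating the transit times $\tau(B_j)$ and $\tau(P)$ to the height of $\overline{W}$ — using $h_{\phi(f_i)}(\overline{W})<2\sum_{a\in A}\tau(a)$, $\tau(P)\ge -\sum_{a\in A}\tau(a)$, and $\tau(B_j)\ge 0$ — then bounds the number of negative-cost cycles among $B_1,\dots,B_\ell$ by $2\sum_{a\in A}\tau(a)-1$. Since each such cycle contributes at least $-\sum_{a\in A}|c(a)|u(a)$, one obtains $c(\overline{W})\ge c(P)+\sum_{j:\,c(B_j)<0}c(B_j)\ge -\bigl(2\sum_{a\in A}\tau(a)\bigr)\cdot\sum_{a\in A}|c(a)|u(a)$, the middle estimate in~(b); adding the three estimates gives $c(W)\ge\bigl(-2\sum_{a\in A}\tau(a)-2\bigr)\sum_{a\in A}|c(a)|u(a)$.

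The main obstacle is this final counting step: converting the geometric height bound on $\overline{W}$ into a bound on the number of negative-cost cycles in its decomposition. It requires (i) establishing that such cycles have strictly positive transit time, which couples the lifting construction with Lemma~\ref{lem1}(a) and Lemma~\ref{Cor:no_neg_transit_time_cycle} for the current flow $f_i$ (so one must verify these invariants persist into Step~2), and (ii) carefully accounting for how the transit times of the decomposition cycles and of the simple path $P$ add up to the height of $\overline{W}$, so that the constant in the bound comes out as $2\sum_{a\in A}\tau(a)$ and not something larger.
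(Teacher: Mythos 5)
Your proposal matches the paper's proof: form (1) and the outer pieces $W(s),W(t)$ are handled by Lemma~\ref{lem:walk_in_Ns_or_Nt_cost}, and $c(\overline{W})$ is bounded by decomposing the middle walk into a simple path plus nonnegative-transit-time cycles and trading the height bound $h_{\phi(f_i)}(W(t)\oplus\overline{W})<2\sum_{a\in A}\tau(a)$ against the cost of those cycles. You are in fact slightly more careful than the paper, which leaves implicit the step you isolate explicitly (that a negative-cost cycle of zero transit time in $\overline{\dynN}_{\phi(f_i)}$ cannot occur at this stage); note only that your own accounting via $\tau(P)\ge-\sum_{a\in A}\tau(a)$ would, carried out literally, allow roughly $3\sum_{a\in A}\tau(a)$ rather than $2\sum_{a\in A}\tau(a)$ cycle traversals, a constant-factor looseness that the paper's proof shares.
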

\begin{proof}
    Depending on the structure of $W$ shown in Lemma~\ref{lem:structure_of_connected_component_neg} we are able to derive a lower bound on the cost of $W$.
    If $W$ lies in $\dynN_{\phi(f_i)}(s)$ or $\dynN_{\phi(f_i)}(t)$, the statement of the lemma follows with Lemma~\ref{lem:walk_in_Ns_or_Nt_cost}.

    By Remark~\ref{rem} we suppose that $W$ is of the form (2). 
    In this case $W = W(t) \oplus \overline{W} \oplus{W}(s)$ with $W(t) \subseteq \dynN_{\phi(f_i)}(t)$, $W(s) \subseteq \dynN_{\phi(f_i)}(s)$, $\overline{W} \subseteq \overline{\dynN}_{\phi(f_i)}$ with  $h_{\phi(f_i)}(W(t)) < \sum_{a \in A}\tau(a)$ and $h_{\phi(f_i)}(W(t) \oplus \overline{W}) < 2\sum_{a \in A}\tau(a)$.
    %
    %
    %
    %
    Thus, in the worst case $\overline{W}$ runs upwards a height of at most $2\sum_{a \in A} \tau(a)-1 $ by repeatedly traversing through cycles $\overline{B}$ with $\tau(\overline{B}) = 1$ and $c(\overline{B}) \geq -\sum_{a \in A}|c(a)|u(a)$.
    This, together with $c(\overline{P}) \geq -\sum_{a \in A} |c(a)|u(a)$, gives
    \begin{align*}
        c(\overline{W}) \geq  c(\overline{P}) + \left(-2\sum_{a \in A} \tau(a) +1\right)\cdot \sum_{a \in A} |c(a)|u(a) = \left( - 2\sum_{a \in A}\tau(a) \right) \cdot \sum_{a \in A}|c(a)|u(a).
    \end{align*}
    %
    %
    The cost of the walks $W(s)$ and $W(t)$ are lower bounded by $-\sum_{a \in A}|c(a)|u(a)$ by Lemma~\ref{lem:walk_in_Ns_or_Nt_cost}.
    Hence,
    $$c(W)  = c(W(t) \oplus \overline{W} \oplus W(t)) >  \left( - 2\sum_{a \in A}\tau(a) -2  \right) \cdot \sum_{a \in A}|c(a)|u(a).$$
\end{proof}
\paragraph{Components with Strictly Positive Transit Time.}
\begin{lemma}\label{lem:no_cycle_crossing_cuts_pos}
    Let $C'$ be a repeated component of $C$ such that $\tau(W) > 0$ with $W:= \pi(C')$.
    Then $W$ lies completely in $\dynN_{\phi(f_i)}(s)$ or $\dynN_{\phi(f_i)}(t)$.
    More precisely, if $C$ contains $\hat{s}$, then $W$ lies completely in $\dynN_{\phi(f_i)}(s)$.
    Similarly, if $C$ contains $\hat{t}$, then $W$ lies completely in $\dynN_{\phi(f_i)}(t)$.
    Additionally, we have $c(C') >  -\sum_{a \in A}|c(a)|u(a) $.
\end{lemma}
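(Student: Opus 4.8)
The plan is to locate the walk $W := \pi(C')$ inside the three regions $\dynN_{\phi(f_i)}(t)$, $\overline{\dynN}_{\phi(f_i)}$, $\dynN_{\phi(f_i)}(s)$ of the residual static network, and then read off the cost estimate from Lemma~\ref{lem:walk_in_Ns_or_Nt_cost}. By the reversal symmetry of Remark~\ref{rem} I would reduce to the situations in which $C$ stays in $\TENS{\theta_1^i}{\theta}_{f_i}$. Since $\phi(f_i)$ is a maximum $s$-$t$ flow (Lemma~\ref{lem:maxflow_induces_max_flow}), no residual arc crosses a minimum $s$-$t$ cut forward, so, exactly as in the proof of Lemma~\ref{lem:structure_of_connected_component_neg}, $W$ traverses the three regions \emph{in this order}: $W = W(t)\oplus\overline{W}\oplus W(s)$ with $W(t)\subseteq\dynN_{\phi(f_i)}(t)$, $\overline{W}\subseteq\overline{\dynN}_{\phi(f_i)}$, $W(s)\subseteq\dynN_{\phi(f_i)}(s)$, each piece possibly empty. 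The goal becomes to show that $\overline{W}=\emptyset$ and at most one of $W(s),W(t)$ is nonempty.

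Next I would use $\tau(W)>0$ to pin down the endpoints of $C'$. As a repeated component, $C'$ starts and ends in layer $\theta_1^i$, in layer $\theta_2^i$, or at an endpoint of $\tilde{C}$; and, as noted in the proof of Lemma~\ref{lem:num_of_connected_components}, a repeated component starting in layer $\theta_2^i$ has transit time at most $0$. Hence, since $\tau(W)>0$, $C'$ starts either in layer $\theta_1^i$ or at the starting node of $\tilde{C}$. If the latter, then $\hat{s}\in V(C)$ or $\hat{t}\in V(C)$ and the starting node of $C'$ is a copy of $s$ or of $t$, so $W$ starts at $s$ or at $t$; if it starts at $s$, then $W\subseteq\dynN_{\phi(f_i)}(s)$ because residual arcs cannot leave $\dynN_{\phi(f_i)}(s)$, which settles this case and in particular yields the refinement that $\hat{s}\in V(C)$ forces $W\subseteq\dynN_{\phi(f_i)}(s)$ (and symmetrically for $\hat{t}$, traversing the network backwards). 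So the remaining case is that $C'$ starts in layer $\theta_1^i$, and then, since $\tau(W)>0$ and $C'$ cannot descend below $\theta_1^i$, it must end in layer $\theta_2^i$; thus $C'$ bridges all $\theta_2^i-\theta_1^i>2J(h_i)$ layers between $\theta_1^i$ and $\theta_2^i$.

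In this remaining case I would reach a contradiction (or land in $\dynN_{\phi(f_i)}(t)$) via Lemma~\ref{lem:augmenting_path}. Because $C'$ spans more than $2J(h_i)$ layers, its end node has $W$-height at least $\sum_{a\in A}\tau(a)$; if this node, or any node that deep, lies in $\dynN_{\phi(f_i)}(t)$, lifting the corresponding $v$-$t$ path and attaching it to $C'$ produces an $\hat{s}$-$\hat{t}$ augmenting path in $\TEN{\theta}_{f_i}$, contradicting the maximality of $f_i$; if it lies in $\dynN_{\phi(f_i)}(s)$, then Lemma~\ref{lem:augmenting_path} forces $C\cap\TENS{\theta_2^i}{\theta}_{f_i}=\emptyset$, and together with $C\subseteq\TENS{\theta_1^i}{\theta}_{f_i}$ this makes $C$ a negative cost cycle inside the repeated part, impossible after Step~1 (Lemma~\ref{lem1}$(b)$). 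The only way to avoid these conclusions is $W(t)=W(s)=\emptyset$, i.e. $W\subseteq\overline{\dynN}_{\phi(f_i)}$; but $\overline{\dynN}_{\phi(f_i)}$ has no negative transit time cycle (Lemma~\ref{Cor:no_neg_transit_time_cycle}) and, by Step~1 together with the Lifting Lemma, no negative cost cycle either, so a walk in it decomposes into a bounded-height path plus cycles of nonnegative transit time and nonnegative cost, and a short reachability argument (as in Lemma~\ref{lem:cycles_are_reachable}) shows it cannot bridge $2J(h_i)$ layers without leaving $\overline{\dynN}_{\phi(f_i)}$, contradiction. Once $W\subseteq\dynN_{\phi(f_i)}(s)$ or $W\subseteq\dynN_{\phi(f_i)}(t)$ is established, Observation~\ref{obs:cycles} gives $c(C')=c(W)$, Lemma~\ref{lem:walk_in_Ns_or_Nt_cost} gives $c(W)\geq-\sum_{a\in A}|c(a)|u(a)$, and the strict inequality follows since the cycles in the path-plus-cycles decomposition have cost $\geq 0$ (Lemma~\ref{lem:cycles_are_reachable}) while the single simple path, carrying $\tau>0$, cannot traverse every negative arc at full capacity.

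I expect the main obstacle to be the bookkeeping in the third step: organizing the case distinction on whether $C$ meets $\hat{s}$, $\hat{t}$, both, or neither, and on which half of $\TEN{\theta}_{f_i}$ it occupies, and then in each case producing an actual node of $W$ at height at least $\sum_{a\in A}\tau(a)$ in the relevant region so that Lemma~\ref{lem:augmenting_path} applies. The hypothesis $\tau(W)>0$ alone only gives $\tau(W)\geq 1$; the leverage must come from combining it with the fact that a positive-transit repeated component necessarily runs between layers $\theta_1^i$ and $\theta_2^i$, which are more than $2J(h_i)$ apart, and turning that separation into the "deep node'' needed to invoke the augmenting-path and repeated-part arguments is the delicate part.
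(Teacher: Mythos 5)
Your overall strategy --- confine $W$ using the fact that $\phi(f_i)$ saturates every minimum $s$-$t$ cut, classify the endpoints of a positive-transit repeated component, and feed the resulting ``deep'' nodes into Lemma~\ref{lem:augmenting_path} --- is the same as the paper's, but the decisive step of your third paragraph fails. From a node $v$ of $W$ at height at least $\sum_{a\in A}\tau(a)$ in $\dynN_{\phi(f_i)}(t)$ you obtain, by lifting a $v$-$t$ path, only a residual path from $C'$ \emph{to} $\hat{t}$; to contradict the maximality of $f_i$ you also need a residual path \emph{from} $\hat{s}$ into $C$, and Lemma~\ref{lem:augmenting_path} produces the contradiction only in combination with $\hat{s}\in V(C)$, a non-congested lifted cut arc in $C\cap\TENS{1}{\theta_1^i}_{f_i}$, or a sufficiently deep node of $W$ in $\dynN_{\phi(f_i)}(s)$ \emph{below} $v$. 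In fact the configuration you rule out --- a component running from layer $\theta_1^i$ up to layer $\theta_2^i$ with $W\subseteq\dynN_{\phi(f_i)}(t)$ and $\hat{t}\in V(C)$ --- is exactly one of the admissible outcomes asserted by the lemma, so your trichotomy (deep $t$-node, deep $s$-node, or $W\subseteq\overline{\dynN}_{\phi(f_i)}$, each allegedly contradictory) proves too much and cannot be repaired as stated. (Your deep-$s$-node branch also leans on the literal text ``$C\cap\TENS{\theta_2^i}{\theta}_{f_i}=\emptyset$'' of Lemma~\ref{lem:augmenting_path}, whereas its proof only establishes that this intersection contains no arc of $\lift(\cut(\dynN))$; that weaker conclusion does not confine $C$ to the repeated part.)

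Two further gaps. Your endpoint analysis lets $C'$ end only in layer $\theta_2^i$, but a positive-transit component starting in layer $\theta_1^i$ can also end at the endpoint of $\tilde{C}$, i.e.\ at a copy of $s$ or $t$ in an interior layer; this is the principal case of the paper's proof ($W$ ends with $s$ or $t$: ending with $t$ forces $W\subseteq\dynN_{\phi(f_i)}(t)$ because $W$ meets no forward cut arc, while ending with $s$ requires the backward-cut-arc argument via Lemma~\ref{lem:augmenting_path}). Relatedly, a component starting at a copy of $t$ does \emph{not} stay in $\dynN_{\phi(f_i)}(t)$ ``by symmetry'': residual arcs cannot enter $V_{\phi(f_i)}(t)$ from outside but can leave it, so the correct dual of ``starts at $s$'' is ``ends at $t$''. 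Finally, Remark~\ref{rem} only interchanges the two half-network cases; the case where $C$ meets all three of $\TENS{1}{\theta_1^i}_{f_i}$, $\TENS{\theta_1^i}{\theta_2^i}_{f_i}$ and $\TENS{\theta_2^i}{\theta}_{f_i}$ is self-symmetric, is needed later (Lemma~\ref{lem:big_pos_component}), and requires the separate analysis through the neighbouring negative-transit components and Lemma~\ref{lem:structure_of_connected_component_neg}; your proposal omits it entirely.
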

\begin{proof}
    Define $C'' := C\setminus (C \cap \TENS{\theta_1^i}{\theta_2^i}_{f_i})$.
    Since $\phi(f_i)$ is a maximum $s$-$t$ flow by Lemma~\ref{lem:maxflow_induces_max_flow}, it is immediate that $W\cap \cut(\dynN) = \emptyset$.
    This directly implies that $C' \cap \lift(\cut(\dynN)) = \emptyset$.

    By Remark~\ref{rem} $C'$ is in $\TENS{1}{\theta_2^i}_{f_i}$.
    In this case $C'$ is a path that starts in layer $\theta_1^i$ and ends in some layer $\mu$ with $\theta_1^i < \mu \leq \theta_2^i$.
    In particular, $W$ ends with $s$ or $t$.
    If $W$ ends with $t$, then $W \subseteq \dynN_{\phi(f_i)}(t)$ as $W \cap \cut(\dynN) = 0$.

    Assume $W$ ends with $s$. 
    For the purpose of deriving a contradiction suppose that $C'$ contains a backward arc from $\lift(\cut(\dynN))$.
    Thus, $W$ starts in $\overline{\dynN}_{\phi(f_i)}$ or in $\dynN_{\phi(f_i)}(t)$.
    In either case, as $C$ contains $\hat{s}$, we have $C'' \cap \lift(\cut(\dynN)) \neq \emptyset$ which is a contradiction by Lemma~\ref{lem:augmenting_path}.
    Hence, $C'$ does not contain a backward arc from $\lift(\cut(\dynN))$ and $W$ lies completely in $\dynN_{\phi(f_i)}(s)$.

    Finally, consider the case that $C'$ intersects $\TENS{1}{\theta_1^i}_{f_i}$, $\TENS{\theta_1^i}{\theta_2^i}_{f_i}$ and $\TENS{\theta_2^i}{\theta}_{f_i}$.
    To prove this case we have to consider several subcases.
    First we suppose that $C$ does \emph{not} contain $\hat{s}$ or $\hat{t}$.
    In this case $C$ also has a repeated component $C_2$ that runs from layer $\theta_2^i$ down to layer $\theta_1^i$.
    By Theorem~\ref{lem:structure_of_connected_component_neg} $W_2 = \pi(C_2)$ decomposes into $W_2 = W_2(t) \oplus \overline{W}_2 \oplus W_2(s)$ with $W_2(t) \subseteq \dynN_{\phi(f_i)}(t)$, $W_2(s) \subseteq \dynN_{\phi(f_i)}(s)$ and $\overline{W}_2 \subseteq \overline{\dynN}_{\phi(f_i)}$ such that either $h_{\phi(f_i)}(W_2(s))>\sum_{a \in A}\tau(a)$ or  $h_{\phi(f_i)}(W_2(t))>\sum_{a \in A}\tau(a)$.
    First suppose that $h_{\phi(f_i)}(W_2(s))>\sum_{a \in A}\tau(a)$.
    In this case Lemma~\ref{lem:augmenting_path} implies that $C \cap \TENS{1}{\theta_1^i}_{f_i}$ does not contain an arc from $\lift(\cut(\dynN))$.
    Since, $W_2$ ends in $\dynN_{\phi(f_i)}(s)$, this implies that also no arc from $\lift(\cut(\dynN))$ is contained on the subpath of $C$ connecting $C_2$ and $C'$.
    Thus, $W$ also starts in $\dynN_{\phi(f_i)}(s)$ and hence lies completely in $\dynN_{\phi(f_i)}(s)$.

    Next suppose that $h_{\phi(f_i)}(W(t))>\sum_{a \in A}\tau(a)$ (and thus  $h_{\phi(f_i)}(W(s)) < \sum_{a \in A}\tau(a)$).
    By Lemma~\ref{lem:augmenting_path} this yields that $C \cap \TENS{\theta_2^i}{\theta}_{f_i}$ does not contain an arc from $\lift(\cut(\dynN))$.
    Consider the repeated component $C_3$ running from layer $\theta_2^i$ to layer $\theta_1^i$ that is traversed by $C$ after $C'$.
    Let $W_3 := \pi(C_3)$.
    Suppose that $W$ ends in $\dynN_{\phi(f_i)}(s)$ or $\overline{\dynN}_{\phi(f_i)}$.
    If $W$ ends in $\dynN_{\phi(f_i)}(s)$, then as $C \cap \TENS{\theta_2^i}{\theta}_{f_i}$ does not contain an arc from $\lift(\cut(\dynN))$ the repeated component $W_3$ lies completely in $\dynN_{\phi(f_i)}(s)$.
    If $W$ ends in $\overline{\dynN}_{\phi(f_i)}$, then $W_3$ ends with $W_3(s)$ such that $h_{\phi(f_i)}(W_3(s))\geq \sum_{a\in A}\tau(a)$. 
    In both cases Lemma~\ref{lem:augmenting_path} implies that $C \cap \TENS{1}{\theta_1^i}_{f_i}$ does not contain an arc from $\lift(\cut(\dynN))$.
    Since $W_3$ ends in $\dynN_{\phi(f_i)}(s)$ this overall yields that all repeated components of $C$ lie completely in $\dynN_{\phi(f_i)}(s)$, a contradiction.
    Thus, $W$ lies completely in $\dynN_{\phi(f_i)}(t)$.

    Next, we consider the case that $C$ contains $\hat{t}$.
    Lemma~\ref{lem:augmenting_path} implies that $C \cap \TENS{\theta_2^i}{\theta}_{f_i}$ does not contain an arc from $\lift(\cut(\dynN))$.
    We have to consider several subcases.

    First suppose that $C$ contains a repeated component $C_3$ running from layer $\theta_2^i$ to layer $\theta_1^i$ and suppose that $C_3$ is the repeated component of this form that is traversed by $C$ directly after $C'$.
    Let $W_3 := \pi(C_3)$. 
    Lemma~\ref{lem:augmenting_path} implies that $W_3$ does not contain a repeated component $W_3(s)$ with $W_3(s) \subseteq \dynN_{\phi(f_i)(t)}$ with $h_{\phi(f_i)}(W_3(s))\geq \sum_{a \in A}\tau(a)$ as $\hat{t} \in C$.
    Thus $W_3$ starts in $\dynN_{\phi(f_i)}(t)$.
    As $C \cap \TENS{\theta_2^i}{\theta}_{f_i}$ does not contain an arc from $\lift(\cut(\dynN))$ this implies that $W$ ends in $\dynN_{\phi(f_i)}(s)$ and thus lies completely in $\dynN_{\phi(f_i)}(t)$ because $W \cap \cut(\dynN) = \emptyset$.

    If $C$ does not contain any repeated component running from layer $\theta_2^i$ to layer $\theta_1^i$ that is traversed \emph{after} $C'$, then as $C \cap \TENS{\theta_2^i}{\theta}_{f_i}$ does not contain an arc from $\lift(\cut(\dynN))$ and $\hat{t} \in C$, we again get that $W$ ends in $\dynN_{\phi(f_i)}(t)$ and hence lies completely $\dynN_{\phi(f_i)}(t)$.
    The case that $\hat{s} \in C$ can be shown by symmetric arguments.
\end{proof}
\paragraph{Components with Zero Transit Time.}
\begin{lemma}\label{lem:no_cycle_crossing_cuts_0}
    Let $C'$ be a repeated component of $C$ such that $\tau(W) = 0$ with $W:= \pi(C')$.
    If $h^\theta_{f_i}(C') \geq  2\left(\sum_{a \in A} \tau(a)\right)$, then $C'$ does not contain any forward or backward arc from $\lift(\cut(\dynN))$ and $W$ lies in $\dynN_{\phi(f_i)}(s)$ or $\dynN_{\phi(f_i)}(t)$.
\end{lemma}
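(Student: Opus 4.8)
The plan is to follow the same template as the proofs of Lemma~\ref{lem:structure_of_connected_component_neg} and Lemma~\ref{lem:no_cycle_crossing_cuts_pos}, with the height hypothesis as the new ingredient. \emph{Preliminaries.} Since $\phi(f_i)$ is a maximum $s$-$t$ flow (Lemma~\ref{lem:maxflow_induces_max_flow}, and it remains a maximum flow because it is obtained from $\phi(f_0)$ by augmenting along cycles, cf.\ Lemma~\ref{lem:lifing_lemma} and Lemma~\ref{lem:invariant_under_augmentations}), every arc of $\cut(\dynN)$ is fully congested by $\phi(f_i)$, so its forward lifts do not occur in $\TEN{\theta}_{f_i}$; in particular $C'$ uses no forward arc of $\lift(\cut(\dynN))$ and $W=\pi(C')$ avoids $\cut(\dynN)$. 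By maximality $V$ partitions into $V_{\phi(f_i)}(s)$, $\overline{V}_{\phi(f_i)}$ and $V_{\phi(f_i)}(t)$, no residual arc leaves $V_{\phi(f_i)}(s)$, no residual arc enters $V_{\phi(f_i)}(t)$, and $V_{\phi(f_i)}(s)\subseteq S\subseteq V\setminus V_{\phi(f_i)}(t)$ for every minimum $s$-$t$ cut $S$. Exactly as in the proof of Lemma~\ref{lem:structure_of_connected_component_neg} this forces a decomposition $W=W(t)\oplus\overline{W}\oplus W(s)$ with $W(t)\subseteq\dynN_{\phi(f_i)}(t)$, $\overline{W}\subseteq\overline{\dynN}_{\phi(f_i)}$ and $W(s)\subseteq\dynN_{\phi(f_i)}(s)$, any piece possibly trivial. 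If $W$ lies entirely in one of $\dynN_{\phi(f_i)}(s)$, $\dynN_{\phi(f_i)}(t)$ the lemma is proved: for $a\in\cut(\dynN)$ the backward arc $\cev{a}$ has its tail outside $V_{\phi(f_i)}(s)$ and its head outside $V_{\phi(f_i)}(t)$, so it cannot be traversed by a walk confined to either region, hence $C'$ carries no backward arc of $\lift(\cut(\dynN))$ either.

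\emph{The reduction.} It remains to show that $W$ cannot straddle, i.e.\ that $\overline{W}$ is trivial and not both of $W(t)$, $W(s)$ are non-trivial. Assume it does straddle, and recall $h^\theta_{f_i}(C')=h_{\phi(f_i)}(W)$ since $C'=\lift(W,\low(C'))$ by Observation~\ref{obs:cycles}. Because $\tau(W)=0$ the endpoints of $W$ lie at the same height, while $W$ spans a height interval of length at least $2\sum_{a\in A}\tau(a)$; reversing the network if necessary (Remark~\ref{rem}) we may assume $W$ reaches a vertex $w^\star$ at height at least $\sum_{a\in A}\tau(a)$ above its start. Two bounds then control every straddling piece. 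First, because $\overline{\dynN}_{\phi(f_i)}$ contains no cycle of negative transit time (Lemma~\ref{Cor:no_neg_transit_time_cycle}), a descending prefix of $\overline{W}$ decomposes into a path (transit time $\ge-\sum_{a\in A}\tau(a)$) and non-negative-transit cycles, so $\overline{W}$ never drops more than $\sum_{a\in A}\tau(a)$ below its start; likewise $W(t)$ and $W(s)$ each contain a path contributing at most $\sum_{a\in A}\tau(a)$ of height. Second, applying Lemma~\ref{lem:augmenting_path} to a high vertex lying in $V_{\phi(f_i)}(s)$ (resp.\ $V_{\phi(f_i)}(t)$) forbids any vertex of $W$ in the opposite region from being more than $\sum_{a\in A}\tau(a)$ higher, and it pins down which of $\TENS{1}{\theta_1^i}_{f_i}$, $\TENS{\theta_2^i}{\theta}_{f_i}$ the cycle $C$ may meet and whether $\hat{s}$ or $\hat{t}$ lies on $C$; via Lemma~\ref{lem:path_towards_cut} this restriction propagates along the subpaths of $C$ joining consecutive repeated components. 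Chasing $w^\star$ (and, when needed, the lowest vertex of $W$) through these restrictions forces all straddling pieces of $W$ except one to be trivial, so in fact $h_{\phi(f_i)}(W)<2\sum_{a\in A}\tau(a)$, contradicting the hypothesis. Hence $W$ lies in $\dynN_{\phi(f_i)}(s)$ or in $\dynN_{\phi(f_i)}(t)$.

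\emph{The main obstacle.} The substantial work is the case analysis behind ``the reduction'': which of $V_{\phi(f_i)}(s)$, $\overline{V}_{\phi(f_i)}$, $V_{\phi(f_i)}(t)$ contains $w^\star$; whether $C$ is of the type through $\hat{s}$, the type through $\hat{t}$, or the three-band type; and how the ``no cut arc below/above'' conclusions of Lemma~\ref{lem:augmenting_path} and Lemma~\ref{lem:path_towards_cut} chain through the subpaths linking the repeated components. This is the same branching as in the proof of Lemma~\ref{lem:no_cycle_crossing_cuts_pos}, and I would organize the write-up along exactly those lines, invoking Remark~\ref{rem} to halve it. The delicate point throughout is to turn ``$h_{\phi(f_i)}(W)$ is large'' into the \emph{absence} of a straddling piece rather than merely its shortness; this is precisely where the two copies of $\sum_{a\in A}\tau(a)$ in the threshold $2\sum_{a\in A}\tau(a)$ are consumed, one for the descent control coming from Lemma~\ref{Cor:no_neg_transit_time_cycle} and one for the height gap in Lemma~\ref{lem:augmenting_path}.
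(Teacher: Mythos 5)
Your preliminaries (saturation of forward cut arcs, the decomposition $W=W(t)\oplus\overline{W}\oplus W(s)$, and the observation that a $W$ confined to one region cannot traverse a backward arc of $\lift(\cut(\dynN))$) are correct and consistent with what the paper uses. The gap is in the ``reduction''. Your claimed chain --- straddling forces ``all straddling pieces of $W$ except one to be trivial, so in fact $h_{\phi(f_i)}(W)<2\sum_{a\in A}\tau(a)$'' --- does not hold: a walk with only one nontrivial piece, say $W(s)\subseteq\dynN_{\phi(f_i)}(s)$, can still have arbitrarily large height by repeatedly traversing $s$-reachable cycles of nonzero transit time (Lemma~\ref{lem:cost_of_component_0} is precisely about such tall one-region components), so ``mostly trivial pieces'' yields no height bound and hence no contradiction with the hypothesis. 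More generally, the two bounds you cite (descent control in $\overline{\dynN}_{\phi(f_i)}$, and the ``no vertex in the opposite region more than $\sum_{a\in A}\tau(a)$ higher'' clause of Lemma~\ref{lem:augmenting_path}) only constrain the path parts and the relative heights across regions; they do not cap $h_{\phi(f_i)}(W(s))$ or $h_{\phi(f_i)}(W(t))$ individually, so no amount of ``chasing $w^\star$'' along these lines produces $h_{\phi(f_i)}(W)<2\sum_{a\in A}\tau(a)$.

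The paper's mechanism is different and you never perform its key step. The height hypothesis, together with the absence of negative-transit-time cycles in $\overline{\dynN}_{\phi(f_i)}$ (Lemma~\ref{Cor:no_neg_transit_time_cycle}), produces a vertex $v$ of $W$ lying in $\dynN_{\phi(f_i)}(s)$ or $\dynN_{\phi(f_i)}(t)$ at height at least $\sum_{a\in A}\tau(a)$. Now suppose $C'$ contains a backward arc of $\lift(\cut(\dynN))$, i.e.\ $W$ crosses a minimum cut backwards. Since $C$ is a cycle and every forward lift of a cut arc inside the repeated part is saturated by $\phi(f_i)$, the cycle must re-cross the cut in forward direction through a \emph{non-congested} lift $a^{\theta'}$ with $\theta'\in[1,\theta_1^i)$ or $\theta'\in[\theta_2^i,\theta)$, lying on the non-repeated part $C''=C\setminus(C\cap\TENS{\theta_1^i}{\theta_2^i}_{f_i})$. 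By Lemma~\ref{lem:path_towards_cut} such an arc is reachable from $\hat{s}$ (or reaches $\hat{t}$), and combined with the high vertex $v$ this assembles an augmenting $\hat{s}$-$\hat{t}$ path, contradicting the maximality of $f_i$ --- this is exactly the content of Lemma~\ref{lem:augmenting_path} that gets contradicted. So the contradiction is with flow maximality, not with the height bound; the ``the cycle must close up across the cut in the non-repeated part'' step is the heart of the proof and is absent from your argument.
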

\begin{proof}
    By Remark~\ref{rem}, $C'$ is in $\TENS{1}{\theta_2^i}_{f_i}$.
    Since $h^\theta_{f_i}(C') > 2\left(\sum_{a \in A} \tau(a)\right)$ and because there are no cycles with negative transit time in $\overline{\dynN}_{\phi(f_i)}$ by Lemma~\ref{Cor:no_neg_transit_time_cycle} there is a $v \in W$ with $v \in \dynN_{\phi(f_i)}(s)$ or $v \in \dynN_{\phi(f_i)}(t)$ with $h_{\phi(f_i)}(v,W) \geq \sum_{a \in A}\tau(a)$.
    First assume that $v \in \dynN_{\phi(f_i)}(s)$.
    Aiming for a contradiction, suppose that $C'$ contains a backward arc from $\lift(\cut(\dynN))$.
    We thus get that $W$ ends in $\dynN_{\phi(f_i)}(s)$ and starts either in $\overline{\dynN}_{\phi(f_i)}$ or $\dynN_{\phi(f_i)}(t)$.
    Independent of the specific form of the cycle $C$ this implies that $C'' = C\setminus (C \cap \TENS{\theta_1^i}{\theta_2^i}_{f_i})$ contains an arc $a^{\theta'} \in \lift(\cut(\dynN))$ with $\theta' \in [1,\theta_1^i)$ contradicting Lemma~\ref{lem:augmenting_path}.
    Thus, $C'$ does not contain a backward arc from $\lift(\cut(\dynN))$ and hence $W$ lies completely in $\dynN_{\phi(f_i)}(s)$.

    If $v \in \dynN_{\phi(f_i)}(t)$ then, since $W$ does not contain an arc in $\cut(\dynN)$, the whole path $W$ lies in $\dynN_{\phi(f_i)}(t)$ and thus $C'$ does not contain any backward arc from $\dynN_{\phi(f_j)}(t)$.
    %
\end{proof}
\begin{lemma}\label{lem:cost_of_component_0}
    Let $C'$ be a repeated component of $C$ such that $\tau(W) = 0$ with $W:= \pi(C')$.
    If $h^\theta_{f_i}(C') > 2\left(\sum_{a \in A} \tau(a)\right)\cdot \left( \sum_{a \in A}|c(a)|u(a)+1\right) $, then $c(C') > 0$.
\end{lemma}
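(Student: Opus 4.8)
The plan is to project the repeated component $C'$ down to $\dynN_{\phi(f_i)}$, locate the place where the projected walk reaches its maximal height, and then argue that the \emph{descending} part of that walk is forced to traverse many cycles of negative transit time, each of which is necessarily expensive; this outweighs the bounded negative contribution of everything else. Throughout write $T:=\sum_{a\in A}\tau(a)$ and $M:=\sum_{a\in A}|c(a)|u(a)$, and set $\eta:=h^\theta_{f_i}(C')$, so the hypothesis reads $\eta>2T(M+1)$ and the goal is $c(C')>0$.

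Since $\eta>2T(M+1)\ge 2T$, Lemma~\ref{lem:no_cycle_crossing_cuts_0} applies and tells us that $C'$ uses no arc of $\lift(\cut(\dynN))$ and that $W:=\pi(C')$ lies entirely in $\dynN_{\phi(f_i)}(s)$ or in $\dynN_{\phi(f_i)}(t)$. Replacing the network by its reverse if necessary (Remark~\ref{rem}), assume $W\subseteq\dynN_{\phi(f_i)}(s)$; moreover, since $\tau(W)=0$ both endpoints of $C'$ lie in the same extreme layer of the repeated part, and after a further reversal we may assume this layer is $\theta_1^i$. Because $\pi$ preserves transit times and costs arc by arc, $W$ is a walk in $\dynN_{\phi(f_i)}(s)$ with $\tau(W)=0$ and $c(W)=c(C')$, and because $C'\subseteq\TENS{\theta_1^i}{\theta_2^i}_{f_i}$ starts (and ends) at layer $\theta_1^i$ we get $h^-_{\phi(f_i)}(W)=0$ and $h^+_{\phi(f_i)}(W)=\eta$.

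Now pick a node $z$ of $W$ with $h_{\phi(f_i)}(z,W)=\eta$ and split $W=W_1\oplus W_2$ at $z$, so $\tau(W_1)=\eta$, $\tau(W_2)=-\eta$, and $W_1,W_2\subseteq\dynN_{\phi(f_i)}(s)$. Decompose $W_2$ by Lemma~\ref{lem:decomposition_of_walk} into a path $P_2$ and cycles $B_1,\dots,B_r$; concatenating this with a decomposition of $W_1$ realises $B_1,\dots,B_r$ as cycles in a decomposition of $W$, so Lemma~\ref{lem:cycles_are_reachable} shows that each $B_k\subseteq\dynN_{\phi(f_i)}(s)$ is $s$-reachable. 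A simple path or cycle uses each of the $T$ backward arcs of transit time $-1$ at most once, hence $\tau(P_2)\ge -T$ and $|\tau(B_k)|\le T$ for all $k$; from $-\eta=\tau(W_2)=\tau(P_2)+\sum_k\tau(B_k)$ and $\sum_{k:\tau(B_k)>0}\tau(B_k)\ge 0$ we obtain $\sum_{k:\tau(B_k)<0}|\tau(B_k)|\ge\eta-T$, so the number $N$ of cycles $B_k$ with $\tau(B_k)<0$ satisfies $N\ge(\eta-T)/T$. Each such $B_k$ is $s$-reachable with $\tau(B_k)<0$, hence $c(B_k)\neq 0$ by Lemma~\ref{lem:properties}(b), while $c(B_k)\ge 0$ since after Step~1.2 the network $\dynN_{\phi(f_i)}$ has no $s$- or $t$-reachable cycle of negative cost; thus $c(B_k)\ge 1$. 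Therefore $c(W_2)=c(P_2)+\sum_k c(B_k)\ge -M+N$, and combined with $c(W_1)\ge -M$ from Lemma~\ref{lem:walk_in_Ns_or_Nt_cost},
\[
 c(C')=c(W_1)+c(W_2)\ \ge\ -2M+N\ \ge\ -2M+\frac{\eta-T}{T}\ >\ -2M+(2M+1)\ =\ 1\ >\ 0,
\]
where the strict inequality uses $\eta>2T(M+1)$, i.e.\ $\eta-T>T(2M+1)$.

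The height bookkeeping and the bounds $\tau(P_2)\ge -T$, $|\tau(B_k)|\le T$ are routine. The main obstacle is conceptual rather than computational: largeness of $\eta$ does not on its own force expensive cycles, so one must first use Lemma~\ref{lem:no_cycle_crossing_cuts_0} to push $W$ off every minimum cut and into $\dynN_{\phi(f_i)}(s)$, where reachable cycles are non-negative and reachable cycles of negative transit time are strictly positive, and only then turn "height $\eta$" into "at least $(\eta-T)/T$ cycles of negative transit time" via the transit-time balance on the descending half; the only genuine technical point is the legitimacy of invoking Lemma~\ref{lem:cycles_are_reachable} for cycles arising in the decomposition of the subwalk $W_2$, which is handled by the observation that such a decomposition extends to one of $W$.
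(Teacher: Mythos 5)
Your proof is correct and follows essentially the same route as the paper: invoke Lemma~\ref{lem:no_cycle_crossing_cuts_0} to place $W$ inside $\dynN_{\phi(f_i)}(s)$ or $\dynN_{\phi(f_i)}(t)$, use Lemma~\ref{lem:cycles_are_reachable} and Lemma~\ref{lem:properties}(b) to get cost $\geq 0$ for reachable cycles and cost $\geq 1$ for those with negative transit time, and convert the large height into at least $(\eta-T)/T>2M+1$ traversals of such cycles. Your splitting of $W$ at its apex and the resulting $-2M$ (rather than $-M$) slack is only a cosmetic variation of the paper's accounting, and the hypothesis absorbs it.
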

\begin{proof}
    By Remark~\ref{rem}, $C$ is in $\TENS{1}{\theta_2^i}_{f_i}$.
    By our assumption on the height of $W$, we can apply Lemma~\ref{lem:no_cycle_crossing_cuts_0} and deduce that $W$ either lies completely in $\dynN_{\phi(f_i)}(s)$ or  $\dynN_{\phi(f_i)}(t)$.
    The walk $W$ can be decomposed into a path $P$ with $h_{\phi(f_i)}(P) \leq \sum_{a \in A} \tau(a)$ and $c(P) \geq -\sum_{a\in A}|c(a)|u(a)$, and cycles $B$ that are $s$-reachable or $t$-reachable by Lemma~\ref{lem:cycles_are_reachable} and hence fulfill $c(B)\geq 0$ by construction.
    Since $h_{\phi(f_i)}(W) > 2\left(\sum_{a \in A} \tau(a)\right)\cdot \left( \sum_{a \in A}|c(a)|u(a)+1\right)$ and $\tau(W) = 0$ by assumption, there is at least one cycle $B$ in the decomposition of $W$ with $\tau(B) \leq -1$ and thus $c(B) \geq 1$ by Lemma~\ref{lem:properties} part (b) that is traversed at least once. 
    As $h_{\phi(f_i)}(P) \leq \sum_{a \in A} \tau(a)$, in the walk $W$ at least a height of $h_{\phi(f_i)}(W)-\sum_{a\in A}\tau(a)$ has to be traversed by traveling along $B$ multiple times.
    Traveling along $B$ once changes the height by at most $\sum_{a \in A} \tau(a)$ while contributing a cost of a least $1$.
    This yields $C(W) > 0$.
    %
    %

    %
\end{proof}
We conclude this subsection by deriving a lower bound on the cost of $C$.
\begin{lemma}\label{lem:aux_height}
    The repeated components of $C \cap \TENS{\theta_1^{i}}{\theta_2^{i}}$ can be decomposed into at most $|V|+1$ repeated components  $C^1_+,\ldots,C^{k_+}_+$ with strictly positive transit time, at most $|V|+1$ repeated components $C^1_-,\ldots,C^{k_-}_-$ with strictly negative transit time, and at most $2|V|$ repeated components $C_0^1,\ldots,C_0^{k_0}$ with zero transit time with
    \begin{align}\label{eq:bound_on_cost_overall}
        c(C) = \sum_{j = 1}^{k_+} c(C_+^j)+ \sum_{j = 1}^{k_-} c(C_-^j) + \sum_{j = 1}^{k_0} c(C_0^j) \geq  \left( (|V|+1)\left(- 2\sum_{a \in A}\tau(a) -3\right)-2h  \right) \cdot \sum_{a \in A}|c(a)|u(a)
    \end{align}
    with $h = 2\left(\sum_{a\in A}\tau(a)\right) \cdot \left( \sum_{a \in A}|c(a)|u(a) + 1\right)$ defined as in~\ref{eq:def_h}.
\end{lemma}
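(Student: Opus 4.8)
The plan is to cut the part of $C$ lying in the repeated band, $C \cap \TENS{\theta_1^i}{\theta_2^i}$, into its repeated components, group these components by the sign of the transit time of their projection to $\dynN_{\phi(f_i)}$, bound the aggregate cost of each of the three groups by invoking the structural lemmas already established, and add the bounds. First I would control the number of components via Lemma~\ref{lem:num_of_connected_components}: whether $C$ lies in $\TENS{\theta_1^i}{\theta}_{f_i}$, in $\TENS{1}{\theta_2^i}_{f_i}$, or meets all three regions $\TENS{1}{\theta_1^i}_{f_i}$, $\TENS{\theta_1^i}{\theta_2^i}_{f_i}$, $\TENS{\theta_2^i}{\theta}_{f_i}$, it has only few repeated components; and since any family of repeated components that all start, or all end, in one fixed layer is pairwise vertex-disjoint (hence of size at most $|V|$), a short case check on the possibilities in Lemma~\ref{lem:num_of_connected_components} shows at most $|V|+1$ of the repeated components have strictly positive transit time, at most $|V|+1$ have strictly negative transit time, and at most $2|V|$ have zero transit time. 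Writing these as $C_+^1,\dots,C_+^{k_+}$, $C_-^1,\dots,C_-^{k_-}$, $C_0^1,\dots,C_0^{k_0}$, the repeated components partition the arcs of $C$ inside the band, and since $\pi$ preserves arc costs, summing their costs recovers the cost $C$ accumulates inside the band, which is the left-hand side of~\eqref{eq:bound_on_cost_overall}.

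For the positive-transit group, Lemma~\ref{lem:no_cycle_crossing_cuts_pos} gives $c(C_+^j) > -\sum_{a\in A}|c(a)|u(a)$ for each $j$, hence $\sum_{j=1}^{k_+} c(C_+^j) > -(|V|+1)\sum_{a\in A}|c(a)|u(a)$. For the negative-transit group, Lemma~\ref{lem:cost_of_component_neg} gives, in either of its two cases, $c(C_-^j) \geq \bigl(-2\sum_{a\in A}\tau(a)-2\bigr)\sum_{a\in A}|c(a)|u(a)$, so $\sum_{j=1}^{k_-} c(C_-^j) \geq (|V|+1)\bigl(-2\sum_{a\in A}\tau(a)-2\bigr)\sum_{a\in A}|c(a)|u(a)$.

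For the zero-transit group I would argue per component according to its height $h^\theta_{f_i}(C_0^j)$. If $h^\theta_{f_i}(C_0^j) > h$ with $h$ as in~\eqref{eq:def_h}, Lemma~\ref{lem:cost_of_component_0} yields $c(C_0^j) > 0$, so these components only help. If $2\sum_{a\in A}\tau(a) \leq h^\theta_{f_i}(C_0^j) \leq h$, Lemma~\ref{lem:no_cycle_crossing_cuts_0} forces $W := \pi(C_0^j)$ to lie entirely in $\dynN_{\phi(f_i)}(s)$ or in $\dynN_{\phi(f_i)}(t)$, whence $c(C_0^j) = c(W) \geq -\sum_{a\in A}|c(a)|u(a)$ by Lemma~\ref{lem:walk_in_Ns_or_Nt_cost}. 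For the remaining very short components (height below $2\sum_{a\in A}\tau(a)$) one bounds the cost directly, using that such a component is a path in the unit-capacity time-expanded network confined to a band of fewer than $2\sum_{a\in A}\tau(a)$ layers, together with the invariants of Lemma~\ref{lem:properties} and Lemma~\ref{Cor:no_neg_transit_time_cycle} that keep the projected cycles nonnegative in cost except for positive-transit cycles inside $\overline{\dynN}_{\phi(f_i)}$. Collecting the contributions over the at most $2|V|$ zero-transit components and absorbing constants into the definition of $h$ gives $\sum_{j=1}^{k_0} c(C_0^j) \geq -2h\sum_{a\in A}|c(a)|u(a)$. Adding the three estimates, $c(C) \geq \bigl(-(|V|+1)+(|V|+1)(-2\sum_{a\in A}\tau(a)-2)-2h\bigr)\sum_{a\in A}|c(a)|u(a) = \bigl((|V|+1)(-2\sum_{a\in A}\tau(a)-3)-2h\bigr)\sum_{a\in A}|c(a)|u(a)$, which is exactly~\eqref{eq:bound_on_cost_overall}.

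The main obstacle is the zero-transit group: one must arrange the application of Lemma~\ref{lem:no_cycle_crossing_cuts_0} and Lemma~\ref{lem:cost_of_component_0} so that only genuinely short components can have negative cost, and then control the aggregate cost of those short components tightly enough that the slack term comes out as $2h$ rather than a factor of $|V|$ times $h$ — this is precisely where the invariants preserved through Steps 1.1 and 1.2 (no $s$- or $t$-reachable negative cost cycle and no zero-transit negative cost cycle in $\dynN_{\phi(f_i)}$, and no negative-transit cycle in $\overline{\dynN}_{\phi(f_i)}$) are needed. The component-counting and the arithmetic assembly are routine by comparison.
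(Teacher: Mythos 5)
Your proof follows the paper's argument essentially verbatim: decompose the repeated components into the three transit-time classes and count them via Lemma~\ref{lem:num_of_connected_components}, bound the positive-transit group below by $-(|V|+1)\sum_{a \in A}|c(a)|u(a)$ via Lemma~\ref{lem:no_cycle_crossing_cuts_pos}, the negative-transit group by $(|V|+1)\bigl(-2\sum_{a \in A}\tau(a)-2\bigr)\sum_{a \in A}|c(a)|u(a)$ via Lemma~\ref{lem:cost_of_component_neg}, the zero-transit group by $-2h\sum_{a \in A}|c(a)|u(a)$ via Lemma~\ref{lem:cost_of_component_0}, and sum. The one step you single out as delicate --- why the zero-transit components aggregate to $-2h\sum_{a \in A}|c(a)|u(a)$ rather than a bound carrying an extra factor of $|V|$ --- is exactly the step the paper itself asserts in one sentence without further detail, so your write-up is no less complete than the published proof.
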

\begin{proof}
    That the repeated components can be decomposed in such a way follows from Lemma~\ref{lem:num_of_connected_components}.
    Lemma~\ref{lem:cost_of_component_0} and Lemma~\ref{lem:num_of_connected_components} imply that in the worst case the repeated components with zero transit time of height at most $h$ have a cost of at most $-2h\sum_{a \in A}|c(a)|u(a)$.
    This worst case can only take place if $C$ intersects $\TENS{1}{\theta_1^i}_{f_i}$, $\TENS{\theta_1^i}{\theta_2^i}_{f_i}$ and $\TENS{\theta_2^i}{\theta}_{f_i}$.
    Also by Lemma~\ref{lem:cost_of_component_0}, the repeated components with zero transit time of height larger than $h$ contribute a strictly positive cost.  
    Thus, we have
    \begin{align}\label{eq:cost_zero}
        \sum_{i = 1}^{k_0} c(C_0^i) \geq -2h\sum_{a \in A}|c(a)|u(a).
    \end{align}
    By Lemma~\ref{lem:no_cycle_crossing_cuts_pos} and Lemma~\ref{lem:num_of_connected_components} we also have
    \begin{align}\label{eq:cost_pos}
       \sum_{i = 1}^{k_+} c(C_+^i) \geq -(|V|+1)\sum_{a \in A}|c(a)|u(a),
    \end{align}
    while Lemma~\ref{lem:cost_of_component_neg} and Lemma~\ref{lem:num_of_connected_components} yield,
    \begin{align}\label{eq:cost_neg}
        \sum_{i = 1}^{k_+} c(C_+^i) \geq (|V|+1)\left( - 2\sum_{a \in A}\tau(a) -2  \right) \cdot \sum_{a \in A}|c(a)|u(a).
    \end{align}
    Putting together~\eqref{eq:cost_zero},~\eqref{eq:cost_pos} and~\eqref{eq:cost_neg} yields the statement of the lemma.
\end{proof}
\subsection{Bounding the Height of Each Component}
In this subsection we derive lower bounds on the height of each repeated component of $C$ depending on its transit time.
If $C'$ is a repeated component with positive transit time that lies in $\TENS{1}{\theta_2^i}_{f_i}$ or in $\TENS{\theta_1^i}{\theta}_{f_i}$, then it follows directly from the properties of the compression procedure described in Appendix~\ref{SecA:Compression} in Lemma~\ref{lem:compression} that $C'$ has bounded height. 
\begin{lemma}\label{lem:height_of_component_pos}
    Let $C'$ be a repeated component of $C$ in $\TENS{1}{\theta_2^i}_{f_i}$ or in $\TENS{\theta_1^i}{\theta}_{f_i}$ such that $\tau(W) > 0$ with $W:= \pi(C')$ .
    Then $h^\theta_{f_i}(C') < J(h_i) $ with $J(h_i)$ defined as in~\eqref{eq:def_bound}.
\end{lemma}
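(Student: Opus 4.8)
The plan is to deduce the claim almost directly from Lemma~\ref{lem:no_cycle_crossing_cuts_pos} together with the height bound guaranteed by the compression procedure (Lemma~\ref{lem:compression}). First, by Remark~\ref{rem} I may assume that $C'$ is a repeated component of $C$ in $\TENS{1}{\theta_2^i}_{f_i}$; then, as in the proof of Lemma~\ref{lem:no_cycle_crossing_cuts_pos}, $C'$ is a path that starts in layer $\theta_1^i$, and since $C' = \lift(\pi(C'),\low(C'))$ by Observation~\ref{obs:cycles}, its height $h^\theta_{f_i}(C')$ equals $h_{\phi(f_i)}(W)$ with $W := \pi(C')$. So it suffices to bound $h_{\phi(f_i)}(W)$.

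Second, I would apply Lemma~\ref{lem:no_cycle_crossing_cuts_pos} to $C'$ (which has $\tau(W)>0$): it yields that $W$ lies completely in $\dynN_{\phi(f_i)}(s)$ or in $\dynN_{\phi(f_i)}(t)$, and that $c(C') > -\sum_{a \in A}|c(a)|u(a)$. Decomposing $W$ by Lemma~\ref{lem:decomposition_of_walk} into a simple path $P$ and cycles $B_1,\dots,B_\ell$, every $B_j$ is $s$- or $t$-reachable by Lemma~\ref{lem:cycles_are_reachable}, hence $c(B_j)\ge 0$ by the stopping condition of Step~1.2.

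Third, I would invoke that $C$ is a compressed cycle: by Lemma~\ref{lem:compression} the component $C'$ traverses, inside the repeated part $\TENS{\theta_1^i}{\theta_2^i}_{f_i}$, no cycle of positive cost, and the height of a compressed positive-transit-time component admits an explicit bound. Combining $c(B_j)\ge 0$ with the absence of traversed positive-cost cycles forces $c(B_j)=0$ for every cycle actually traversed by $W$; the simple path $P$ contributes height at most $\sum_{a\in A}\tau(a)$ on each side, and a counting argument in the spirit of Lemma~\ref{lem:num_of_connected_components} (the cost-zero cycles are inserted along a simple path on at most $|V|+1$ blocks, each of bounded individual height) together with the multiplicity cap from the compression gives an explicit bound on $h^\theta_{f_i}(C')$. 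Finally I would substitute this quantity into~\eqref{eq:def_bound}: since $J(h_i)$ already contains the slack terms $(|V|+1)(2\sum_{a\in A}\tau(a)+3)+1+2h+2h_i$ with $h$ as in~\eqref{eq:def_h} and $J(h_i)\ge 3\sum_{a\in A}\tau(a)$, the inequality $h^\theta_{f_i}(C') < J(h_i)$ reduces to a routine comparison of the constants.

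The step I expect to be the main obstacle is the third one: making rigorous that the compression procedure not only discards positive-cost cycles but also effectively caps the number of traversals of a cost-zero, strictly positive transit time, $s$- or $t$-reachable cycle of $\dynN_{\phi(f_i)}$. Absent such a cap, $W$ could spiral upward arbitrarily far at zero marginal cost, and the cost bound $c(C') > -\sum_{a\in A}|c(a)|u(a)$ alone would be useless; so the height bound genuinely relies on the full statement of Lemma~\ref{lem:compression} — the interplay between ``cost does not increase'' and ``height is bounded'' — rather than on cost considerations alone. Once that statement is in hand, everything else is bookkeeping with the definitions~\eqref{eq:def_h} and~\eqref{eq:def_bound}.
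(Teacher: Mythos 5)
Your proposal is correct and takes essentially the same route as the paper: the paper offers no argument for this lemma beyond the sentence preceding it, which appeals directly to Lemma~\ref{lem:compression}, and that lemma's explicit bounds $h_{\phi(f_i)}(W')\le 2\sum_{a\in A}\tau(a)$ resp.\ $h_{\phi(f_i)}(W')\le |V|\sum_{a\in A}\tau(a)$ sit trivially below $J(h_i)$. The obstacle you flag --- capping the number of traversals of zero-cost, positive-transit-time cycles --- is precisely what the compression procedure supplies (it re-inserts at most one positive-transit-time cycle per valley of the simple path $P$, of which there are at most $|V|$), so your detour through Lemma~\ref{lem:no_cycle_crossing_cuts_pos} and the cost decomposition is not needed for this particular statement.
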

We are now prepared to give a bound on the height of repeated components of $C$ with negative (or zero) transit time.
\begin{lemma}\label{lem:height_of_component_neg}
    Let $C'$ be a repeated component of $C$ such that $\tau(W) \leq 0$ with $W:= \pi(C')$.
    Then $h^\theta_{f_i}(C') \leq  J(h_i)$ with $J(h_i)$ defined as in~\eqref{eq:def_bound}.
\end{lemma}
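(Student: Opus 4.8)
The plan is to argue by contradiction: assume $C'$ is a repeated component of $C$ with $W:=\pi(C')$, $\tau(W)\le 0$, and $h^\theta_{f_i}(C') > J(h_i)$, and show that $c(C')$ is then forced to be so large that $c(C)\ge 0$, contradicting that $C$ is a negative cost cycle. First I would set up an upper bound on $c(C')$. Write $c(C) = c(C') + \sum_{C''\neq C'} c(C'') + c\big(C\setminus\TENS{\theta_1^i}{\theta_2^i}_{f_i}\big)$, the split into the repeated components of $C$ and its non-repeated part. By Lemma~\ref{lem:no_cycle_crossing_cuts_pos}, Lemma~\ref{lem:cost_of_component_neg}, the zero-transit estimate~\eqref{eq:cost_zero} from the proof of Lemma~\ref{lem:aux_height}, and the component count in Lemma~\ref{lem:num_of_connected_components}, the contribution $\sum_{C''\neq C'} c(C'')$ of the other repeated components is bounded below by a quantity $L$ matching the terms $(|V|+1)(2\sum_a\tau(a)+3)$ and $2h$ inside $J$. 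The non-repeated part lives in at most $2h_i$ consecutive layers, and since a simple cycle in the $u{=}1$ time-expanded network uses each arc copy at most once, its cost is at least $-2h_i\sum_a|c(a)|u(a)$, which matches the $2x=2h_i$ term in $J$. As $c(C)<0$ this gives an explicit upper bound $c(C') < 2h_i\sum_a|c(a)|u(a) - L$; the function $J$ in~\eqref{eq:def_bound} is calibrated exactly so that this bound is incompatible with the lower bound derived next once $h^\theta_{f_i}(C') > J(h_i)$.

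The heart of the proof is the matching lower bound on $c(C')$. Decompose $W$ into a path $P$ and cycles $B_1,\dots,B_m$ (with multiplicities) via Lemma~\ref{lem:decomposition_of_walk}. Every $B_j$ satisfies $c(B_j)\ge 0$: if $B_j\subseteq\overline{\dynN}_{\phi(f_i)}$ no negative cost cycle lies there (this holds after Step 1.1 and is preserved, cf.\ Lemma~\ref{lem1}); if $B_j\subseteq\dynN_{\phi(f_i)}(s)$ or $\dynN_{\phi(f_i)}(t)$ it is $s$- or $t$-reachable by Lemma~\ref{lem:cycles_are_reachable}, hence non-negative in cost because Step 1.2 has terminated. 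Moreover, by Lemma~\ref{Cor:no_neg_transit_time_cycle} no cycle with $\tau(B_j)<0$ can lie in $\overline{\dynN}_{\phi(f_i)}$, so each such cycle is $s$- or $t$-reachable and Lemma~\ref{lem:properties}(b) forbids $c(B_j)=0$; thus $c(B_j)\ge 1$. Now $\tau(W)\le 0$ is exploited: a walk of height $h^\theta_{f_i}(C')$ with non-positive net transit has at least $h^\theta_{f_i}(C')$ arcs of transit $-1$, its total ascent is at most its total descent, and — since $P$ carries at most $\sum_a\tau(a)$ down-arcs and $|\tau(B_j)|\le\sum_a\tau(a)$ for every simple cycle — the negative-transit cycles in the decomposition must be traversed at least $\big(h^\theta_{f_i}(C') - O(\sum_a\tau(a))\big)\big/\sum_a\tau(a)$ times in total. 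Each such traversal contributes at least $1$ to $c(W)$, while $c(P)\ge -\sum_a|c(a)|u(a)$, so $c(C') = c(W) \ge \big(h^\theta_{f_i}(C') - O(\sum_a\tau(a))\big)\big/\sum_a\tau(a) - \sum_a|c(a)|u(a)$. With $h^\theta_{f_i}(C')>J(h_i)$ this contradicts the upper bound from the first paragraph.

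The case $\tau(W)=0$ is handled identically: for $h^\theta_{f_i}(C') > 2(\sum_a\tau(a))(\sum_a|c(a)|u(a)+1)$ the descent count applies verbatim (Lemma~\ref{lem:cost_of_component_0} is exactly the special case that this much height already forces positive cost), and for smaller heights $h^\theta_{f_i}(C') < J(h_i)$ is immediate. The step I expect to be the main obstacle is the middle one: the $(-1)$-arcs of $W$ need not all sit inside negative-transit cycles, since a zero-transit cycle carries equally many $(+1)$- and $(-1)$-arcs at no cost, so one must use $\tau(W)\le 0$ carefully to isolate the ``excess'' descent responsible for the height and argue that it is precisely the part routed through the cost-$\ge 1$ negative-transit cycles, and then track all the resulting constants — together with the other-component bounds, which is where Lemma~\ref{lem:structure_of_connected_component_neg} enters (via Lemma~\ref{lem:cost_of_component_neg} and Lemma~\ref{lem:aux_height}) — so that they stay below the threshold $J(h_i)$.
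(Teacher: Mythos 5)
Your overall strategy coincides with the paper's: assume $h^\theta_{f_i}(C')>J(h_i)$, bound the cost of the rest of $C$ from below (the other repeated components via Lemma~\ref{lem:aux_height} and the non-repeated part by $-2h_i\sum_{a\in A}|c(a)|u(a)$), and then force $c(C')$ to be so large, by counting traversals of negative-transit-time cycles of cost at least $1$, that $c(C)\geq 0$. For the case that $W$ lies entirely in $\dynN_{\phi(f_i)}(s)$ or $\dynN_{\phi(f_i)}(t)$ (which covers $\tau(W)=0$ via Lemma~\ref{lem:no_cycle_crossing_cuts_0} and form (1) of Lemma~\ref{lem:structure_of_connected_component_neg}) your argument is essentially the paper's proof.

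The gap is your justification that \emph{every} cycle $B_j$ in the decomposition of $W$ has $c(B_j)\geq 0$, and specifically the claim that no negative cost cycle lies in $\overline{\dynN}_{\phi(f_i)}$ after Step 1.1. What the construction actually eliminates there are negative cost cycles of \emph{zero} transit time (Step 1.1) and, by Lemma~\ref{Cor:no_neg_transit_time_cycle}, cycles of \emph{negative} transit time; cycles in $\overline{\dynN}_{\phi(f_i)}$ with strictly positive transit time and strictly negative cost are never removed (Step 1.2 only touches $s$- or $t$-reachable cycles), and the paper explicitly allows for them --- this is precisely why part (b) of Lemma~\ref{lem:cost_of_component_neg} only bounds $c(\overline{W})$ below by $\bigl(-2\sum_{a\in A}\tau(a)\bigr)\sum_{a\in A}|c(a)|u(a)$ rather than by $0$. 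With such cycles present, your lower bound on $c(W)$ breaks whenever $W$ is of form (2) or (3) of Lemma~\ref{lem:structure_of_connected_component_neg}, since a priori the walk could pick up unbounded negative cost inside $\overline{\dynN}_{\phi(f_i)}$. The repair, which must be applied to $C'$ itself and not only to the other components as you suggest, is the paper's: use Lemma~\ref{lem:structure_of_connected_component_neg} to show that the portion of $W$ inside $\overline{\dynN}_{\phi(f_i)}$ (together with $W(t)$, respectively $W(s)$) has height less than $2\sum_{a\in A}\tau(a)$, so that --- there being no negative-transit cycles in $\overline{\dynN}_{\phi(f_i)}$ --- it can traverse the bad positive-transit cycles only boundedly often and $c(\overline{W})$ is controlled; then run your descent-counting argument only on the remaining piece $W(s)$ (respectively $W(t)$), which does lie in $\dynN_{\phi(f_i)}(s)$, where Lemma~\ref{lem:cycles_are_reachable} gives $c(B)\geq 0$ and Lemma~\ref{lem:properties} gives $c(B)\geq 1$ for $\tau(B)<0$.
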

\begin{proof}
    With the purpose of deriving a contradiction, assume that $h^\theta_{f_i}(C') >  J(h_i)$.
     Note that for $C'' = C\setminus ( C \cap \TENS{\theta_1^i}{\theta_2^i}_{f_i})$, we have
    \begin{align*}
        c(C'') \geq -2h_i\sum_{a \in A}|c(a)|u(a).
    \end{align*}
    Using this together with~\eqref{eq:bound_on_cost_overall} in Lemma~\ref{lem:aux_height}, we obtain
    \begin{align} \label{eq:bound_remaining}
        c(C) \geq c(C') + \left( (|V|+1)\left(- 2\sum_{a \in A}\tau(a) -3\right)-2h -2h_i \right) \cdot \sum_{a \in A}|c(a)|u(a).
    \end{align}
    If $\tau(W) = 0$, we can apply Lemma~\ref{lem:no_cycle_crossing_cuts_0} and deduce that $W$ lies in $\dynN_{\phi(f_i)}(s)$ or $\dynN_{\phi(f_i)}(t)$.
    If $\tau(W) < 0$, then we can derive a lower bound on the cost of $C'$ by exploiting the structure of $W$ according to Lemma~\ref{lem:structure_of_connected_component_neg}.
    We can handle the cases $\tau(W) = 0$ and $\tau(W)<0$ and $W$ is of the form (1) together.

    In either case $W$ lies in $\dynN_{\phi(f_i)}(s)$ or $\dynN_{\phi(f_i)}(t)$.
    Thus, $W$ decomposes into a path $P$ with $h_{\phi(f_i)}(P) \leq \sum_{a \in A}\tau(a)$ and cycles $B$ that are $s$-reachable or $t$-reachable by Lemma~\ref{lem:cycles_are_reachable} and hence fulfill $c(B)\geq 0$ by construction.
    Because $h_{\phi(f_i)}(W) > J(h_i)$ by assumption, there is a cycle $B$ in the decomposition of $W$ with $\tau(B)\leq -1$ and thus $c(B) \geq 1$ by Lemma~\ref{lem:properties}.
    As $h_{\phi(f_i)}(P) \leq \sum_{a \in A} \tau(a)$, in the walk $W$ at more than a height of $h_{\phi(f_i)}(W)-\sum_{a\in A}\tau(a)$ has to be traversed by traveling along $B$ multiple times.
    Traveling along a cycle like $B$ once changes the height by at most $\sum_{a \in A} \tau(a)$ while contributing a cost of a least $1$.
     This yields 
    \begin{align*}
        c(C') > \left( (|V|+1)\left(2\sum_{a \in A}\tau(a) +3 \right)+1+2h+2h_i  \right) \cdot \sum_{a \in A}|c(a)|u(a) 
    \end{align*}
    With~\eqref{eq:bound_remaining} we thus get $c(C) > 0$, a contradiction.

    Next, suppose that $\tau(W) < 0$ and $W$ is of the form (2) from Lemma~\ref{lem:structure_of_connected_component_neg}, i.e., $W = W(t) \oplus \overline{W} \oplus{W}(s)$ with $W(t) \subseteq \dynN_{\phi(f_i)}(t)$, $W(s) \subseteq \dynN_{\phi(f_i)}(s)$ and $\overline{W} \subseteq \overline{\dynN}_{\phi(f_i)}$.
    Lemma~\ref{lem:cost_of_component_neg} implies that
    $c(W(t)) \geq -\sum_{a\in A}|c(a)|u(a) $ and 
    \begin{align*}
        c(\overline{W}) \geq \left( - 2\sum_{a \in A}\tau(a) \right) \cdot \sum_{a \in A}|c(a)|u(a).
    \end{align*}
    The walk $W(s)$ decomposes into a path $P(s)$ with $c(P(s))\geq-\sum_{a\in A}|c(a)|u(a)$ and $h_{\phi(f_i)}(P(s)) \leq \sum_{a\in A} \tau(a)$ and cycles. 
    Because of $h_{\phi(f_i)}(W) > J(h_i)$ by assumption, there is at least one cycle $C(s)$ with $\tau(C(s)) \leq -1$ and $c(C(s)) \geq 1$ by Lemma~\ref{lem:properties}.
    With $h_{\phi(f_i)}(W(t) \oplus \overline{W} \oplus P(s))  < 3\sum_{a \in A} \tau(a)$ by Lemma~\ref{lem:structure_of_connected_component_neg} we note that the height changes more than
    \[
        h_{\phi(f_i)}(W) - 3 \sum_{a \in A}\tau(a) > \left(\sum_{a \in A}\tau(a)\right)\left( (|V|+1)\left(2\sum_{a \in A}\tau(a) +3 \right)+1+2h+2h_i  \right) \cdot \sum_{a \in A}|c(a)|u(a)
    \]
    by traveling along $C(s)$ multiple times. 
   Traveling along such a cycle changes the height by at most $\sum_{a \in A}\tau(a)$ and contributes at least a cost of $1$.
    Thus, we get 
   \begin{align*}
   c(W(s)) &> c(P(s)) + \left( (|V|+1)\left(2\sum_{a \in A}\tau(a) +3 \right)+1+2h+2h_i  \right) \cdot \sum_{a \in A}|c(a)|u(a) \\ &>  \left( (|V|+1)\left(2\sum_{a \in A}\tau(a) +3 \right)+2h+2h_i  \right) \cdot \sum_{a \in A}|c(a)|u(a)
   \end{align*}
   and hence $c(C)> 0$, a contradiction.
    The case that $W$ is of the form (3) from Lemma~\ref{lem:structure_of_connected_component_neg} can be handled analogously.
\end{proof} 

\subsection{Proof of Lemma~\ref{lem:Step2_repeated}}
Let $i+1$ be an iteration in which Step 2 is visited and suppose that $f_i$ is repeated during $[\theta_1^i,\theta_2^i]$ with $\theta_2^i-\theta_1^i > 2J(h_i)$ with $h_i = \max\{\theta_1^i,\theta-\theta_2^i\}$.
Let $C$ be a compressed cycle along which we augment in iteration $i+1$.
Note that we can now use Lemma~\ref{lem:height_of_component_pos} and Lemma~\ref{lem:height_of_component_neg} to deduce that $h_{f_i}^\theta(C) \leq J(h_i)$ and thus $f_{i+1}$ is repeated during  $[\theta_1^i+J(h_i),\theta_2^i - J(h_i)]$.
Applying this argument inductively with $f_{j_2}$ as the base case does not directly give a proof of Lemma~\ref{lem:Step2_repeated} as we do not have a bound on the number of iterations that Step 2 requires. 
If Step 2 requires a number of iterations in the order of the given time-horizon $\theta$, applying the argument above inductively does \emph{not} yield that the flow over time computed by Step 2 is repeated.
This is where the extra augmentations in lines~\ref{alg:aug1} and~\ref{alg:aug2} of our construction come into play.
These augmentations ensure that after every iteration $i+1$ that visits Step 2 the resulting flow over time is repeated during $[\theta_1^{j_2}+J(\theta_1^{j_2}),\theta_2^{j_2}+J(\theta_1^{j_2})]$.

There is one additional auxiliary lemma that we need in order to prove Lemma~\ref{lem:Step2_repeated}. 
 To this end, consider the Eulerian Graphs $H_{i+1}$ and $H'_{i+1}$ in iteration $i+1$ in which Step 2 is visited.
Suppose that the Eulerian Graph $H_{i+1}$ decomposes into disjoint cycles $K_{i+1,1},\ldots,K_{i+1,k_{i+1}}$.

\begin{lemma}\label{lem:cost_of_Eulerian_Graph}
    Let $i+1$ be an iteration in which Step 2 is visited and suppose that $f_i$ is repeated during $[\theta_1^i,\theta_2^i]$ with $\theta_2^i-\theta_1^i > 2J(h_i)$ with $h_i = \max\{\theta_1^i,\theta-\theta_2^i\}$.
    We  have $c(K_{i+1,\mu}) < 0$ and $K_{i+1,\mu} \cap \TENS{1}{\theta_1^{j_2}}_{f_{j_2}} \neq \emptyset$ or $K_{i+1,\mu} \cap \TENS{\theta_2^{j_2}}{\theta}_{f_{j_2}} \neq \emptyset$  for all $\mu \in \{1,\ldots, k_{i+1}\}$.
\end{lemma}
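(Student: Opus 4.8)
The plan is to prove both parts simultaneously by induction on the iterations $i\ge j_2$ in which Step~2 is visited, strengthening the statement to the invariant that every cycle $K$ in the decomposition of $H_i$ has $c(K)<0$ and uses an arc of $\TENS{1}{\theta_1^{j_2}}_{f_{j_2}}$ or of $\TENS{\theta_2^{j_2}}{\theta}_{f_{j_2}}$. The induction starts at $H_{j_2}=(\emptyset,\emptyset)$, where there is nothing to prove. In iteration $j_2+1$ the graph $H'_{j_2+1}$ is, as an Eulerian graph, the cycle $C_{j_2}$ itself, so its cycle decomposition is $\{C_{j_2}\}$, no zero-cost cycle is realized, and $K_{j_2+1,1}$ is the compression of $C_{j_2}$. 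Compression does not increase cost, so $c(K_{j_2+1,1})\le c(C_{j_2})<0$; and since $\TENS{\theta_1^{j_2}}{\theta_2^{j_2}}_{f_{j_2}}$ contains no negative-cost cycle by Lemma~\ref{lem1}(b), the cycle $C_{j_2}$, which lies in $(\TENS{1}{\theta_2^{j_2}}_{f_{j_2}})^{u=1}$ or $(\TENS{\theta_1^{j_2}}{\theta}_{f_{j_2}})^{u=1}$, must use an arc outside the repeated window; as compression only removes sub-cycles strictly inside that window, $K_{j_2+1,1}$ keeps this property.

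For the inductive step, write $H'_{i+1}$ as the Eulerian graph obtained from $(V(H_i)\cup V(C_i),\,A(H_i)\,\dot\cup\,A(C_i))$ by cancelling opposite arcs, so $c(H'_{i+1})=c(H_i)+c(C_i)$. The invariant gives $c(H_i)\le 0$ and $c(C_i)<0$, hence $c(H'_{i+1})<0$ and therefore $c(H_{i+1})\le c(H'_{i+1})<0$, since removing the zero-cost cycles $K'_{i+1,j}$ and compressing the rest can only lower the total cost. To upgrade this to $c(K_{i+1,\mu})<0$ for \emph{every} $\mu$, one must rule out a cycle in the decomposition $K'_{i+1,1},\dots,K'_{i+1,\ell}$ of $H'_{i+1}$ that still has positive cost after compression. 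I would argue by exchange: all arcs of $H'_{i+1}$ lie in $\dynN^{u=1}_{f_i}$ (as $H_i\subseteq\dynN^{u=1}_{f_i}$ by construction and $C_i\subseteq\dynN^{u=1}_{f_i}$), so a positive-cost cycle of the decomposition, deleted from $H'_{i+1}$ and recombined with the remaining negative-cost cycles via Lemma~\ref{lem:Eulerian_Subgraph_Decomp}, would yield a compressed negative-cost cycle of strictly smaller cost than $C_i$ in one of the two Step~2 families, contradicting the minimal-cost choice of $C_i$. Controlling which family ($\TENS{1}{\theta_2^i}_{f_i}$ or $\TENS{\theta_1^i}{\theta}_{f_i}$) the recombined cycle falls into uses the invariant on $H_i$ together with Lemma~\ref{lem:properties} and the absence of negative transit-time cycles in $\overline{\dynN}_{\phi(f_i)}$ from Lemma~\ref{Cor:no_neg_transit_time_cycle}.

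It remains to see that each surviving $K_{i+1,\mu}$ still meets $\TENS{1}{\theta_1^{j_2}}_{f_{j_2}}$ or $\TENS{\theta_2^{j_2}}{\theta}_{f_{j_2}}$. Suppose some $K'_{i+1,j}$ with $c(K'_{i+1,j})\neq 0$ used only arcs of $\TENS{\theta_1^{j_2}+1}{\theta_2^{j_2}-1}$. By the inductive hypothesis of Lemma~\ref{lem:Step2_repeated}, no augmentation carried out from iteration $j_2$ onward touches the interior of the repeated window of $f_{j_2}$: the compressed Step~2 cycles have height at most $J(h_\mu)$ and reach into the boundary regions $\TENS{1}{\theta_1^\mu}$ or $\TENS{\theta_2^\mu}{\theta}$, and the realized zero-cost cycles behave likewise. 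Hence such a $K'_{i+1,j}$ would be a nonzero-cost cycle of $\TENS{\theta_1^{j_2}}{\theta_2^{j_2}}_{f_{j_2}}$; it cannot be negative by Lemma~\ref{lem1}(b), and it cannot be positive by the previous paragraph. So no such cycle survives into $H_{i+1}$, and since compression only deletes interior sub-cycles, every $K_{i+1,\mu}$ meets one of the two boundary regions, which closes the induction.

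The step I expect to be the real obstacle is showing that \emph{each} cycle of $H_{i+1}$ is negative: the telescoping bound on the total cost is routine, but the per-cycle statement genuinely needs the minimality of $C_i$, and the exchange must be arranged so that the recombined cycle lands in the same Step~2 family as $C_i$ — which is exactly where the structural facts about $\dynN_{\phi(f_i)}$ in Lemmas~\ref{lem:properties} and~\ref{Cor:no_neg_transit_time_cycle} do the work.
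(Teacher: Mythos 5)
Your overall architecture matches the paper's: induction over the Step~2 iterations, the telescoping identity $c(H'_{i+1})=c(H_i)+c(C_{i+1})$, an exchange argument against the minimality of the chosen cycle to rule out positive-cost cycles in the decomposition of $H'_{i+1}$, and finally Lemma~\ref{lem1}(b) applied to the (negative-cost) cycles of $H_{i+1}$, viewed as residual cycles of $f_{j_2}$, to force each of them to meet $\TENS{1}{\theta_1^{j_2}}_{f_{j_2}}$ or $\TENS{\theta_2^{j_2}}{\theta}_{f_{j_2}}$. However, the one step you yourself flag as the obstacle is exactly where your argument does not go through as written. Deleting a positive-cost cycle $K'_{i+1,j}$ from $H'_{i+1}$ and re-decomposing the remainder via Lemma~\ref{lem:Eulerian_Subgraph_Decomp} only controls the \emph{total} cost $c(H_i)+c(C_{i+1})-c(K'_{i+1,j})$ of the surviving cycles; it does not produce any \emph{single} cycle of cost below $c(C_{i+1})$, so no contradiction with the minimal-cost choice of $C_{i+1}$ is obtained. (The remainder still contains all of $H_i$, whose cost may already be far more negative than $c(C_{i+1})$, and it may split into many cycles each individually more expensive than $C_{i+1}$.)

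The exchange that works runs in the opposite direction. First observe that a cycle $K'_{i+1,j}$ with $c(K'_{i+1,j})>0$ must use arcs contributed by $C_{i+1}$ — otherwise it would appear in a decomposition of $H_i$ and be strictly negative by the induction hypothesis. Then take the \emph{reversal} $K''_{i+1,j}$ of $K'_{i+1,j}$, which has cost $-c(K'_{i+1,j})<0$, and cancel opposite arcs in $K''_{i+1,j}\,\dot\cup\,C_{i+1}$: the arcs of $C_{i+1}$ shared with $K'_{i+1,j}$ cancel, and the reversed $H_i$-arcs of $K'_{i+1,j}$ are residual arcs of $f_i$ because $f_i$ already carries the flow sent along $H_i$. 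The result is a cycle $B$ in $\TEN{\theta}_{f_i}$ with $c(B)=c(C_{i+1})-c(K'_{i+1,j})<c(C_{i+1})$, contradicting minimality — this is the single cheaper cycle your version never manufactures. (One still has to check $B$ lands in one of the two Step~2 families, a point the paper itself treats lightly, but at least the cost comparison is now against $C_{i+1}$ alone.) A smaller remark: for the boundary-intersection claim you route through the inductive hypothesis of Lemma~\ref{lem:Step2_repeated}, whose proof in turn uses the present lemma; this can be repaired by a joint induction, but it is cleaner to argue as the paper does, directly from the fact that each $K_{i+1,\mu}$ is a strictly negative cycle of $\TEN{\theta}_{f_{j_2}}$ and hence cannot lie inside $\TENS{\theta_1^{j_2}}{\theta_2^{j_2}}_{f_{j_2}}$ by Lemma~\ref{lem1}(b).
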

\begin{proof}
    We give a proof by induction.
    Iteration $j_2+1$ is the first iteration in which Step 2 is visited.
    In this iteration, we augment along the compressed cycle $C_{j_2+1}$ with $c(C_{j_2+1})<0$.
    Thus, $C$ does not lie in $\TENS{\theta_{1}^{j_2}}{\theta_2^{j_2}}_{j_{j_2}}$ by Lemma~\ref{lem1}.
    %
    %
        %
    Note that we have $k_{j_2+1} = 1$.
    Thus, $H_{j_2+1}$ only consists of the cycle $C_{j_2+1}$ and hence the assertion of the lemma is true for $i = j_2+1$.

    Next, let $i+1>j_2+1$.
    In this iteration we augment along the compressed cycle $C_{i+1}$ with $c(C_{i+1}) < 0$.
    If $C_{i+1}$ is not induced by any of the cycles along which we augmented in previous iterations where Step 2 was visited, then $C_{i+1}$ is a cycle in $\TEN{\theta}_{f_{j_2}}$ and the Eulerian graph $H_{i+1}$ decomposes into $H_{i}$ and $C_{i+1}$.
    Thus, by induction, $H_{i+1}$ decomposes into cycles with strictly negative cost.
    Since $c(C^{i+1}) < 0$, the cycle $C^{i+1}$ does not lie in $\TENS{\theta_1^{j_2}}{\theta_2^{j_2}}_{f_{j_2}}$ by Lemma~\ref{lem1}.
    Thus, $C_{i+1}$ intersects $\TENS{1}{\theta_1^{j_2}}_{f_{j_2}}$ or  $\TENS{\theta_2^{j_2}}{\theta}_{f_{j_2}}$.
    Overall, the assertion of the lemma is true in this case.

    Next, suppose that $C^{i+1}$ is induced by cycles along which we augmented in previous iterations where Step 2 was visited.
    We consider the Eulerian graph $H_{i+1}'$ which is induced by $H_{i}$ and $C_{i+1}$ by removing opposite arcs.
    Arcs that occur in both $H_{i}$ and $C_{i+1}$ are counted multiple times.
    The Eulerian graph $H_{i+1}'$  decomposes into disjoint cycles $K'_{i+1,1},\ldots, K'_{i+1,\ell_{i+1}}$.
    We prove that $c(K'_{i+1,\mu}) \leq 0$ for all $\mu \in \{1,\ldots,\ell_{i+1}\}$.
    At first we consider the case $\ell_{i+1} = 1$. 
    In this case, we have, using that $c(H_{i}) < 0$ by induction,
    \begin{align*}
        c(H'_{i+1}) = c(K'_{i+1,1}) = c(H_{i}) + c(C_{i+1}) < 0.
    \end{align*}
    Next, suppose that $\ell_{i+1} >1$ and that $c(K'_{i+1,1}) > 0$.
    The cycle $K'_{i+1,1}$ is induced by $C_{i+1}$ as otherwise $K'_{i+1,1}$ would be contained in a decomposition of $H_{i}$
    and hence fulfill $c(K'_{i+1,1})<0$ by induction.
    Denote by $K''_{i+1,1}$ the cycle $K'_{i+1,1}$ in backward direction.
    Consider the cycle $B$ induced by $K''_{i+1,1}$ and $C_{i+1}$ by removing opposite arcs.
    The cycle $B$ lies in $\TENS{1}{\theta}_{f_{i}}$ and $c(B) = c(C_{i+1}) + c(K''_{i+1,1}) < c(C_{i+1})$, contradicting the minimality of the cost of $C_{i+1}$.
    Thus, we have $c(K'_{i+1,\mu}) \leq 0$ for all $\mu \in \{1,\ldots,\ell_{i+1}\}$.
    However, we augment along all cycles $K'_{i+1,\mu}$ with $c(K'_{i+1,\mu}) = 0$ in lines~\ref{alg:aug1} and~\ref{alg:aug2} of our construction.
    Thus, $H_{i+1}$ decomposes into cycles in $\TEN{\theta}_{f_{j_2}}$ with strictly negative cost.
    Consider $K_{i+1,\mu}$ with $\mu \in \{1,\ldots, k_{i+1}\}$.
    The cycle $K_{i+1,\mu}$ lies in $\TEN{\theta}_{f_{j_2}}$ and $c(K_{i+1,\mu})<0$ by what we argued before.
    This again implies that $K_{i+1,\mu}$ intersects $\TENS{1}{\theta_1^{j_2}}$ or  $\TENS{\theta_2^{j_2}}{\theta}$ by Lemma~\ref{lem1}.
\end{proof}
We are now prepared to give a proof for Lemma~\ref{lem:Step2_repeated}.
\begin{proof}[Proof of Lemma~\ref{lem:Step2_repeated}].
    In order to show that $f_{i+1}$ is repeated during $[\theta_1^{j_2}+J(\theta_1^{j_2}),\theta_2^{j_2}-J(\theta_1^{j_2})]$, we prove the following statement:
    Let $K_{i+1,1},\ldots,K_{i+1,k_{i_1}}$ be the disjoint cycles that the Eulerian Graph $H_{i+1}$ decomposes into.
    Then for all $\mu \in \{1,\ldots, {k_{i+1}}\}$ the cycle $K_{i+1,\mu}$ either lies in $\TENS{1}{\theta_1^{j_2}+J(\theta_1^{j_2})}_{f_{j_2}}$ or in $\TENS{\theta_2^{j_2}-J(\theta_1^{j_2})}{\theta}_{f_{j_2}}$.

    We prove this statement by induction.
    First consider the iteration $j_2+1$ of Construction~\ref{alg}.
    Let $C_{j_2+1}$ be the negative cost cycle along which we augment in this iteration.
    Let $C'_{j_2+1} := C_{j_2+1} \cap \TENS{\theta_1^{j_2}}{\theta_2^{j_2}}_{f_j}$.
    By Lemma~\ref{lem1} the flow over time $f_{j_2}$ is repeated during $[\theta_1^{j_2},\theta_2^{j_2}]$.
    Thus, we can apply Lemma~\ref{lem:height_of_component_pos}, or Lemma~\ref{lem:height_of_component_neg} and deduce that $h^\theta_{f_{j_2}}(C'_{j_2+1}) \leq J(\theta_1^{j_2})$, i.e., $C_{j_2+1}$ lies in $\TENS{1}{\theta_1^j+J(\theta_1^{j_2})}_{f_{j_2}}$ or in $\TENS{\theta_2^j-J(\theta_1^{j_2})}{\theta}_{f_{j_2}}$.
    Furthermore, we have that $k_{j_2+1} = 1$.
    Hence, the Eulerian graph $H_{j_2+1}$ just consists of the cycle $C_{j_2+1}$ and thus the statement above is true in this case.

    Next, let $i+1>j_2+1$. 
    By induction, the flow over time $f_i$ is repeated during $[\theta_1^i,\theta_2^i] \supseteq [\theta_1^{j_2}+J(\theta_1^{j_2}),\theta_2^{j_2}-J(\theta_1^{j_2})]$.
     Let $C_{i+1}$ be the negative cost cycle along which we augment in iteration $i+1$ and define $C_{i+1}' := C_{i+1} \cap \TENS{\theta_1^{i}}{\theta_2^{i}}_{f_{i}}$.
    If $C_{i+1} \cap H_{i} = \emptyset$, then $C_{i+1}$ is a cycle in $\TEN{\theta}_{f_{j_2}}$ with $c(C_{i+1})<0$ which is either contained in $\TENS{1}{\theta_1^{j_2}+J(\theta_1^{j_2})}_{f_j}$ or in $\TENS{\theta_2^{j_2}-J(\theta_1^{j_2})}{\theta}_{f_{j_2}}$ by Lemma~\ref{lem:height_of_component_pos}, or Lemma~\ref{lem:height_of_component_neg} and the Eulerian graph $H_{i+1}$ decomposes into $K_{i,1},\ldots, K_{i,k_{i}}$ and $C_{i+1}$.
    By induction each of these cycles either lies in $\TENS{1}{\theta_1^{j_2}+J(\theta_1^{j_2})}_{f_{j_2}}$ or in $\TENS{\theta_2^{j_2}-J(\theta_1^{j_2})}{\theta}_{f_{j_2}}$.

    Next suppose that $H_{i} \cap C_{i+1} \neq \emptyset$.
    By induction, all of the cycles $K_{i,1},\ldots, K_{i,k_{i}}$ either lie in $\TENS{1}{\theta_1^{j_2}+J(\theta_1^{j_2})}_{f_{j_2}}$ or in $\TENS{\theta_2^{j_2}-J(\theta_1^{j_2})}{\theta}_{f_{j_2}}$.
    Recall that by construction $C_{i+1}$ either lies in $\TENS{1}{\theta_2^{j_2}-J(\theta_1^{j_2})}_{f_{i}}$ or in $\TENS{\theta_1^{j_2}+J(\theta_1^{j_2})}{\theta}_{f_{i}}$.
    We only consider the first case. 
    The argument for the latter case is symmetric.
    We can apply Lemma~\ref{lem:height_of_component_pos}, or Lemma~\ref{lem:height_of_component_neg} and deduce that $C_{i+1}$ lies in $\TENS{1}{\theta_1^{j_2}+J(\theta_1^{j_2})+J(h_i)}_{f_{i}}$.
    Note that $f_i$ is repeated during $[\theta^1_{j_2}+J(\theta_1^{j_2}),\theta^1_{j_2}+J(\theta_1^{j_2})]$ thus we have that $h_i = \theta^1_{j_2}+J(\theta_1^{j_2})$ and hence $h_i$ does not depend on $i$.
    As the time-horizon $\theta$ is large, this yields that $C_{i+1}$ can only interact with a cycle $K_{i,\mu}$ that lies in $\TENS{1}{\theta^{j_2}_{1}+J(\theta_1^{j_2})}_{f_{j_2}}$.
    Thus, the Eulerian graph $H_{i+1}$ decomposes into cycles $K_{i+1,1},\ldots, K_{i+1,k_{i+1}}$ that either lie in $\TENS{1}{\theta_2^{j_2}}_{f_{j_2}}$ or in $\TENS{\theta_1^{j_2}}{\theta}_{f_{j_2}}$.
    Note that $c(K_\mu^{i+1})<0$ for all $\mu \in \{1,\ldots,\ell_{i+1}\}$ by Lemma~\ref{lem:cost_of_Eulerian_Graph}.
    From Lemma~\ref{lem:height_of_component_pos}, or Lemma~\ref{lem:height_of_component_neg} we can now deduce that for $W_{\mu} = K^{i+1}_{\mu} \cap \TENS{\theta_1^{j_2}}{\theta_2^{j_2}}_{f_{j_2}}$ we have $h_{\phi(f_{j_2})}(W_{\mu}) \leq J(\theta_1^{j_2})$. 
    Overall, we have thus deduced that $f_i$ is repeated during $[\theta_1^{j_2}+J(\theta_1^{j_2}),\theta_2^{j_2}-J(\theta_1^{j_2})]$.
\end{proof}
\subsection{Final Steps of the Analysis}
Let $j_3$ be the last iteration in which Step 2 of the construction is visited.
From Lemma~\ref{lem:Step2_repeated} it follows that $f_{j_3}$ is a maximum flow over time that is repeated during $[\theta^{j_3}_1,\theta_2^{j_3}] = [\theta_1^{j_2} + J(\theta_1^{j_2}),\theta-\theta_2^{j_2}-J(\theta_1^{j_2})]$.

It remains to be shown that $f_{j_3}$ is of minimal cost.
By Lemma~\ref{lem1} we have that $\TENS{\theta_1^{j_3}}{\theta_2^{j_3}}_{f_{j_3}}$ does not contain a cycle of negative cost.
By construction, we also eliminated all negative cost cycles $C$ in $\TENS{1}{\theta_2^{j_3}}_{f_{j_3}}$ and  $\TENS{\theta_1^{j_3}}{\theta}_{f_{j_3}}$ in Step 2 of our construction.
To finish the proof of Theorem~\ref{thm:main}, it remains to be checked that $\TEN{\theta}_{f_{j_3}}$ does not contain negative cost cycles crossing all three ``areas'' of the time-expanded network, i.e., negative cost cycles that intersect $\TENS{\theta_1^{j_3}}{\theta_2^{j_3}}_{f_{j_3}}$ \emph{and} $\TENS{1}{\theta_2^{j_3}}_{f_{j_3}}$ \emph{and} $\TENS{\theta_1^{j_3}}{\theta}_{f_{j_3}}$. 
Such negative cost cycles are depicted in Figure~\ref{fig:cycles2}.
\begin{lemma}\label{lem:big_neg_component}
    The network $\TEN{\theta}_{f_{j_3}}$ does not contain a cycle $C$ with $c(C) < 0$ such that $C \cap \TENS{\theta_1^{j_3}}{\theta_2^{j_3}}$ has a repeated component $C'$  that is a path from layer $\theta_2^{j_3}$ to layer $\theta_1^{j_3}$.
\end{lemma}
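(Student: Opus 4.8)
The plan is to argue by contradiction: suppose such a cycle $C$ exists in $\TEN{\theta}_{f_{j_3}}$ with $c(C)<0$ and a repeated component $C'$ that is a path running all the way from layer $\theta_2^{j_3}$ down to layer $\theta_1^{j_3}$. First I would observe that since $C'$ spans the entire repeated interval, its height is $h^\theta_{f_{j_3}}(C') = \theta_2^{j_3}-\theta_1^{j_3}$, which by the bound~\eqref{eq:bound_on_TH} on $\theta$ (and the explicit value of $\theta_1^{j_3},\theta_2^{j_3}$ from the ``Final Steps'' paragraph) is much larger than $J(h_{j_3})$. Now set $W := \pi(C')$. Since $C'$ is a path from layer $\theta_2^{j_3}$ to layer $\theta_1^{j_3}$, it runs \emph{downwards}, so $\tau(W) < 0$; in fact $\tau(W) = -(\theta_2^{j_3}-\theta_1^{j_3}) \leq 0$. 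This puts us exactly in the setting of Lemma~\ref{lem:height_of_component_neg}, whose hypothesis is $\tau(W)\le 0$ and whose conclusion is $h^\theta_{f_{j_3}}(C') \le J(h_{j_3})$.

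The key step is therefore to check that Lemma~\ref{lem:height_of_component_neg} is applicable here, i.e.\ that $C$ is of one of the shapes covered by the analysis of Step 2: $f_{j_3}$ is repeated during $[\theta_1^{j_3},\theta_2^{j_3}]$ by Lemma~\ref{lem:Step2_repeated}, and $C$ intersects $\TENS{1}{\theta_1^{j_3}}_{f_{j_3}}$, $\TENS{\theta_1^{j_3}}{\theta_2^{j_3}}_{f_{j_3}}$ \emph{and} $\TENS{\theta_2^{j_3}}{\theta}_{f_{j_3}}$ — precisely the ``crossing all three areas'' case mentioned just before the statement — so the structural lemmas (Lemma~\ref{lem:structure_of_connected_component_neg}, Lemma~\ref{lem:cost_of_component_neg}, Lemma~\ref{lem:no_cycle_crossing_cuts_0}) apply to the repeated components of $C$, and hence so does Lemma~\ref{lem:height_of_component_neg}. (If $C$ did not cross all three areas, it would be a negative cost cycle in $\TENS{1}{\theta_2^{j_3}}_{f_{j_3}}$ or $\TENS{\theta_1^{j_3}}{\theta}_{f_{j_3}}$, already eliminated by Step 2, so it must cross all three.) Applying Lemma~\ref{lem:height_of_component_neg} to the repeated component $C'$ then yields $h^\theta_{f_{j_3}}(C') \le J(h_{j_3})$. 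But $h^\theta_{f_{j_3}}(C') = \theta_2^{j_3}-\theta_1^{j_3} > 2J(h_{j_3}) > J(h_{j_3})$ by our assumption on $\theta$, a contradiction. Hence no such cycle $C$ exists.

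The main obstacle I anticipate is not the height comparison itself but making sure the earlier height-bounding lemmas genuinely cover a repeated component that is a \emph{single path traversing the whole repeated slab}: one has to confirm that such a $C'$ is indeed classified as a repeated component in the sense of~\eqref{eq:components} (it lies in $\TENS{\theta_1^{j_3}}{\theta_2^{j_3}}_{f_{j_3}}$, starting at layer $\theta_2^{j_3}$ and ending at layer $\theta_1^{j_3}$, both being intersection points with the boundary layers), and that the negative-transit-time structural dichotomy of Lemma~\ref{lem:structure_of_connected_component_neg} and the cost bound of Lemma~\ref{lem:cost_of_component_neg} were stated for exactly this configuration. Once that bookkeeping is in place, the contradiction is immediate from the choice of $\theta$. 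A secondary subtlety is tracking that $h_{j_3} = \max\{\theta_1^{j_3}, \theta-\theta_2^{j_3}\}$ and that $J(h_{j_3})$ is the right bound to invoke — this follows from Lemma~\ref{lem:Step2_repeated}, which gives the precise repeated interval of $f_{j_3}$, together with the monotonicity of $J$.
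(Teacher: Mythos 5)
Your proposal is correct and follows essentially the same route as the paper: both compute that a repeated component spanning the whole slab has height $\theta_2^{j_3}-\theta_1^{j_3} = \theta - 2(\theta_1^{j_2}+J(\theta_1^{j_2})) > 2J(h_{j_3})$ under the assumption on $\theta$, and both derive the contradiction from Lemma~\ref{lem:height_of_component_neg} applied to the projection $W=\pi(C')$, which has $\tau(W)\le 0$. Your additional bookkeeping about why $C$ must cross all three areas and why the height-bounding machinery applies is consistent with (and somewhat more explicit than) the paper's very terse argument.
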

\begin{proof}
    Define $W := \pi(C')$.
    We have 
    $$h_{f_{j_3}}(W) = \theta_2^{j_3} - \theta_1^{j_3} = \theta - 2(\theta_1^{j_2}+J(\theta_1^{j_2}))>2J(\theta_1^{j_2} + h(\theta^{j_2}_1)) = 2J(h_{j_3}),$$
    by our assumptions on $\theta$.
    This contradicts Lemma~\ref{lem:height_of_component_neg}.
   
\end{proof}
\begin{lemma}\label{lem:big_pos_component}
    The network $\TEN{\theta}_{f_{j_3}}$ does not contain a cycle $C$ with $c(C) < 0$ such that $C \cap \TENS{\theta_1^{j_3}}{\theta_2^{j_3}}$ has a repeated component $C'$  that is a path from layer $\theta_1^{j_3}$ to layer $\theta_2^{j_3}$.
\end{lemma}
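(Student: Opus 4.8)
The plan is to argue by contradiction, closely following the proof of Lemma~\ref{lem:big_neg_component} but adding a case distinction forced by the fact that the spanning component now runs \emph{upward}. Suppose such a cycle $C$ exists, let $C'$ be a repeated component of $C$ that is a path from layer $\theta_1^{j_3}$ to layer $\theta_2^{j_3}$, and set $W:=\pi(C')$. A repeated component meets the layers $\theta_1^{j_3},\theta_2^{j_3}$ only at its two endpoints, so its height equals its displacement; exactly as in Lemma~\ref{lem:big_neg_component} this gives, using~\eqref{eq:bound_on_TH},
\[
    h^\theta_{f_{j_3}}(C')=\tau(W)=\theta_2^{j_3}-\theta_1^{j_3}=\theta-2(\theta_1^{j_2}+J(\theta_1^{j_2}))>2J(\theta_1^{j_2}+J(\theta_1^{j_2}))=2J(h_{j_3}).
\]
In particular $\tau(W)>0$, so Lemma~\ref{lem:no_cycle_crossing_cuts_pos} shows that $W$ lies entirely in $\dynN_{\phi(f_{j_3})}(s)$ or in $\dynN_{\phi(f_{j_3})}(t)$, and that $C$ cannot contain both $\hat s$ and $\hat t$ (otherwise the ``more precisely'' part of that lemma would place the nonempty $W$ into the two disjoint node sets of Observation~\ref{obs:disjoint}). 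By Remark~\ref{rem} we may assume $W\subseteq\dynN_{\phi(f_{j_3})}(s)$; this already forces $\hat t\notin C$.

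Next I would trace $C$ from the endpoint of $C'$ (in layer $\theta_2^{j_3}$) back to its start (in layer $\theta_1^{j_3}$). Along this return portion the layer index must drop by $\theta_2^{j_3}-\theta_1^{j_3}$ overall, and since no arc of $\TEN{\theta}_{f_{j_3}}$ changes the layer by more than one except the arcs incident to $\hat s$ or $\hat t$, one of two things must happen. Either the return portion descends through the repeated interval all the way from layer $\theta_2^{j_3}$ to layer $\theta_1^{j_3}$, in which case $C$ has a \emph{second} repeated component $C''$ that is a path from layer $\theta_2^{j_3}$ to layer $\theta_1^{j_3}$; then $\tau(\pi(C''))<0$ and $h^\theta_{f_{j_3}}(C'')=\theta_2^{j_3}-\theta_1^{j_3}>J(h_{j_3})$, contradicting Lemma~\ref{lem:height_of_component_neg} exactly as in Lemma~\ref{lem:big_neg_component}. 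Or the return portion reaches layer $\theta_1^{j_3}$ only after passing through a super terminal, which, since $\hat t\notin C$, must be $\hat s$. This leaves the single case $\hat s\in C$.

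It remains to rule out $\hat s\in C$, and this is the step I expect to be the main obstacle. The idea is to contract $C$ at $\hat s$: the path $\tilde C:=C\setminus\{\hat s\}$ runs from some $s^{\theta'}$ to some $s^{\theta''}$ inside $\TEN{\theta}_{f_{j_3}}$, and since the two arcs of $C$ incident to $\hat s$ have zero cost, $c(\tilde C)=c(C)<0$; projecting, $\pi(\tilde C)$ would be a closed walk based at $s$ whose cycle decomposition must contain an $s$-reachable cycle of strictly negative cost, contradicting the fact that $\dynN_{\phi(f_{j_3})}$ has no $s$- or $t$-reachable negative cost cycle (which follows from the Lifting Lemma~\ref{lem:lifing_lemma} together with the property, recorded in Lemma~\ref{lem1} and reused in the previous subsection, that $\TENS{\theta_1^{j_3}}{\theta_2^{j_3}}_{f_{j_3}}$ contains no negative cost cycle). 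The delicate point is that $\tilde C$ also runs through the two non-repeated blocks, where $\pi$ need not land inside $\dynN_{\phi(f_{j_3})}$; to repair this I would use Lemma~\ref{lem:augmenting_path} and Lemma~\ref{lem:path_towards_cut} together with the maximality of $f_{j_3}$ to argue that the portions of $\tilde C$ in $\TENS{1}{\theta_1^{j_3}}_{f_{j_3}}$ and $\TENS{\theta_2^{j_3}}{\theta}_{f_{j_3}}$ avoid $\lift(\cut(\dynN))$ and can be rerouted, through the path-and-cycle decomposition of the maximum flow $\phi(f_{j_3})$ as in the proof of the Projection Lemma, into the repeated interval, so that the rerouted closed walk lies in $\dynN_{\phi(f_{j_3})}$ and still has strictly negative cost; equivalently, the same rerouting exhibits an augmenting $\hat s$-$\hat t$ path, contradicting that $f_{j_3}$ is a maximum flow over time. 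Making this rerouting precise, and in particular controlling the flow values on the non-repeated blocks, is where the real work lies.
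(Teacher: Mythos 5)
Your reductions at the start are sound and match the paper's: the spanning upward component forces $\tau(W)>0$, Lemma~\ref{lem:no_cycle_crossing_cuts_pos} places $W$ in $\dynN_{\phi(f_{j_3})}(s)$ (or $(t)$), and the sub-case in which the return portion of $C$ descends through the whole repeated interval is killed by Lemma~\ref{lem:big_neg_component} / Lemma~\ref{lem:height_of_component_neg}, leaving only $\hat s\in C$ (resp.\ $\hat t\in C$). But the remaining case is exactly where your argument has a genuine gap, and it is the heart of the lemma. The projection-and-contract idea cannot deliver the contradiction you want, for two reasons. First, the cycle decomposition of $W=\pi(C')$ consists of $s$-reachable cycles, which after Step 1.2 all have cost $\geq 0$; since the height of $C'$ is enormous, the decomposition must contain a cycle $B$ with $\tau(B)>0$ and $c(B)=0$, and such a cycle can be traversed arbitrarily often at zero cost. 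So the repeated component contributes no negative cost at all, and no ``$s$-reachable cycle of strictly negative cost'' appears in the projection of $C'$. The strict negativity of $c(C)$ must therefore live in $C_-$ and $C_+$, i.e.\ in the non-repeated blocks. Second, those blocks do not project into $\dynN_{\phi(f_{j_3})}$: the residual arcs used there exist precisely because $f_{j_3}$ is \emph{not} repeated near $1$ and $\theta$ (e.g.\ backward arcs of flow that has stopped being sent, forward arcs of cut arcs not yet saturated), so $\pi(\tilde C)$ is not a closed walk in the static residual network and its ``decomposition'' is not available. You acknowledge both points and defer them to a rerouting whose cost you do not control --- but controlling that cost \emph{is} the proof.

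The paper resolves this case by a different, more hands-on argument: it observes that neither $C_-$ nor $C_+$ can be shifted into the repeated area (otherwise one would obtain a negative cost cycle in $\TENS{1}{\theta_2^{j_3}}_{f_{j_3}}$, already excluded by Step 2), hence each of $\pi(C_-),\pi(C_+)$ contains a ``blocked'' arc; it then compares the flow-carrying paths $P_+$, $P'$, $P_{\rep}$ through the cut arcs $a_{\cut}$, $a'_{\cut}$, and uses the nonnegativity of the $s$-reachable cycles these paths induce to derive cost identities (e.g.\ $c(W)=c(P_+)$) that finally force $c(C)\geq 0$. If you want to complete your write-up you would have to reproduce this bookkeeping; the shortcut via the Projection Lemma is not available here because the Projection Lemma only applies to cycles lying entirely in $\TENS{\theta_1}{\theta_2}_f$.
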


\begin{proof}
    Aiming for a contradiction, let $C$ be a negative cost cycle in $\TEN{\theta}_{f_{j_3}}$ that contains such a repeated component $C'$.

    Define $C_-$ and $C_+$ such that $C = C_- \oplus C' \oplus C_+$.
    By Lemma~\ref{lem:big_neg_component}, $C$ contains $\hat{s}$ or $\hat{t}$.
    Suppose that $C$ contains $\hat{s}$.
    The other case can be shown by a symmetric argument.
    Lemma~\ref{lem:no_cycle_crossing_cuts_pos} implies that $W$ lies completely in $\dynN_{\phi(f_i)}(s)$.
    In particular, all cycles from the decomposition of $W$ are $s$-reachable by Lemma~\ref{lem:properties} and hence have positive cost by construction.
    Since $h^\theta_{f_i}(C') = \theta_2^{j_3}-\theta_1^{j_3}>2h(j_3)$, there is at least one cycle $B$ in the decomposition of $W$ with $\tau(B) > 0$ and $c(B) \leq 0$.
    If $c(B)<0$, then we can derive a contradiction analogously to the proof of Lemma~\ref{lem:height_of_component_neg}.
    Note that neither $C_-$ nor $C_+$ can be ``shifted'' into the repeated area of $\TEN{\theta}_f$.
    Suppose we can shift $C_+$ into the repeated area of the time-expanded network.
    After potentially removing cycles with strictly positive transit time from $C'$ we can then create a cycle $\tilde{C}$ of the form 
     $c(\tilde{C}) \leq c(C) < 0$ that lies in $\TENS{1}{\theta_2^{j_3}}_{f_{j_3}}$.
    However, such a cycle does not exist after Step 2 of the construction.

    Thus, $\pi(C_-)$ contains an arc $a_-$ that is ``blocked'' in $\dynN_{\phi(f)}$.
    Similarly, there is an arc $a_+$ in $\pi(C_+)$ that is also ``blocked'' in $\dynN_{\phi(f_i)}$.
    Here ``blocking'' $a_+$ means to block an $\hat{s}$-$\hat{t}$ path along which $f_{j_3}$ can send flow via $\cev{a}_+$.
    We distinguish whether $a_+$ and $a_-$ are forward or backward arcs in $\dynN$.
    Without loss of generality suppose that $a_+$ is a backward arc and $a_-$ is a forward arc.
    Then, in $\phi(f_{j_3})$ flow is sent along a path $P_+$ that  blocks $a_+$.
    Suppose that $P_+$ crosses $\cut(\dynN)$ at some arc $a_{\cut}$.
    Eventually, $f_{j_3}$ stops to send flow along liftings of $P_+$ so that in $\TENS{\theta_2}{\theta}$ liftings of the arc $a_+$ can be utilized.
    Note that $C_+$ ends in $\hat{s}$, i.e., $C_+$ cancels out flow, that is send out of $\hat{s}$ via some walk $P'$ in $\dynN$ towards some arc $a_{\cut}' \in \cut(\dynN)$.
    We argue that $a_{\cut} \neq a'_{\cut}$.
    Aiming for a contradiction, suppose that $a_{\cut}' = a_{\cut}$.
    See Figure~\ref{fig:basecase1} for an illustration of the situation in $\dynN$.   
    \begin{figure}
        \centering
        \begin{subfigure}{0.4\textwidth}
            \includegraphics{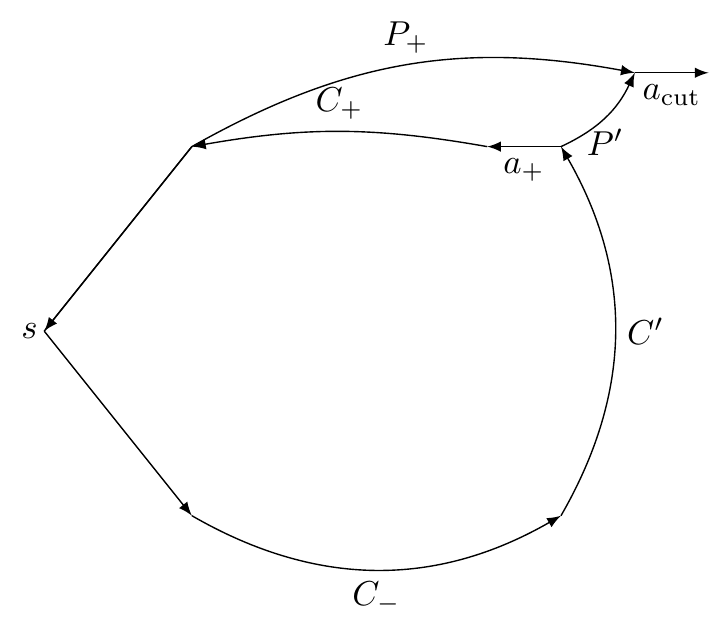} 
            \caption{The situation if we assume $a_{\cut} = a_{\cut}'$}
            \label{fig:basecase1}    
        \end{subfigure}
        \hfill
        \begin{subfigure}{0.4\textwidth}
        \centering
        \includegraphics{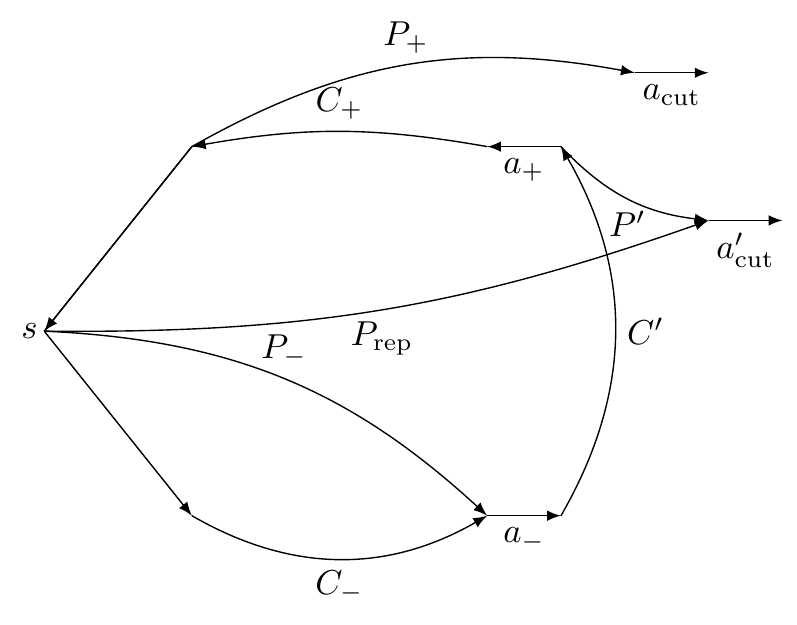} 
        \caption{The situation if we assume $a_{\cut} \neq a_{\cut}'$}
        \label{fig:basecase2}              
    \end{subfigure}  
    \caption{The situations under the assumption $a_{\cut} = a_{\cut}'$ or $a_{\cut} \neq a_{\cut}'$ }
    \end{figure}           
    In the repeated part of the flow over time $f_{j_3}$, flow is sent along $a_{\cut}$ via $P_+$ which blocks the walk $P'$ along which flow is sent along $a_{\cut}$ in the non-repeated area.
    Overall, this yields that $\cev{P}_+$ and $P'$ induce a cycle $B_1$ in $\dynN_{\phi(f)}$ with $c(B_1) = c(W)-c(P_+) \geq 0$ as $B_1$ is $s$-reachable by construction.
    Similarly, we can construct a cycle $B_2$ in $\TENS{\theta_2}{\theta}$ by following along a lifting of $P_+$ and then traversing backward along a lifting of $P'$.
    This implies $c(B_2) = c(P_+)-c(W) \geq 0$ and thus $c(W) = c(P_+)$. 

    By suitably following along $C$, i.e., walking along $B$ a suitable number of times, then following along a lifting of $P'$ towards a lifting of $a_{\cut}$ (denote the obtained path by $P''$) and then going backward along a lifting of $P^+$ towards $\hat{s}$, we also obtain a cycle with positive cost in $\TENS{1}{\theta_2^i}_{f_{j_3}}$, i.e. with $c(\pi(P'')) - c(P_+)\geq 0$.
    We can obtain $W$ from $P''$ by traversing along $B$ more often and then following a lifting of $P'$ back to $\hat{s}$ from $\tail(a_+)$.
    Since $c(W) < 0$, this yields $c(P'')-c(P') < 0$.
    Overall, we thus get $c(P')-c(P_+) > 0$, a contradiction.
    Thus, we have $a_{\cut} \neq a_{\cut} '$.
    By assumption there is also a path $P_-$ that ``blocks'' $a_-$ in $\dynN_{\phi(f_{j_3})}$
    Denote by $P_{\rep}$ a path along which $\phi(f_{j_3})$ sends flow through $a'_{\cut}$.
    See Figure~\ref{fig:basecase2} for an illustration of this situation.

    Consider the cycle $B'$ induced by following  $P''$ and then going backward along $P_{\rep}$.
    We again get $c(B') = c(\pi(P'')) - c(P_{\rep}) \geq 0$.
    Also note that $P'$ starts to send flow along $a_{\cut'}$ later than $P_{\rep}$.
    This implies $c(P') \leq c(P_{rep})$
    Thus,
    $c(C) = c(\pi(P'')) - c(P') \geq 0$, a contradiction.

\end{proof}

\newpage
\printbibliography
\newpage
\appendix
\section{Compression procedure}\label{SecA:Compression}
The compression procedure we describe in the following is used in line 28 of our construction.
Let $i+1$ be an iteration of the construction in which Step 2 is visited and let $C$ be a negative cost cycle in $\TEN{\theta}_{f_i}$.

The compression procedure only has an effect on repeated components of $C$ with positive transit time that do not run from layer $\theta_1^i$ to $\theta_2^i$.
We start by describing the compression procedure applied to a repeated component $C'$ of $C$ that ends in layer $\theta_2^i$ and starts in a layer $\theta'$ with $\theta' < \theta_2^i$.
See Figure~\ref{Fig:Compression1} for a visualization.
\begin{figure}[h!]
    \centering
    \includegraphics{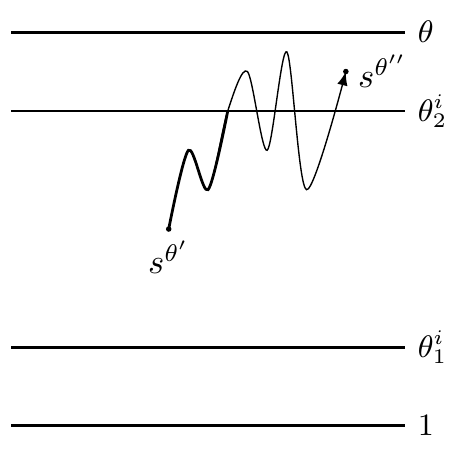}
    \caption{Example for a cycle $C$ in $(\TENS{\theta_1^i}{\theta})^{u=1}$. The compression procedure only has an effect of $C'$ which is indicated by the thick part of $C$ in the figure. }
    \label{Fig:Compression1}
\end{figure}

Define $W := \pi(C')$.
Note that $W$ is a walk in $\dynN_{\phi(f_i)}$ which decomposes into a path $P$ and cycles $B_1,\ldots,B_{\ell}$. 
If $C'$ starts in $\hat{s}$, then $W$ lies completely in $\dynN_{\phi(f_i)}(s)$ by Lemma~\ref{lem:no_cycle_crossing_cuts_pos} while $W$ lies in $\dynN_{\phi(f_i)}(t)$ if $C'$ starts in $\hat{t}$.
In either case, we get that the cycles $B_1,\ldots,B_{\ell}$ are $s$- or $t$-reachable by Lemma~\ref{lem:cycles_are_reachable} and thus, we have $c(B_j) \geq 0$ for all $j \in \{1,\ldots,\ell\}$ by construction, and $c(B_j)>0$ if $\tau(B_j)<0$ for $j \in \{1,\ldots,\ell\}$ by Lemma~\ref{lem:properties}.
In particular, we have $c(P) \leq c(W)$.
The aim of the compression procedure is to construct a new walk $W'$ from $W$ with $c(W') \leq c(W)$ such that we can bound the height of $W'$.
Let $P = (v_1,a_1,v_2,\ldots, a_{k-1},v_k)$.
If $h_{\phi(f_i)}(v_k,P) \geq h_{\phi(f_i)}(v_j,P) $ for all $j \in \{1,\ldots,k-1\}$, then we can lift $P$ back to a path $W'$ in $\TENS{\theta_1^i}{\theta_2^i}_{f_{i}}$ that starts in a layer $\theta' \leq \theta_2^i$ and ends in layer $\theta_2^i$.
Since in this case $W'$ is a path in $\TENS{\theta_1^i}{\theta_2^i}_{f_{i}}$, we get that $h_{f_{i}}^\theta(W') < \sum_{a \in A} \tau(a)$.
See Figure~\ref{Fig:Compression3} for a visualization of this case.
\begin{figure}[h!]
    \centering
    \includegraphics{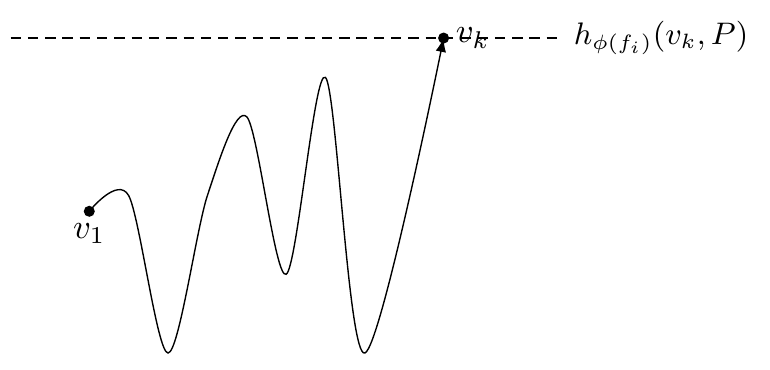}
    \caption{An illustration of a path $P = (v_1,a_1,v_2,\ldots, a_{k-1},v_k)$, where the node $v_k$ has the highest height of all the nodes in $P$.}
    \label{Fig:Compression3}
\end{figure}
In this case the compression procedure returns a lifting of $P$ as a suitable compression of $W$.
Next suppose that there is some $j \in \{1,\ldots,k-1\}$ such that $h_{\phi(f_i)}(v_k,P) < h_{\phi(f_i)}(v_j,P) $. 
See Figure~\ref{Fig:Compression2} for a visualization of this case.
\begin{figure}[h!]
    \centering
    \includegraphics{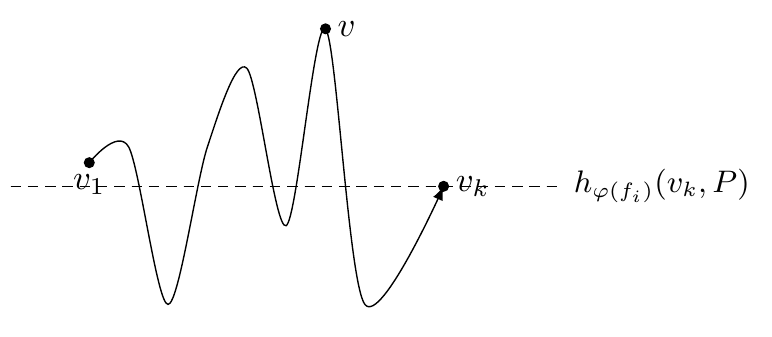}
    \caption{An illustration of a path $P = (v_1,a_1,v_2,\ldots, a_{k-1},v_k)$, where the node $v_k$ does not have the highest height of all the nodes in $P$. }
    \label{Fig:Compression2}
\end{figure}

Then we cannot just lift $P$ back to the time-expanded network in a suitable way.
In $P$ there might be multiple tops of hills that have a higher height than $v_k$.
In Figure~\ref{Fig:Compression2} we have three such hills.
In $W$ there are cycles with negative or positive transit time occurring before $v_k$ or at $v_k$ reducing or increasing the height of every node in $P$ occurring after these cycles.
In particular the height of $v_k$ is affected by such cycles. 
Thus, in order to achieve that the endpoint of $W$ has the highest height among all nodes in $W$, $W$ needs to traverse at least one cycle of positive transit time before or at the node $v_k$ and after the last hill of $P$, i.e., between the node $v$ and $v_k$ in Figure~\ref{Fig:Compression2}.
To obtain the walk $W'$, such that a lifting of $W'$ is returned by our compression procedure  we now enter such cycles into $P$ in such a that $v_k$ has the lowest height of all the points in $W'$ and such that $c(W') \leq c(W)$. 
This is possible by the arguments above.
Since the height of $P$ is at most $\sum_{a \in A} \tau(a)$, the height of $W$ is at most $2 \sum_{a \in A} \tau(a)$.

Next we describe our compression procedure when $C'$ is a repeated component of $C$ that 
starts in layer $\theta_1^i$ and ends in $\hat{s}$ or $\hat{t}$ in layer $\theta'$ with $\theta' > \theta_1^i$.
Define $W := \pi(C')$.
Note that $W$ is a walk in $\dynN_{\phi(f_i)}$ which decomposes into a path $P$ and cycles $B_1,\ldots,B_{\ell}$. 
If $C'$ ends in $\hat{s}$, then $W$ lies completely in $\dynN_{\phi(f_i)}(s)$ by Lemma~\ref{lem:no_cycle_crossing_cuts_pos} while $W$ lies in $\dynN_{\phi(f_i)}(t)$ if $C'$ ends in $\hat{t}$.
In either case, we get that the cycles $B_1,\ldots,B_{\ell}$ are $s$- or $t$-reachable by Lemma~\ref{lem:cycles_are_reachable} and thus, we have $c(B_j) \geq 0$ for all $j \in \{1,\ldots,\ell\}$ by construction, and $c(B_j)>0$ if $\tau(B_j)<0$ for $j \in \{1,\ldots,\ell\}$ by Lemma~\ref{lem:properties}.
In particular, we have $c(P) \leq c(W)$.

The aim of the compression procedure is to construct a new walk $W'$ from $W$ with $c(W') \leq c(W)$ such that we can bound the height of $W'$.
Let $P = (v_1,a_1,v_2,\ldots, a_{k-1},v_k)$.
If $h_{\phi(f_i)}(v_1,P) \leq h_{\phi(f_i)}(v_j,P) $ for all $j \in \{2,\ldots,k\}$, then we can lift $P$ back to a path $W'$ in $\TENS{\theta_1^i}{\theta_2^i}_{f_{i}}$ that starts in a layer $\theta_1^i$  and ends in layer $\theta' \leq \theta_2^i$.
Since in this case $W'$ is a path in $\TENS{\theta_1^i}{\theta_2^i}_{f_{i}}$, we get that $h_{f_{i}}^\theta(W') < \sum_{a \in A} \tau(a)$.

Next suppose that there is some $j \in \{2,\ldots,k\}$ such that $h_{\phi(f_i)}(v_1,P) > h_{\phi(f_i)}(v_j,P) $. 
See Figure~\ref{Fig:Compression4} for a visualization of this case.
\begin{figure}[h!]
    \centering
    \includegraphics{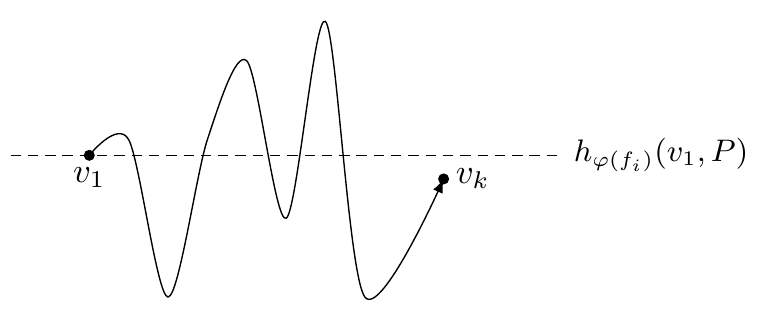}
    \caption{An illustration of a path $P = (v_1,a_1,v_2,\ldots, a_{k-1},v_k)$, where the node $v_1$ does not have the lowest height of all the nodes in $P$. }
    \label{Fig:Compression4}
\end{figure}
In this case we cannot just lift $P$ back to the time-expanded network in a suitable way.
There might be multiple bottoms of valleys in $P$ which have a lower height in $P$ then $v_1$.
In order to ``pull up'' these valleys in $W$, $W$ needs to traverse cycles $B$ of positive transit time. 
Recall that such cycles have cost $c(B)\leq 0$.
Denote by $W'$ the walk obtained by entering cycles with positive transit time from the decomposition of $W$ into $P$ such that in $W'$ the node $v_1$ has the lowest height among all nodes in $W'$ in a minimum cost way.
Note that the height of $W'$ is at most $\mu \cdot \sum_{a \in A} \tau(a)$ where $\mu$ is the number of valleys of $P$.
Since $P$ is a path, there can be at most $|V|$ such valleys.
The walk $W$ might additionally contain cycles of strictly negative transit time which have strictly positive cost. 
Thus, we have $c(W') \leq c(W)$.
Overall, the following lemma summarizes the results of our compression procedure.
\begin{lemma}\label{lem:compression}
    Let $C$ be a negative cost cycle in $\TEN{\theta}_{f_i}$.
    The compression procedure only has an effect on repeated components of $C$ with positive transit time that do not run from layer $\theta_1^i$ to $\theta_2^i$.
    If $C'$ is a repeated component of $C$ that ends in layer $\theta_2^i$ and starts in a layer $\theta'$ with $\theta' < \theta_2^i$, the compression procedure returns a lifting of $W'$ with $h_{\phi(f_i)}(W') \leq 2\sum_{a \in A}\tau(A)$.

   If $C'$ is a repeated component of $C$ that  
starts in layer $\theta_1^i$ and ends in $\hat{s}$ or $\hat{t}$ in layer $\theta'$ with $\theta' > \theta_1^i$, the compression procedure returns a lifting of $W'$ with $h_{\phi(f_i)}(W') \leq |V|\sum_{a \in A}\tau(A)$.
\end{lemma}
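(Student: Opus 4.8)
The plan is to verify Lemma~\ref{lem:compression} by formalizing the informal discussion that precedes it, splitting into the two stated cases and in each case tracking (i) which repeated components are actually modified, (ii) that the cost does not increase, and (iii) that the height of the resulting walk $W'$ is bounded by the claimed quantity. First I would observe that for a repeated component $C'$ of $C$ with $\tau(\pi(C')) \le 0$, a lifting of $\pi(C')$ to $\TENS{\theta_1^i}{\theta_2^i}_{f_i}$ already has bounded height by the structural lemmas (Lemma~\ref{lem:structure_of_connected_component_neg} and Lemma~\ref{Cor:no_neg_transit_time_cycle}), and for a repeated component running all the way from layer $\theta_1^i$ to $\theta_2^i$ there is nothing to compress; this justifies the first sentence of the lemma. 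So the content is entirely about repeated components $C'$ with $\tau(\pi(C')) > 0$ that end in layer $\theta_2^i$ (respectively start in layer $\theta_1^i$) and do not span the whole repeated window.

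For the first case, I would set $W := \pi(C')$ and use Lemma~\ref{lem:no_cycle_crossing_cuts_pos} to conclude $W \subseteq \dynN_{\phi(f_i)}(s)$ or $W \subseteq \dynN_{\phi(f_i)}(t)$ depending on whether $C'$ attaches to $\hat{s}$ or $\hat{t}$; then Lemma~\ref{lem:cycles_are_reachable} gives that every cycle $B_j$ in the cycle-path decomposition of $W$ is $s$- or $t$-reachable, hence $c(B_j) \ge 0$ (and $c(B_j) > 0$ if $\tau(B_j) < 0$, by Lemma~\ref{lem:properties} part (b)), so the underlying path $P$ satisfies $c(P) \le c(W)$. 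If the endpoint of $P$ already has maximal height along $P$, I lift $P$ directly and get $h^\theta_{f_i}(W') < \sum_{a\in A}\tau(a) \le 2\sum_{a\in A}\tau(a)$. Otherwise I reinsert into $P$, just before its endpoint and after the last "hill", enough of the positive-transit-time cycles $B_j$ so that the endpoint becomes the unique lowest point of the resulting walk $W'$; this is possible precisely because $W$ itself must have traversed such positive-transit-time cycles after the last hill (otherwise the endpoint of $W$ would not be reachable with $\tau(W) > 0$ while also lying at the bottom relative to the hill), and dropping the remaining (nonnegative-cost) cycles only decreases cost, giving $c(W') \le c(W)$. Since $h_{\phi(f_i)}(P) \le \sum_{a\in A}\tau(a)$ and the inserted cycles are placed so that $W'$ descends from the level of the top hill to the bottom endpoint and back up, the total height of $W'$ is at most $2\sum_{a\in A}\tau(a)$.

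For the second case ($C'$ starting in layer $\theta_1^i$, ending in $\hat{s}$ or $\hat{t}$), the argument is the mirror image with the roles of start and endpoint of $P$ swapped: if $v_1$ is already the lowest point of $P$, lift directly with height $< \sum_{a\in A}\tau(a) \le |V|\sum_{a\in A}\tau(a)$; otherwise reinsert positive-transit-time cycles (which have cost $\le 0$) to "pull up" each valley of $P$ so that $v_1$ becomes the lowest node of $W'$. The key counting point here is that $P$, being a path in $\dynN$, visits at most $|V|$ vertices and hence has at most $|V|$ valleys; each valley contributes height at most $\sum_{a\in A}\tau(a)$ after being pulled up, so $h_{\phi(f_i)}(W') \le |V|\sum_{a\in A}\tau(a)$. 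Cost again does not increase because the inserted cycles have nonpositive cost and the discarded cycles (including any negative-transit-time cycles of strictly positive cost) have nonnegative cost.

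The main obstacle I expect is making the "reinsert cycles into $P$ in a minimum-cost way so that the desired endpoint becomes the extreme point" step precise: one has to argue that the multiset of positive-transit-time cycles available in the decomposition of $W$ is sufficient to adjust the height profile as required, that they can be inserted at the right point of $P$ while keeping $W'$ a legitimate closed-up component (still liftable to $\TENS{\theta_1^i}{\theta_2^i}_{f_i}$ via Observation~\ref{obs:layers_are_the_same}), and that after the compression the lifting still connects correctly to the rest of $C$ (to $\hat{s}$/$\hat{t}$ and to the neighbouring non-repeated components). These are exactly the points the surrounding prose asserts ("This is possible by the arguments above"), so the proof of Lemma~\ref{lem:compression} is mostly a matter of carefully invoking Lemma~\ref{lem:no_cycle_crossing_cuts_pos}, Lemma~\ref{lem:cycles_are_reachable}, Lemma~\ref{lem:properties}, and Observation~\ref{obs:layers_are_the_same}, and bookkeeping the heights; I do not anticipate any genuinely new difficulty beyond keeping the case analysis and the height accounting straight.
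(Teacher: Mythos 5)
Your proposal is correct and follows essentially the same route as the paper: the paper's "proof" of Lemma~\ref{lem:compression} is precisely the preceding discussion in Appendix~\ref{SecA:Compression}, which decomposes $W=\pi(C')$ into a path $P$ and cycles, invokes Lemma~\ref{lem:no_cycle_crossing_cuts_pos}, Lemma~\ref{lem:cycles_are_reachable} and Lemma~\ref{lem:properties} for the cost signs, and then reinserts positive-transit-time cycles to make the relevant endpoint of $P$ extremal, yielding the same $2\sum_{a\in A}\tau(a)$ and $|V|\sum_{a\in A}\tau(a)$ bounds. The one step you flag as needing care (that the available positive-transit-time cycles suffice and can be reinserted while keeping the lifting valid and connected to the rest of $C$) is left at exactly the same level of informality in the paper, so your reconstruction matches it in both substance and level of rigor.
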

\end{document}